\newcommand{\be}{\begin{equation}}
\newcommand{\ee}{\end{equation}}
\newcommand{\beq}{\begin{eqnarray}}
\newcommand{\eeq}{\end{eqnarray}}
\newtheorem{thm}{Theorem}[section]
\newtheorem{lma}{Lemma}[section]
\newtheorem{prop}{Proposition}[section]
\newtheorem{defn}{Definition}[section]
\newtheorem{claim}{Claim}[section]
\theoremstyle{remark}
\newtheorem{rem}{Remark}[section]
\numberwithin{equation}{section}
\DeclareMathOperator{\dist}{dist}
\DeclareMathOperator{\sign}{sgn}
\DeclareMathOperator{\Int}{Int}
\def\be{\begin{equation}}
\def\ee{\end{equation}}
\def\bee{\begin{equation*}}
\def\eee{\end{equation*}}
\newcommand{\de}{\partial}
\newcommand{\Ric}{\mathrm{Ric}}
\newcommand{\vp}{\varphi}
\newcommand{\e}{\epsilon}
\newcommand{\Vol}{\mathrm{Vol}}
\def\Ric{\text{\rm Ric}}
\def\Rm{\text{\rm Rm}}
\def\a{{\alpha}}
\begin{document}

\title{singular positive mass theorem with arbitrary ends}

\author[J. Chu]{Jianchun Chu$^1$}
\address[Jianchun Chu]{School of Mathematical Sciences, Peking University, Yiheyuan Road 5, Beijing, P.R.China, 100871}
\email{jianchunchu@math.pku.edu.cn}

\author[M.-C. Lee]{Man-Chun Lee$^2$}
\address[Man-Chun Lee]{Department of Mathematics, The Chinese University of Hong Kong, Shatin, N.T., Hong Kong}
\email{mclee@math.cuhk.edu.hk}

\author[J. Zhu]{Jintian Zhu$^3$}
\address[Jintian Zhu]{Beijing International Center for Mathematical Research, Peking University, Beijing, 100871, P. R. China}
\email{zhujt@pku.edu.cn, zhujintian@bicmr.pku.edu.cn}

\thanks{$^1$Research partially supported by Fundamental Research Funds for the Central Universities (No. 7100603624).}

\thanks{$^2$Research partially supported by Hong Kong RGC grant (Early Career Scheme) of Hong Kong No. 24304222 and a direct grant of CUHK}

\thanks{$^3$Research partially supported by China postdoctoral science foundation (grant BX2021013).}

\date{\today}

\begin{abstract}
Motivated by the recent progress on positive mass theorem for asymptotically flat manifolds with arbitrary ends and the Gromov's definition of scalar curvature lower bound for continuous metrics, we start a program on the positive mass theorem for asymptotically flat manifolds with $C^0$ arbitrary ends. In this work as the first step, we establish the positive mass theorem of asymptotically flat manifolds with $C^0$ arbitrary ends when the metric is $W^{1,p}_{\mathrm{loc}}$ for some $p\in(n,\infty]$ and is smooth away from a {\it non-compact} closed subset with Hausdorff dimension $n-\frac{p}{p-1}$. New techniques are developed to deal with non-compactness of the singular set.
\end{abstract}

\keywords{Positive mass theorem, Singularity, Arbitrary end}

\maketitle

\section{Introduction}

One of the most fundamental questions in scalar curvature geometry is to ask to what extend the Euclidean space is rigid under nonnegative scalar curvature condition. In \cite{SchoenYau1979C,SchoenYau1981,SchoenYau2017}, Schoen-Yau proved the famous positive mass theorem which says that the Arnowitt-Deser-Misner (ADM) mass of each end of an $n$-dimensional asymptotically flat manifold with nonnegative scalar curvature is nonnegative. Moreover, if the mass of an end vanishes, then the manifold must be isometric to the standard Euclidean space. If the manifold is assumed to be spin, it was also proved by Witten \cite{Witten1981} using spinor method.

Recently, there has been interest in generalizing the positive mass theorem to situation when some of its other ends is only complete but far from being Euclidean. We call these manifolds to be asymptotically flat manifolds with arbitrary ends. In this direction,  it was recently proved by Lesourd-Unger-Yau \cite{LesourdUngerYau2021} that the positive mass theorem for asymptotically Schwarzschild manifolds with arbitrary ends still holds in dimension up to seven.  It was later discovered by the third named author \cite{Zhu2022A} and Lee-Lesourd-Unger \cite{LeeLesourdUnger2022} independently that the approach of Lesourd-Unger-Yau \cite{LesourdUngerYau2021} is indeed sufficient to deal with the general asymptotically flat end by establishing a kind of density theorem.

On the other hand, metrics with low-regularity arise naturally in the compactness theory and in the study of Brown-York quasi-local mass. The positive mass theorem with non-smooth metrics was first studied by Miao \cite{Miao2003} when the singularity is a hypersurface satisfying certain conditions on mean curvature. It was then used by Shi-Tam \cite{ShiTamJDG} to establish the positivity of Brown-York quasi-local mass. Since then, there has been many works in generalizing the result of Miao, see \cite{JiangShengZhang2020,Lee2013,LeeLeFloch2015,LeeTam2021,LiMantoulidis2019,McFeronSzekelyhidi2012,ShiTamPJM} and the references therein. Particularly in \cite{JiangShengZhang2020}, Jiang-Sheng-Zhang considered the situation when the metric is $W^{1,p}_{\mathrm{loc}}$ for $n\leq p\leq \infty$ across a compact singularity set with certain assumptions on its Hausdorff measure, see also \cite{ShiTamPJM} for the earlier works in this direction. At the same time, it is also interesting to compare it with Gromov's program on defining the scalar curvature lower bound for continuous metric based on comparison theorems in scalar curvature geometry. We refer interested readers to Gromov's lecture note \cite{GromovLecture} on scalar curvature for a comprehensive overview.

Motivated by these, we study the positive mass theorem of asymptotically flat manifolds with non-smooth arbitrary ends. To clarify the notations, we begin with the definitions. We point out that all manifolds are assumed to be orientable in the rest of this paper.
\begin{defn}
Let $M^n$ be a non-compact manifold without boundary and $g$ be a complete $C^0$ metric on $M$. A distinguished end $\mathcal{E}$ of $M$ is said to be asymptotically flat if
\begin{enumerate}\setlength{\itemsep}{1mm}
\item[(i)] $\mathcal{E}$ is diffeomorphic to $\mathbb{R}^n\setminus \overline{ B_{\mathrm{euc}}(1)}$;
\item[(ii)] $g$ restricted on $\mathcal{E}$ is smooth such that
\begin{equation*}
|g_{ij}-\delta_{ij}|+r|\partial g_{ij}|+r^2 |\partial^2 g_{ij}|\leq Cr^{-\sigma},\; r=|x|;
\end{equation*}
for some $\sigma>\frac{n-2}{2}$.
\item[(iii)] the scalar curvature $R(g)$ of $g$ defined on $\mathcal{E}$ is in $L^1(\mathcal{E})$.
\end{enumerate}
We say that $(M^n,g,\mathcal{E})$ is an asymptotically flat manifold with $C^0$ arbitrary ends.
\end{defn}

On the asymptotically flat end $\mathcal{E}$, the ADM mass associated to it is defined to be
\begin{defn}[\cite{ArnowittDeserMisner1961}]
The ADM mass of $\mathcal{E}$ is defined by
\begin{equation}
m(M,g,\mathcal{E}) = \lim_{r\to +\infty} \frac1{2(n-1)|\mathbb{S}^{n-1}|} \int_{\{|x|=r\}} (g_{ij,j}-g_{jj,i}) \nu^i \, dx
\end{equation}
where $g_{ij,k}=\de_{k}g_{ij}$ denotes the ordinary partial derivative, $|\mathbb{S}^{n-1}|$ is the volume of unit sphere and $\nu$ is the outward unit normal of $\{x\in \mathbb{R}: |x|=r\}$ in $\mathbb{R}^n$.

\end{defn}

By the work of Bartnik \cite{Bartnik1986}, it is known that the ADM mass is a geometric quantity independent of the choice of coordinate system at infinity.

\medskip

Motivated by the works of the third name author \cite{Zhu2022A} and Lee-Lesourd-Unger \cite{LeeLesourdUnger2022} on the positive mass theorem with arbitrary ends and the removable singularity theorem of Jiang-Sheng-Zhang \cite{JiangShengZhang2020}, we study the problem of singular positive mass theorem  with arbitrary ends when the singularity is possibly non-compact appearing on the other ends. The following is the main result.

\begin{thm}\label{intro-main}
Let $3\leq n\leq 7$ and $(M^n,g,\mathcal{E})$ be an asymptotically flat manifold with $C^0$ arbitrary ends. Assume that
\begin{enumerate}\setlength{\itemsep}{2mm}
\item[(i)] the metric $g$ is in $W^{1,p}_{\mathrm{loc}}(M)\cap C^\infty(M-S)$, where $S$ is a closed (not necessarily bounded) subset in $M$ disjoint from $\mathcal E$ with either
\begin{enumerate}\setlength{\itemsep}{1mm}
    \item $\mathcal{H}_{\mathrm{loc}}^{n-\frac{p}{p-1}}(S)<\infty$ if $n<p<\infty$ or
    \item  $\mathcal{H}^{n-1}(S)=0$ if $p=\infty$.
\end{enumerate}
\item[(ii)] $R(g)\geq 0$ in $M-S$.
\end{enumerate}
Then we have $m(M,g,\mathcal{E})\geq 0$.
\end{thm}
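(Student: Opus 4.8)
The strategy is to reduce the singular, arbitrary-ends statement to the known smooth positive mass theorem for asymptotically flat manifolds with arbitrary ends (Zhu \cite{Zhu2022A}, Lee-Lesourd-Unger \cite{LeeLesourdUnger2022}) by a two-stage smoothing: first replace $g$ by a smooth metric with almost-nonnegative scalar curvature and almost the same mass, then remove the smallness. The genuinely new difficulty compared to \cite{JiangShengZhang2020} is that $S$ is noncompact, so neither a single global mollification nor a single conformal correction with compactly supported data will work; one must localize in a way compatible with the (possibly infinitely many) pieces of $S$ accumulating toward the non-distinguished ends, while keeping the ADM mass of $\mathcal E$ under control.

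First I would set up the \emph{local smoothing near $S$}. Cover a neighborhood of $S$ by countably many coordinate charts in which $g\in W^{1,p}$, and mollify $g$ at scale $\varepsilon$ to get $g_\varepsilon$, glued to the original $g$ away from a shrinking neighborhood $U_\varepsilon$ of $S$ via a partition of unity; since $g$ is already smooth off $S$ and continuous everywhere, $g_\varepsilon\to g$ in $C^0_{\mathrm{loc}}$ and in $W^{1,p}_{\mathrm{loc}}$, and $g_\varepsilon=g$ outside $U_\varepsilon$, so in particular $g_\varepsilon=g$ on $\mathcal E$ and $m(M,g_\varepsilon,\mathcal E)=m(M,g,\mathcal E)$ exactly. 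The scalar curvature $R(g_\varepsilon)$ is uncontrolled near $S$, but the Hausdorff dimension hypothesis $\mathcal H^{n-\frac p{p-1}}(S)<\infty$ (resp.\ $\mathcal H^{n-1}(S)=0$) is exactly what is needed so that, as in \cite{JiangShengZhang2020}, the negative part $R(g_\varepsilon)_-$ can be made to have small $L^{n/2}$-norm on each fixed compact set: the volume of $U_\varepsilon$ decays like $\varepsilon^{p/(p-1)}$ (resp.\ faster than any power when $p=\infty$), and $|R(g_\varepsilon)|\lesssim \varepsilon^{-1}\|\nabla g\|_{L^p}\cdot(\ldots)$, and H\"older's inequality balances the two. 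The key structural point is to do this so the $L^{n/2}$ bound for $R(g_\varepsilon)_-$ is \emph{summable over an exhaustion}, i.e.\ on $B_{R_k}$ one gets a bound $\delta_k$ with $\sum_k\delta_k<\infty$ after choosing $\varepsilon=\varepsilon_k$ small enough on the $k$-th chart region; this is the substitute for the compactness of $S$.

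Next comes the \emph{conformal correction}. Given $g_\varepsilon$ with $R(g_\varepsilon)$ possibly slightly negative on a noncompact region, I would solve (locally, then patch, or on an exhaustion with a limiting argument) a Yamabe-type equation $-\Delta_{g_\varepsilon}u+c_nR(g_\varepsilon)u=0$, $c_n=\frac{n-2}{4(n-1)}$, for a positive $u=u_\varepsilon\to 1$ at the distinguished infinity, so that $\tilde g_\varepsilon=u_\varepsilon^{4/(n-2)}g_\varepsilon$ has $R(\tilde g_\varepsilon)\geq 0$ on $M-S$. Solvability and the estimate $\|u_\varepsilon-1\|_{C^0}\to 0$, $u_\varepsilon-1=O(r^{-(n-2)})$ at $\mathcal E$ with the mass correction term controlled, follow from the smallness of $\|R(g_\varepsilon)_-\|_{L^{n/2}}$ together with the arbitrary-ends maximum-principle / barrier arguments of \cite{Zhu2022A,LeeLesourdUnger2022,LesourdUngerYau2021} — but here one must confront \emph{noncompactness of the region where a correction is needed}, and this is the step I expect to be the main obstacle: standard proofs solve a global elliptic PDE using a variational or exhaustion argument that implicitly uses that the "bad set" is compact. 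The fix is presumably to do the conformal correction chart-by-chart with cutoffs, absorbing the errors into the summable sequence $\delta_k$ above, and to show the resulting metric (or a weak limit along the exhaustion) still has the right behavior at $\mathcal E$. One must also verify that $S$ remains a set across which the metric is $W^{1,p}_{\mathrm{loc}}$ with the same Hausdorff bound, i.e.\ the correction does not destroy the regularity structure.

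Finally, once $\tilde g_\varepsilon$ is smooth on $M-S$ with $R(\tilde g_\varepsilon)\geq 0$ there, $\tilde g_\varepsilon\in W^{1,p}_{\mathrm{loc}}$ across $S$ with the same dimension bound, and $m(M,\tilde g_\varepsilon,\mathcal E)=m(M,g,\mathcal E)+o(1)$, I would invoke the \emph{compact} singular positive mass theorem with arbitrary ends — or rather reprove it in this localized setting — to conclude $m(M,\tilde g_\varepsilon,\mathcal E)\geq -o(1)$; more precisely, one applies the known removable-singularity result of \cite{JiangShengZhang2020} combined with the smooth arbitrary-ends PMT of \cite{Zhu2022A,LeeLesourdUnger2022}. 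Here again noncompactness of $S$ means one cannot literally cite \cite{JiangShengZhang2020}; instead I expect the bulk of the paper is to redo their mollification-plus-conformal-change argument on an exhaustion $\{B_{R_k}\}$, using the summable smallness $\sum\delta_k<\infty$ to pass to a limit, and then feed the resulting smooth metric into the minimal-surface (Schoen-Yau, $3\le n\le 7$) or the density-theorem approach of \cite{Zhu2022A} to get nonnegativity of the mass of $\mathcal E$. Letting $\varepsilon\to 0$ then gives $m(M,g,\mathcal E)\geq 0$.
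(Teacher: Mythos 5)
Your proposal correctly isolates the two genuine difficulties created by a noncompact $S$, but the fix you propose---mollify near $S$, then solve the Lichnerowicz/Yamabe equation ``chart-by-chart with cutoffs, absorbing the errors into a summable sequence $\sum\delta_k$'' on an exhaustion---is precisely the strategy the paper argues \emph{cannot} be made to work, and the paper bypasses it entirely. There are two obstructions you do not resolve. First, to solve the conformal equation on an exhaustion $\{B_{R_k}\}$ and pass to a limit you need a uniform Harnack or Sobolev inequality near the arbitrary ends, and there is none: the geometry of those ends is completely unknown (only completeness is assumed), so the standard variational/exhaustion argument has no compactness. Summable smallness of $\|R(g_\varepsilon)_-\|_{L^{n/2}(B_{R_k})}$ does not substitute for a Sobolev constant on $B_{R_k}$. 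Second, even if one could produce a positive solution $u_\varepsilon$, one must show $u_\varepsilon$ is bounded below away from zero to keep $\tilde g_\varepsilon=u_\varepsilon^{4/(n-2)}g_\varepsilon$ complete; when $S$ is bounded this follows because $u_\varepsilon$ is harmonic outside a compact set (maximum principle in \cite{LeeLesourdUnger2022}, or the shifting argument in \cite{Zhu2022A}), but when $S$ is unbounded the equation has a nontrivial zeroth-order term all the way out to the ends and both arguments break down. Your cutoff-and-sum scheme does not give a uniform positive lower bound on $u_\varepsilon$ at the ends.

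The paper's actual route avoids these problems entirely by never solving a global conformal equation near $S$. It (1) establishes a quantitative \emph{shielding version of the Geroch conjecture} (Theorem~\ref{Thm: Geroch conjecture}) via $\mu$-bubbles, asserting that a compact manifold with boundary, of the form $\mathbb{T}^n\#\tilde N$ with an exhaustion $U_0\subset U_1\subset U_2\subset N$ and $\mathbb{T}^l$-stabilized nonnegative scalar curvature lower bound $\sigma$, must satisfy $\inf_{\bar{\mathcal A}}\sigma\cdot D_0\cdot D_1\le 4$; (2) proves a \emph{singular density theorem} (Proposition~\ref{Prop: density}) making $g$ conformally flat and scalar flat near $\mathcal E$ with almost the same mass---this conformal change is supported only near the distinguished asymptotically flat end, where good decay and Sobolev control are available, so none of the noncompactness issues arise; (3) uses \emph{Lohkamp compactification} (Proposition~\ref{Prop: compactification}): negative mass plus conformal flatness lets one bend the end and glue in a torus, producing a region with strictly positive scalar curvature in a neck $\mathcal A$; (4) \emph{cuts off} the manifold at a large but finite distance past the neck (so the cut happens in a region where $g$ is smooth and the singular set is bounded away), and applies a \emph{local smoothing keeping the shielding condition} (Proposition~\ref{Prop: smoothing})---a compactly supported mollification combined with the Fischer-Colbrie-Schoen warping trick, not a conformal change---to produce a smooth metric with $\mathbb{T}^1$-stabilized scalar curvature $\ge 0$ and shielding product $>4$. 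This contradicts the shielding Geroch theorem. The essential point you are missing is that the positive-scalar-curvature neck created by negative mass \emph{shields} the topology obstruction from whatever lies beyond, so one never needs to control the geometry or the singular set near the arbitrary ends at all; that is a different mechanism than trying to propagate smallness estimates out to infinity.
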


We also prove the following rigidity result for $p\in (n,\infty)$ and $p=\infty$ under an additional assumption. We expect that the additional a-priori regularity of $g$ in case of $p=\infty$ is unnecessary.

\begin{thm}\label{intro-rigidity}
Under the same assumptions of Theorem \ref{intro-main}, assume that either
\begin{enumerate}\setlength{\itemsep}{1mm}
\item[(a)] $n<p<\infty$ or
\item[(b)] $g$ is $L^\infty(M)\cap W^{1,\infty}(M)$ with respect to a smooth metric $h$ of bounded geometry of infinity order (see Definition \ref{Defn: 4.1} and \ref{Defn: 4.2}).
\end{enumerate}
If $m(M,g,\mathcal{E})=0$, then $(M,g)$ is isometric to the standard Euclidean space as a metric space. Moreover, there is a $C^{1,\a}_{\mathrm{loc}}$ diffeomorphism $\Psi:M\to \mathbb{R}^n$ for some $\a\in (0,1)$ which is smooth outside $S$ such that $g=\Psi^*g_{\mathrm{euc}}$ on $M$.
\end{thm}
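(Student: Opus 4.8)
The plan is to reduce first to the scalar-flat case, then to upgrade to flatness away from $S$ by a rigidity version of Theorem~\ref{intro-main}, and finally to globalize the flat structure by a developing (harmonic-coordinate) map. Throughout, the single source of difficulty is that $S$ is noncompact, so the usual localizations near the singular set are unavailable and every elliptic problem below has to be solved on all of $M$ with estimates that do not degenerate as $S$ runs off to infinity.

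\emph{Step 1: reduction to $R(g)\equiv 0$ on $M-S$.} If this fails, then $\int_{M-S}R(g)\,\varphi\,dV_g>0$ for some nonnegative $\varphi\in C^\infty_c(M-S)$. I would solve the conformal Laplacian equation $-\tfrac{4(n-1)}{n-2}\Delta_g u+R(g)u=0$ on $M$ with $u\to 1$ along $\mathcal E$; by the hypotheses $u\in W^{2,p}_{\mathrm{loc}}\hookrightarrow C^{1,\a}_{\mathrm{loc}}$ and $u>0$, and along $\mathcal E$ one has $u=1-A|x|^{2-n}+o(|x|^{2-n})$ with $A>0$ strictly, since positive scalar curvature has been removed. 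Then $\ti g:=u^{4/(n-2)}g$ again satisfies all hypotheses of Theorem~\ref{intro-main} (it is $W^{1,p}_{\mathrm{loc}}\cap C^\infty(M-S)$, with the same $S$ and $\mathcal E$ and $R(\ti g)=0$ on $M-S$), yet $m(M,\ti g,\mathcal E)<m(M,g,\mathcal E)=0$, contradicting Theorem~\ref{intro-main}. Hence $R(g)\equiv 0$ on $M-S$. In case (b) I would pose this equation relative to the bounded-geometry reference metric $h$ of Definitions~\ref{Defn: 4.1}--\ref{Defn: 4.2}, whose uniform geometry provides the $S$-independent elliptic estimates, and check that $\ti g$ stays complete.

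\emph{Step 2: from $R(g)\equiv 0$ and $m=0$ to flatness on $M-S$.} I would follow the Schoen--Yau rigidity scheme \cite{SchoenYau1979C}, but using only perturbations and test objects supported in $M-S$ so that $S$ is never touched. If the trace-free Ricci tensor of $g$ were nonzero at a point of $M-S$, a symmetric $2$-tensor $h$ supported in a small ball in $M-S$ (so $g+th=g$ near $S$ and near $\mathcal E$) can be chosen so that, after renormalizing $g+th$ conformally to be scalar-flat, the ADM mass strictly decreases to first order in $t$; this is impossible since the renormalized mass is $\ge 0$ by Theorem~\ref{intro-main} and equals $0$ at $t=0$. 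Hence $\Ric(g)\equiv 0$ on $M-S$. To pass from Ricci-flat to flat when $4\le n\le 7$ I would run the minimal-hypersurface descent already underlying Theorem~\ref{intro-main}: the marginal case $m=0$ produces, in the limit, a complete two-sided stable minimal hypersurface, forced to be totally geodesic with flat induced metric by $\Ric(g)\equiv 0$, and sliding it yields a local foliation by such hypersurfaces, whence $g$ is flat on $M-S$. (Alternatively one may attempt to shortcut Step~2 by applying the smooth positive mass theorem with arbitrary ends of \cite{Zhu2022A,LeeLesourdUnger2022} to a Ricci-flow regularization $g(t)$ of $g$, provided the ADM mass is shown to be preserved along the flow.)

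\emph{Step 3: globalization.} Solving $\Delta_g u^i=0$ on $M$ with $u^i-x^i\to 0$ along $\mathcal E$ produces harmonic coordinates $\Psi=(u^1,\dots,u^n)\colon M\to\mathbb R^n$ with $u^i\in W^{2,p}_{\mathrm{loc}}\hookrightarrow C^{1,\a}_{\mathrm{loc}}$ and $u^i\in C^\infty(M-S)$. On $M-S$ the metric is flat, so in these coordinates $g=g_{\mathrm{euc}}$ and $\Psi$ restricts to a local isometry; using completeness, the thinness of $S$, the asymptotically flat end $\mathcal E$, and the structure of complete flat manifolds, $\Psi$ extends continuously across $S$ to a bijection $M\to\mathbb R^n$ with $g=\Psi^*g_{\mathrm{euc}}$, whose global regularity is $C^{1,\a}_{\mathrm{loc}}$ by Calder\'on--Zygmund estimates for the harmonic-coordinate system with $W^{1,p}$ coefficients and $C^\infty$ on $M-S$; in particular $(M,g)$ is isometric to Euclidean space as a metric space. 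The main obstacle is the noncompactness of $S$ entering each of the three steps: one must solve the conformal Laplacian equation, the harmonic-coordinate system, and (if used) the regularizing flow on $(M,g)$ with a priori bounds that are uniform as $S$ escapes to infinity, and producing such uniform bounds --- which in case (b) is exactly the purpose of the bounded-geometry reference metric --- is the crux of the argument.
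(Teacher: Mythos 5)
Your Step 1 and the first part of Step 2 are in line with the paper's Proposition~\ref{Prop:ricci-flat}: scalar flatness on $M-S$ follows from a conformal perturbation and the positive mass theorem, and Ricci flatness on $M-S$ follows from a compactly supported perturbation $g-t\eta\,\Ric(g)$ away from $S$ and $\mathcal E$. Beyond that, though, the proposal diverges and has a substantial gap.

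\textbf{The main gap: Ricci-flat $\not\Rightarrow$ flat.} In Step 2 you claim that on $M-S$, $\Ric(g)\equiv 0$ already forces $\Rm(g)\equiv 0$ by a minimal-hypersurface descent: the marginal case of the mass inequality produces a stable minimal hypersurface, totally geodesic and flat, and sliding it yields a local foliation. This argument does not work here. It is the classical \emph{smooth} Schoen--Yau rigidity, which requires a global area-minimizing hypersurface on a complete smooth AF manifold; you are working on $M-S$ with $S$ a noncompact singular set, so the minimizing hypersurface is not available without first removing $S$, and in any case Ricci flatness alone does not imply flatness in the interior of a $4$-to-$7$ manifold. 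The paper instead obtains flatness globally, not locally: it first proves a \emph{removable singularity} theorem --- Lemma~\ref{Lem: L p regularity} and the harmonic-coordinate bootstrap in Theorem~\ref{thm:one-end-1} --- which upgrades the $W^{1,p}$ Ricci-flat metric to a \emph{smooth} Ricci-flat metric with respect to a new smooth structure. Only then does the Bishop--Gromov volume comparison together with the Euclidean volume growth of $\mathcal E$ give $\Vol(B_R)\equiv |B_{\mathrm{euc}}(R)|$ and hence full flatness and one-endedness. Your proposal never establishes smoothness across $S$, so the volume comparison is unavailable, and the foliation argument you substitute in its place is circular (it tries to deduce flatness from $\Ric=0$ alone).

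\textbf{A secondary gap: the global harmonic chart in Step 3.} You propose to solve $\Delta_g u^i=0$ with $u^i-x^i\to 0$ at $\mathcal E$ and take $\Psi=(u^1,\ldots,u^n)$ as a global $C^{1,\a}$ chart. To run this you already need to know that $M$ is one-ended and diffeomorphic to $\mathbb R^n$ --- which is exactly the conclusion obtained in the paper only after the change of smooth structure, flatness, and one-endedness have been established. Also, for $p=\infty$ (case (b)) the paper does not even use harmonic coordinates; it runs the Ricci--Deturck flow from $g_0$, proves $\Ric(g(t))\equiv 0$ for $t>0$ by an integral estimate exploiting $\mathcal H^{n-1}(S)=0$, and then obtains the $C^{1,\a}_{\mathrm{loc}}$ diffeomorphism by integrating the Deturck vector field and showing $\Psi_t$ has uniform $W^{2,p}_{\mathrm{loc}}$ bounds. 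Your parenthetical ``alternatively use Ricci-flow regularization, provided the ADM mass is preserved'' is the closest to the actual proof, but the paper does not track the ADM mass along the flow --- it passes to the Ricci flatness of $g(t)$ and uses that plus the initial rigidity for smooth metrics. You would need to flesh out this flow argument (mass preservation along a short-time Ricci--Deturck flow starting from $W^{1,p}$ data is itself nontrivial), which in its developed form is essentially Section~4.3 of the paper.

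In short: add the removable-singularity and change-of-smooth-structure argument and replace the ``local foliation'' step by the volume comparison; for the diffeomorphism you will want either the $W^{2,p}$ harmonic coordinate bootstrap (for finite $p$) or the Ricci--Deturck flow (for $p=\infty$), not a single global harmonic chart solved a priori.
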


In contrast with the previous works, the main distinction of our work here is that we allow the singular set $S$ to be unbounded, which leads to great trouble when we try to reduce the problem to the smooth case. Indeed, there are two reasons why the standard smoothing-and-conformal argument fails in our setting. Firstly, from the unboundness of $S$, the smoothing metric has negative scalar curvature in an unbounded region, then the solvability of the corresponding conformal equation is in doubt since we do not have a global Sobolev inequality due to unknown geometry on those arbitrary ends. Secondly, even if the conformal equation can be solved, it is hard to obtain the completeness of the conformal metric since we cannot obtain enough control on the decay rate of the conformal factor. We point out that the positive lower bound of the conformal factor can be obtained through the maximum principle \cite{LeeLesourdUnger2022} or a shifting argument \cite{Zhu2022A} when $S$ is bounded (and so the conformal factor is harmonic outside a compact subset). However those arguments both fail when $S$ happens to be unbounded.

Our strategy here is largely inspired by the shielding theorem obtained in \cite{LesourdUngerYau2021} where it was shown that the geometry far behind a positive scalar curvature region does not affect the positivity of the ADM mass. It is well-known that negative mass can be transformed into positive scalar curvature inside through some deformation, and this further indicates that we can cut away those arbitrary ends at an appropriate position to prove the desired positive mass theorem. This is indeed the case and let us sketch our proof as follows.
Suppose on the contrary that the asymptotically flat manifold with $C^0$ arbitrary ends in Theorem \ref{intro-main} has negative mass. At the first step, we want to transform the negative mass to positive scalar curvature inside. In practice, we establish a singular density theorem (i.e.\ Theorem \ref{Prop: density}) such that the distinguished end becomes conformally flat with negative mass while the singularity keeps unchanged. Next, we can use the compactification method of Lohkamp \cite{Lohkamp1999} to close the manifold by $n$-torus $\mathbb{T}^{n}$ and now we obtain a singular metric on $\mathbb T^n\# N$ for some non-compact manifold $N$, whose scalar curvature is nonnegative in the smooth part and is positive in a neck region. Then we establish a shielding version of the Geroch conjecture (i.e.\ Proposition \ref{Thm: Geroch conjecture}) on $\mathbb T^n\#N$ using the method of $\mu$-bubbles, which guarantees us to find a right position $\mathbb T^n\# N$ to cut away arbitrary ends such that the remaining compact parts (still in the form of $\mathbb T^n\#\tilde N$ for some compact $\tilde N$) still violate our shielding version of the Geroch conjecture. Now we return to a singular problem on compact manifolds and the standard smoothing argument works to obtain a contradiction. (We find that the warping trick from \cite{FischerColbrieSchoen1980} is more convenient than conformal deformation to show the stability of the shielding condition \eqref{Eq: shielding condition}, see Proposition \ref{Prop: smoothing}.)

\medskip

The organization of this paper is as follows. In Section 2, we will establish a shielding version of the Geroch conjecture. In Section 3, we will establish the density theorem and use it to prove the positive mass theorem. In Section 4, we will discuss the case when the ADM mass of the distinguished end vanishes.

\section{A shielding version of Geroch conjecture}
The shielding principle for Riemannian manifolds with nonnegative scalar curvature yields that boundary far behind a domain with positive scalar curvature behaves like mean-convex. Such philosophy has appeared in many works, for instances see  \cite{CecchiniZeidler2021,LeeLesourdUnger2022}.  In this work, we consider the Georoch conjecture using the same perspective. Recall that the resolution of the classical Geroch conjecture says that the $n$-torus $\mathbb{T}^n$ admits no smooth metric with positive scalar curvature. This conjecture was proved independently by Gromov-Lawson \cite{GromovLawson1980} and Schoen-Yau \cite{SchoenYau1979A,SchoenYau1979B,SchoenYau2017}.  In fact, Schoen-Yau \cite{SchoenYau2017} are able to rule out the existence of positive scalar curvature metrics on the connected sum $\mathbb{T}^n\# X$, where $X$ is an arbitrary closed orientable $n$-manifold.  More recently, Chodosh-Li \cite{ChodoshLi2020} showed for $n\leq 7$ that there is no complete smooth metric on $\mathbb{T}^n\#X$ with positive scalar curvature even when $X$ is an orientable open $n$-manifold.  When $X$ is a compact orientable $n$-manifold with boundary, it is not difficult to see that the minimal surface argument by Schoen-Yau \cite{SchoenYau2017} also asserts that $\mathbb{T}^n\#X$ admits no smooth metric with positive scalar curvature and mean convex boundary. Motivated by this, in this work we consider the shielding principle under the setting with boundary. Indeed, we shall consider the slightly more general curvature condition, the $\mathbb{T}^l$-stablized scalar curvature lower bound which is introduced by Gromov in \cite{Gromov2020}.
\begin{defn}
Let $(N,g)$ be a Riemannian manifold and $\sigma:N\to \mathbb R$ a continuous function on $N$. We say that $(N,g)$ has {\it $\mathbb{T}^l$-stablized scalar curvature lower bound $\sigma$} if there are $l$ positive smooth functions $u_1,u_2,\ldots,u_l$ such that the warped metric
\begin{equation}
g_{\mathrm{warp}}=g+\sum_{i=1}^lu_i^2 d\theta_i^2
\end{equation}
on $N\times \mathbb{T}^l$ satisfies $R(g_{\mathrm{warp}})\geq \sigma$.
\end{defn}

The main result in this section is as follows.
\begin{thm}\label{Thm: Geroch conjecture}
Let $n\leq 7$. Assume $(N^n,g)$ is a compact Riemannian manifold with boundary associated with a finite open exhaustion
\begin{equation}
\mathbb{T}^n-B\approx U_0\subset U_1\subset U_2\subset N.
\end{equation}
Assume that there is a nonnegative continuous function $\sigma$ such that $(N,g)$ has $\mathbb{T}^l$-stablized scalar curvature lower bound $\sigma$.
Then we have
\begin{equation}
\inf_{\bar{\mathcal A}}\sigma\cdot D_0 \cdot D_1\leq 4,
\end{equation}
where $\mathcal A=U_2-\bar U_1$, $D_0=\dist(\partial U_2,U_1)$ and $D_1=\dist(\partial N,U_2)$.
\end{thm}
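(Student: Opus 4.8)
The plan is to argue by contradiction via the $\mu$-bubble (warped/weighted minimal slicing) method, following the Chodosh–Li scheme adapted to the shielding setup. Assume $\inf_{\bar{\mathcal A}}\sigma \cdot D_0 \cdot D_1 > 4$; abbreviate $s_0 := \inf_{\bar{\mathcal A}}\sigma > 0$. Working on the warped product $(N\times\mathbb{T}^l, g_{\mathrm{warp}})$ with $R(g_{\mathrm{warp}})\geq\sigma$, one first produces a codimension-one weighted minimizer separating $\partial N$ from $U_2$. Concretely, fix a smooth function $h$ on the region $N\setminus U_2$ (pulled back to the product) that blows up to $+\infty$ near $\partial U_2$ and to $-\infty$ near $\partial N$, with $|\nabla h|$ controlled by roughly $2/D_1$ on the bulk; such a prescribed-mean-curvature functional
\[
\mathcal{A}_h(\Sigma) = \mathcal{H}^{n+l-1}(\Sigma) - \int_{\Omega_\Sigma} h
\]
admits a smooth minimizer $\Sigma_1$ in the relevant homology class (here $n+l\leq 7$ is exactly what lets us invoke regularity of the $\mu$-bubble, so the dimension restriction $n\leq 7$ is used, allowing for the torus factors by taking $\Sigma_1$ to be $\mathbb{T}^l$-invariant). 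The class is chosen to be the one detecting the "neck" of $\mathbb{T}^n\#(\cdots)$, i.e. a class on which any competitor must be nontrivial because $U_0\approx \mathbb{T}^n - B$.

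Next I would feed the second variation (stability) of $\Sigma_1$ into the standard rearrangement: combining $R(g_{\mathrm{warp}})\geq\sigma$, the Gauss equation, and the prescribed mean curvature term $h$, one gets that $\Sigma_1$ with the conformally-modified metric carries, after the usual trick of absorbing the lowest eigenfunction, a metric of positive "$h$-scalar curvature" — and crucially the pointwise scalar-curvature-type lower bound one inherits is of the shape $\sigma - \tfrac{n+l-1}{n+l-2}|\nabla h|^2 + (\text{good terms})$, which on the relevant sub-region is bounded below by $s_0 - \tfrac{c}{D_1^2}$ or, after the width-estimate bookkeeping, survives because $s_0 D_1^2$ is large. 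Iterating this descent one more time across the slab of width $D_0$ between $U_1$ and $U_2$ — again a $\mu$-bubble with gradient $\sim 2/D_0$ — reduces the problem to a closed manifold of the form $\mathbb{T}^{n-2}\times(\text{lower-dim torus factors})\times(\text{something})$ that still carries a metric with (stabilized) positive scalar curvature, i.e. one contradicts the resolved Geroch conjecture (Gromov–Lawson / Schoen–Yau, in its $\mathbb{T}^l$-stabilized form), because the torus directions from $U_0$ are never destroyed by the slicing. The numerology is engineered so that each of the two $\mu$-bubble steps "costs" a factor of the form $\int|\nabla h| \lesssim 2$, and the two widths $D_0, D_1$ enter as the two slab thicknesses; the product bound $s_0 D_0 D_1 \leq 4$ is precisely the threshold at which the descent can no longer be run.

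The main obstacle, and where I would spend the most care, is the bookkeeping that keeps the positivity of the (stabilized) scalar curvature through \emph{both} successive $\mu$-bubble reductions while simultaneously tracking the two distances $D_0$ and $D_1$ as the widths of the two slabs $U_2\setminus\bar U_1$ and $N\setminus\bar U_2$. One has to choose the warping/weight functions $h$ on each slab so that $|\nabla h|$ is as close to $2/D$ as desired (a one-dimensional comparison/Fermi-coordinate computation using $\dist(\partial U_2,U_1)$ and $\dist(\partial N,U_2)$), verify the $\mu$-bubble exists and is regular in dimension $\leq 7$ even with the $\mathbb{T}^l$ factor present (reducing to a $\mathbb{T}^l$-symmetric problem, or equivalently running the argument for a warped functional on $N$ directly so the effective dimension stays $\leq 7$), and check that the leftover closed manifold genuinely retains enough torus topology — this last point is why the hypothesis is phrased with the exhaustion starting from $\mathbb{T}^n - B$ and why $\sigma$ only needs to be positive on the \emph{annular} region $\mathcal A = U_2 - \bar U_1$: positivity is consumed exactly on the slab where the first descent happens, and the shielding philosophy says nothing is needed on the far region $N\setminus U_2$ beyond $R\geq 0$, which is guaranteed since $\sigma\geq 0$ everywhere. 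I would also need the elementary fact that a $\mu$-bubble separating two sets at distance $D$ with $|\nabla h|\leq 2/D - \epsilon$ forces a diameter/width bound, which is the engine converting "$D$ large" into "scalar curvature positivity persists."
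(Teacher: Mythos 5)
Your proposal is structurally different from the paper's argument, and the difference is exactly where the gap lies. The paper does \emph{not} run two successive $\mu$-bubble descents, one across the slab of width $D_1$ and one across the slab of width $D_0$. It lifts to the natural $\mathbb{Z}$-cover $\tilde N\to N$ induced by the map $N\to\mathbb{T}^n\to\mathbb{T}^{n-1}\times\mathbb{R}$ (collapsing $N-U_0$ to a point), and constructs a \emph{single} warped $\mu$-bubble there, with one prescribed-mean-curvature weight $\tilde h=\tilde h_1+\mathrm{sgn}(-\tilde h_1)\,h_2(\rho_2\circ\pi)$. The factor $\tilde h_1$ controls the $\mathbb{R}$-direction in the cover and contributes nothing to the $4$; the factor $h_2$ is a solution of a Riccati ODE $h_2^2+2h_2'=-\sigma_0\eta$ in which $D_0$ and $D_1$ enter \emph{asymmetrically}: $D_0$ is the width over which the scalar-curvature source term $\sigma_0=\inf_{\bar{\mathcal A}}\sigma$ ``charges'' $h_2$ (driving $h_2(D_0)\approx -\sqrt{\sigma_0}\tan(\sqrt{\sigma_0}D_0/2)\approx -\sigma_0 D_0/2$), and $D_1$ is the remaining blow-up buffer, governed by the source-free Riccati equation, whose length is $\approx -2/h_2(D_0)\approx 4/(\sigma_0D_0)$. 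Requiring the total blow-up time $T\approx D_0+4/(\sigma_0D_0)$ to lie strictly inside $D_0+D_1$ is precisely the inequality $\sigma_0D_0D_1>4$. That is the entire source of the product and of the constant $4$; it is the content of Lemma~\ref{Lem: f}.

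Your ``each slab costs a factor $\lesssim 2$'' heuristic cannot reproduce this. Two independent band-width arguments across slabs of widths $D_0$ and $D_1$ would give conditions of the form $\sigma_0 D_0^2\gtrsim C$ and $\sigma_0D_1^2\gtrsim C$ separately, not a product $\sigma_0 D_0D_1>4$. Indeed you also write that positivity ``survives because $s_0D_1^2$ is large,'' but the hypothesis does not control $s_0D_1^2$: take $D_0$ huge and $D_1$ tiny, so that $s_0D_0D_1>4$ yet $s_0D_1^2$ is arbitrarily small — your second descent would then fail. So this is a genuine gap, not merely a different route. A secondary omission: you never pass to the $\mathbb{Z}$-cover, which the paper needs both so that the reference slice $\Sigma_0=\tilde F^{-1}(\mathbb{T}^{n-1}\times\{0\})$ bounds (making the Caccioppoli-set minimization well posed) and so that the resulting $\mu$-bubble $\Sigma$ maps with nonzero degree onto $\mathbb{T}^{n-1}$, feeding the contradiction with Proposition~\ref{Prop: Tn no PSC} about Schoen--Yau--Schick manifolds. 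To repair your argument you would need to replace the two independent slab steps by a single weight function of the paper's two-phase Riccati type, at which point you would essentially be reconstructing the paper's proof.
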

\begin{rem}
Without much effort we can obtain the same conclusion when $\mathbb{T}^n$ is replaced by an arbitrary Schoen-Yau-Schick manifold (see Definition \ref{Defn: SYS} and \cite[Section 5]{Gromov2018}).
\end{rem}

The proof of Theorem~\ref{Thm: Geroch conjecture} is based on using $\mu$-bubble and topology to derive contradiction. We start with introducing the Schoen-Yau-Schick manifolds.

\begin{defn}[Schoen-Yau-Schick (SYS) manifolds]\label{Defn: SYS}
A closed orientable $n$-manifold $N$ is said to be a Schoen-Yau-Schick  manifold if there exist $(n-2)$ cohomology $1$-class $\beta_1,\ldots,\beta_{n-2}$ in $H^1(N,\mathbb Z)$ such that
\begin{equation}
[N]\frown(\beta_1\smile\beta_2\smile\cdots\smile \beta_{n-2})\in H_2(N,\mathbb Z)
\end{equation}
is non-spherical, i.e. any smooth representation cannot consist of $2$-spheres.
\end{defn}

\begin{rem}\label{Rem: SYS}
If $N$ admits a non-zero degree map to $\mathbb{T}^n$, then it is a Schoen-Yau-Schick manifold.
\end{rem}

The following is a generalization of Schoen-Yau's non-existence theorem on torus \cite{SchoenYau1979B,SchoenYau2017}.

\begin{prop}\label{Prop: Tn no PSC}
Let $n\leq 7$ and $N^n$ is a SYS manifold. Then $N$ cannot admit any smooth metric with $\mathbb{T}^l$-stablized scalar curvature lower bound $\sigma>0$.
\end{prop}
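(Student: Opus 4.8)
The plan is to run the Schoen--Yau descent/minimal hypersurface argument, but first reducing away the $\mathbb T^l$-stabilization so that we only have to deal with an honest positive scalar curvature metric. Suppose $N^n$ carries a smooth metric $g$ with $\mathbb T^l$-stabilized scalar curvature lower bound $\sigma>0$, witnessed by positive functions $u_1,\dots,u_l$ and the warped metric $g_{\mathrm{warp}}=g+\sum_i u_i^2\,d\theta_i^2$ on $N\times\mathbb T^l$ with $R(g_{\mathrm{warp}})\geq\sigma>0$. Since $N$ is compact (closed), $\sigma$ has a positive minimum, so $N\times\mathbb T^l$ is a closed orientable $(n+l)$-manifold with a smooth metric of positive scalar curvature. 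The key observation is that $N\times\mathbb T^l$ is again a SYS manifold in dimension $n+l$: taking the pulled-back classes $\beta_1,\dots,\beta_{n-2}\in H^1(N;\mathbb Z)$ together with the $l$ standard generators $d\theta_1,\dots,d\theta_l\in H^1(\mathbb T^l;\mathbb Z)$ gives $(n+l)-2$ degree-one cohomology classes on $N\times\mathbb T^l$ whose cup product, capped with the fundamental class, is $\big([N]\frown(\beta_1\smile\cdots\smile\beta_{n-2})\big)\times[\mathbb T^l]$; if this were spherical then, projecting to the $N$-factor, $[N]\frown(\beta_1\smile\cdots\smile\beta_{n-2})$ would be spherical in $H_2(N;\mathbb Z)$, contradicting the SYS hypothesis. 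So it suffices to prove the statement for genuine positive scalar curvature, i.e.\ to show that a closed orientable SYS manifold of dimension $\leq 7+l$ admits no smooth metric of positive scalar curvature. But here we must be careful: the dimension bound $n\leq 7$ is essential for the regularity of the minimal hypersurfaces, and $n+l$ can be arbitrarily large. This is the point where the stabilization cannot simply be thrown away, and instead one performs the Schoen--Yau inductive descent directly on $(N\times\mathbb T^l, g_{\mathrm{warp}})$, using the $l$ torus directions first.

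Concretely, I would argue by induction on the codimension of the descent, following Schoen--Yau \cite{SchoenYau2017} and Gromov's account in \cite{Gromov2018,Gromov2020}. Start on $N\times\mathbb T^l$ with $R(g_{\mathrm{warp}})>0$. The generators $d\theta_1,\dots,d\theta_l$ and the pulled-back classes $\beta_1,\dots,\beta_{n-2}$ give $(n+l)-2$ classes in $H^1(N\times\mathbb T^l;\mathbb Z)$. Dualize the class $d\theta_1$ (Poincaré dual an integral hypersurface, then minimize area in that homology class) to get a smooth stable minimal hypersurface $\Sigma_1$ in dimension $n+l-1\leq 7+l-1$; by the second variation inequality and the conformal/warping trick of Schoen--Yau, $\Sigma_1$ carries a metric with positive scalar curvature (when $n+l-1\leq 7$) or more precisely with a $\mathbb T$-stabilized positive scalar curvature lower bound in higher dimensions, which is exactly the inductive hypothesis one propagates. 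Iterate: peel off $d\theta_2,\dots,d\theta_l$ and then $\beta_1,\dots,\beta_{n-3}$, reducing the dimension by one each time, until after $(n+l)-2$ steps one reaches a closed orientable surface $\Sigma$. The homological bookkeeping is the standard one: at each stage the remaining cohomology classes restrict nontrivially because their cup product is still a nonzero homology class, and by construction the final surface represents (a multiple of a component of) $[N]\frown(\beta_1\smile\cdots\smile\beta_{n-2})$ under the projection to $N$. Since that class is non-spherical, $\Sigma$ has a component of positive genus; but $\Sigma$ inherits (via the conformal change on the last step) a metric of positive scalar curvature, i.e.\ positive Gauss curvature after the conformal change, so by Gauss--Bonnet every component is a sphere — contradiction.

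The technical heart, and the step I expect to be the main obstacle, is the propagation of the curvature condition through the descent in the presence of the torus factors, i.e.\ checking that "$\mathbb T^l$-stabilized scalar curvature lower bound $\sigma>0$" is the correct self-improving hypothesis. The stable minimal hypersurface $\Sigma_1\subset N\times\mathbb T^l$ has, from stability,
\begin{equation*}
\int_{\Sigma_1}\big(|\nabla\phi|^2-\tfrac12(R_{g_{\mathrm{warp}}}-R_{\Sigma_1}+|A|^2)\phi^2\big)\geq 0\quad\text{for all }\phi\in C^\infty(\Sigma_1),
\end{equation*}
and one must convert this, together with $R_{g_{\mathrm{warp}}}>0$, into a positive (stabilized) scalar curvature statement on $\Sigma_1$ — this is exactly the Schoen--Yau conformal trick (solve $-\Delta\phi+\tfrac14(R_{\Sigma_1}-|A|^2)\phi=\lambda\phi$ for the first eigenvalue, which is positive, and conformally change by $\phi^{2/(m-2)}$), and when the dimension is still $>2$ it must be done in a way that records a leftover warping function for the next stage. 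The bookkeeping that the cohomological nontriviality survives each minimization (so that the minimizer is non-trivial and one can continue) is the other delicate point; this is where Remark \ref{Rem: SYS} and the non-sphericity hypothesis are used, and it is cleanest to cite the corresponding statements in \cite[Section 5]{Gromov2018} and \cite{SchoenYau2017} rather than redo them. Given those inputs, assembling the contradiction via Gauss--Bonnet on the terminal surface is routine, so I would keep the write-up short and reference the established machinery for the inductive step, emphasizing only the SYS-closure of $N\times\mathbb T^l$ and the terminal-surface argument.
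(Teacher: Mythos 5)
You correctly identify the dimensional tension early — ``the dimension bound $n\leq 7$ is essential for the regularity of the minimal hypersurfaces, and $n+l$ can be arbitrarily large'' — but then the proposal does not actually resolve it. You go on to minimize (unweighted) area in the $(n+l)$-dimensional manifold $N\times\mathbb T^l$ and claim to obtain a \emph{smooth} stable minimal hypersurface $\Sigma_1$. For $n+l\geq 8$ this is exactly the regularity failure you flagged: Poincar\'e-dualizing $d\theta_1$ and minimizing area among all competitors in $N\times\mathbb T^l$ gives a current whose singular set can be nonempty. The observation that $N\times\mathbb T^l$ is again SYS doesn't help, because it just recasts the problem as the same statement one dimension class higher, and the smooth Schoen--Yau descent is precisely what you cannot run there without the regularity. (If you instead meant to invoke the full singular-slicing machinery of \cite{SchoenYau2017}, that is a much heavier apparatus than needed here and is not what the hypothesis $n\leq 7$ in the proposition is calibrated for.)

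The device that the paper uses and that you are missing is to never leave the base. One minimizes the \emph{weighted} area functional
\begin{equation*}
\mathcal A(\Sigma)=\int_\Sigma \rho\,d\mu_g,\qquad \rho=\prod_{i=1}^l u_i,
\end{equation*}
over hypersurfaces $\Sigma\subset N$ in the homology class $[N]\frown\beta_1$. Since $\mathcal A(\Sigma)=\mathcal H^{n+l-1}_{g_{\mathrm{warp}}}(\Sigma\times\mathbb T^l)$, the minimizer $\Sigma_1$ (smooth because the ambient dimension is $n\leq 7$) is exactly the $\mathbb T^l$-invariant minimal hypersurface you want, and the $\mathbb T^l$-invariance is what lets the second variation and Gauss equation hand you a positive function $u_{l+1}$ on $\Sigma_1$ so that $\Sigma_1$ has $\mathbb T^{l+1}$-stabilized scalar curvature lower bound $\sigma>0$. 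Iterating entirely within the chain $N\supset\Sigma_1\supset\cdots\supset\Sigma_{n-2}$, the ambient dimension drops from $n$ down to $2$ and stays $\leq 7$ at every step, so regularity is never in question. At the end, Gauss--Bonnet (in the warped form, as in \cite[Lemma 2.3]{Zhu2020A}) applied to a positive-genus component of $\Sigma_{n-2}$ gives the contradiction $0<\int\sigma\leq 8\pi\chi\leq 0$. Two small secondary points: the homological bookkeeping in your ``$\beta_1,\dots,\beta_{n-3}$'' miscounts by one (you must use all of $\beta_1,\dots,\beta_{n-2}$), and the cap-product identity you write for $N\times\mathbb T^l$ should produce $\bigl([N]\frown(\beta_1\smile\cdots\smile\beta_{n-2})\bigr)\times[\mathrm{pt}]$, not $\times[\mathbb T^l]$ — though neither affects the conclusion that the class is non-spherical.
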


\begin{proof}
This follows easily from the dimension descent argument by Schoen-Yau (see \cite{SchoenYau2017} for instance).  We include a sketch for the reader's convenience. Suppose on the contrary, there is a smooth metric $g$ on $N$ and positive smooth functions $u_1, u_2,\ldots, u_l$ such that the warped metric
\begin{equation}
g_{\mathrm{warp}}=g+\sum_{i=1}^lu_i^2 dt_i^2
\end{equation}
on $N\times \mathbb{T}^l$ satisfies $R(g_{\mathrm{warp}})\geq \sigma>0$.

Consider the following minimization problem.  Let
\begin{equation}
\mathcal A(\Sigma)=\mathcal H^{n+l-1}_{g_{\mathrm{warp}}}(\Sigma\times \mathbb{T}^l)=\int_{\Sigma} \rho\, d\mu_g
\end{equation}
among all hypersurfaces $\Sigma$ in the homology class $[N]\frown \beta_1$ where $\rho=\prod_{i=1}^l u_i$. Since $n\leq 7$, by geometric measure theory we can find a smooth hypersurface $\Sigma_1$ representing the homology class $[N]\frown \beta_1$ such that it minimizes the functional $\mathcal A$.  Moreover, since $u_i$ are $\mathbb{T}^l$-invariant, it follows from the second variation formula and the Gauss equation (for instances see \cite[Lemma 14]{ChodoshLi2020}) that for all $f\in C^\infty(\Sigma_1)$ and extend it trivially to $\Sigma_1\times \mathbb{T}^l$, we have
\begin{equation}
0 \leq  \int_{\Sigma_1\times \mathbb{T}^l}-f\Delta_{\Sigma_1\times \mathbb{T}^l}f -\frac12(|h_{\Sigma_1}|^2+\sigma-R_{\Sigma_1\times \mathbb{T}^l})f^2 \;d\mu_g,
\end{equation}
where $\Delta_{\Sigma_1\times \mathbb{T}^l}$ denotes the Laplacian of the metric on $\Sigma_1\times \mathbb{T}^l$ induced by the warped product metric and $R_{\Sigma_1\times \mathbb{T}^l}$ denotes the scalar curvature of the induced metric. This shows that there exists a positive function $u_{l+1}\in C^\infty(\Sigma_1)$ such that
\begin{equation}
\Delta_{\Sigma_1\times \mathbb{T}^l} u_{l+1}\leq \frac12(-\sigma+R_{\Sigma_1\times \mathbb{T}^l})u_{l+1}.
\end{equation}
Since $u_i$ are $\mathbb{T}^l$-invariant, we see that the metric $\pi^* g +\sum_{i=1}^{l+1}u_i^2 dt^2_i$ on $\Sigma_1\times \mathbb{T}^{l+1}$ has scalar curvature
\begin{equation}
R_{\Sigma_{1}\times T^{l}}-\frac{2\Delta_{\Sigma_1\times \mathbb{T}^l} u_{l+1}}{u_{l+1}} \geq R_{\Sigma_{1}\times T^{l}}+\sigma-R_{\Sigma_{1}\times T^{l}} = \sigma.
\end{equation}
Equivalently, $\Sigma_1$ has $\mathbb{T}^{l+1}$-stabilized scalar curvature lower bound $\sigma>0$.

By the assumption
\begin{equation}
[\Sigma_1]\frown (\beta_2\smile\cdots\smile \beta_{n-2})\mbox{ is non-spherical},
\end{equation}
we can apply the above argument inductively to obtain a non-spherical closed orientable surface $\Sigma_{n-2}$ which has $T^{l+n-2}$-stabilized scalar curvature lower bound $\sigma>0$. Let $\Sigma_{n-2}'$ be one of the component of $\Sigma_{n-2}$ with positive genus. From the proof of \cite[Lemma 2.3]{Zhu2020A} (see also \cite[Lemma 2.5]{SchoenYau2017}), we have
\begin{equation}
0<\int_{\Sigma_{n-2}'}\sigma\,\mathrm dA\leq 8\pi\chi(\Sigma_{n-2}')\leq 0,
\end{equation}
which leads to a contradiction.
\end{proof}

Next, we will need to construct the weight for the $\mu$-bubble. The next lemma is a slight modification of \cite[Lemma 2.3]{Zhu2020B}, which can be proved by an almost identical argument.

\begin{lma}\label{Lem: h epsilon}
For any $\e\in(0,1)$, there is a function
\begin{equation}
h_\e:\left(-\e^{-1},\e^{-1} \right)\to\mathbb R
\end{equation}
such that
\begin{enumerate}
\item[(i)] $h_\e$ satisfies
\begin{equation*}
h_\e^2+2h_\e'=\e^2 \mbox{ on } (-\e^{-1},-1]\cup[1, \e^{-1})
\end{equation*}
and there is a universal constant $C$ independent of $\e$ so that
\begin{equation*}
\sup_{[-1,1]} \left|h_\e^2+2h_\e'\right|\leq C\e;
\end{equation*}
\item[(ii)] $h_\e'<0$ and
\begin{equation*}
\lim_{t\to\mp \e^{-1}} h_\e(t)=\pm\infty;
\end{equation*}
\item[(iii)] $h_\e$ converge smoothly to $0$ on any closed interval as $\e\to 0$.
\end{enumerate}
\end{lma}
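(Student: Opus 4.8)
The statement to prove is Lemma~\ref{Lem: h epsilon}, the construction of the weight function $h_\e$ for the $\mu$-bubble.

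\medskip

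The plan is to construct $h_\e$ by prescribing its behavior piecewise and then checking the three properties. On the outer intervals $(-\e^{-1},-1]$ and $[1,\e^{-1})$ we want the Riccati-type ODE $h_\e^2 + 2h_\e' = \e^2$ to hold with $h_\e$ blowing up to $+\infty$ as $t\to -\e^{-1}$ and to $-\infty$ as $t\to \e^{-1}$; the natural ansatz is $h_\e(t) = \e\cdot\phi(\e t)$ for a fixed profile, or more directly one solves the ODE explicitly. Writing $h_\e = 2w'/w$ turns $h_\e^2+2h_\e' = \e^2$ into the linear equation $4w'' = \e^2 w$, whose solutions are combinations of $e^{\pm \e t/2}$; choosing, say, $w(t) = \cosh(\e(t-t_0)/2)$ type combinations and tuning the phase so that $w$ vanishes precisely at $t = \pm\e^{-1}$ gives $h_\e = \e\tanh$ or $\e\coth$ expressions that have the required poles with the correct signs. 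By symmetry one takes $h_\e$ odd, so that on $[1,\e^{-1})$ it decreases from a finite value to $-\infty$ and on $(-\e^{-1},-1]$ it decreases from $+\infty$ to a finite value. On the middle interval $[-1,1]$ one then interpolates: choose any smooth odd function joining the boundary data (values and derivatives) from the two outer pieces, keeping $h_\e'<0$ throughout. Since on $[-1,1]$ the outer formula would already give $|h_\e^2+2h_\e'| = \e^2 \le \e$, a careful interpolation can be arranged so that $h_\e^2 + 2h_\e'$ stays within $O(\e)$; here one uses that $h_\e$ and $h_\e'$ on $[-1,1]$ are themselves $O(\e)$ (the boundary values at $\pm1$ are $O(\e)$ because $\tanh(\e/2)\sim \e/2$), so the quadratic term is $O(\e^2)$ and one only needs $|2h_\e'| \le C\e$, which is automatic if the interpolation has controlled slope. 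This simultaneously yields (iii): on any fixed closed interval, for $\e$ small the interval lies inside $(-\e^{-1},\e^{-1})$, and the explicit formulas show $h_\e \to 0$ smoothly since all derivatives of $\e\tanh(\e(t-t_0)/2)$-type expressions are $O(\e)$ uniformly on compact sets.

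\medskip

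Concretely, I would set, on the outer region, $h_\e(t) = -\e\tanh\!\big(\tfrac{\e}{2}(t + c_\e)\big)$ for $t \ge 1$ with $c_\e$ chosen so the pole of the analytic continuation sits at $t = \e^{-1}$ — but since $\tanh$ has no real pole, the cleaner choice producing a genuine blow-up is $h_\e(t) = -\e\coth\!\big(\tfrac{\e}{2}(t-\e^{-1})\big)$ for $t\in[1,\e^{-1})$, which satisfies $h_\e^2+2h_\e' = \e^2$ identically, is negative and decreasing on that range, and tends to $-\infty$ as $t\to\e^{-1}$; one checks $\tanh/\coth$ bookkeeping to confirm the sign of $h_\e'$. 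Define $h_\e$ on $(-\e^{-1},-1]$ by odd reflection, $h_\e(t) = -h_\e(-t)$, giving the $+\infty$ blow-up at $-\e^{-1}$. Then glue on $[-1,1]$ by an odd smooth function matching the one-sided jets at $\pm 1$, with negative derivative — e.g. a suitable cubic-in-$t$ correction of the linear interpolant, scaled by $\e$ — and verify $\sup_{[-1,1]}|h_\e^2 + 2h_\e'| \le C\e$ by the $O(\e)$-smallness of $h_\e|_{[-1,1]}$ and its derivatives.

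\medskip

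The main obstacle, though it is minor, is the middle-interval interpolation: one must match values and first derivatives at $t=\pm 1$ coming from the explicit outer solution, keep $h_\e' < 0$ on all of $[-1,1]$, maintain oddness, and still get the $O(\e)$ bound on $h_\e^2+2h_\e'$ — all uniformly in $\e$. Since the paper states this is "a slight modification of \cite[Lemma 2.3]{Zhu2020B}, which can be proved by an almost identical argument," I would organize the write-up to quote that construction and indicate only the modifications: the change of the asymptotic constant from whatever appears there to $\e^2$, and the adjustment of the blow-up locations to $\pm\e^{-1}$. The smooth convergence in (iii) then follows, as above, from the uniform $O(\e)$ control of $h_\e$ and all its derivatives on compact subsets.
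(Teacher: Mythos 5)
Your overall strategy is sound and matches the standard $\mu$-bubble weight construction (which is essentially what the cited Lemma 2.3 of Zhu's earlier work does): solve the Riccati equation $h^2+2h'=\e^2$ exactly on the outer intervals via $h=2w'/w$ with $4w''=\e^2 w$, place the pole at $\pm\e^{-1}$, reflect oddly, and interpolate smoothly on $[-1,1]$ using the $O(\e)$-smallness of the boundary data. The paper itself gives no proof here, only the citation, so your explicit construction is exactly what a self-contained proof would look like.

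However, your concrete formula has a sign error that makes every stated property of it false. You write $h_\e(t)=-\e\coth\!\big(\tfrac{\e}{2}(t-\e^{-1})\big)$ on $[1,\e^{-1})$. On that range the argument of $\coth$ is negative, so $\coth(\cdot)<0$ and $-\e\coth(\cdot)>0$: the function is positive, not negative. Differentiating gives $h_\e'=\tfrac{\e^2}{2}\operatorname{csch}^2(\cdot)>0$, so it is increasing, not decreasing. As $t\to\e^{-1}{}^-$ the argument tends to $0^-$, $\coth\to-\infty$, so $h_\e\to+\infty$, not $-\infty$. And it fails the Riccati identity: $h_\e^2+2h_\e'=\e^2\coth^2+\e^2\operatorname{csch}^2=\e^2(1+2\operatorname{csch}^2)\neq\e^2$. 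The correct choice is the opposite sign, $h_\e(t)=\e\coth\!\big(\tfrac{\e}{2}(t-\e^{-1})\big)$ on $[1,\e^{-1})$, for which one checks $h_\e^2+2h_\e'=\e^2(\coth^2-\operatorname{csch}^2)=\e^2$, $h_\e'=-\tfrac{\e^2}{2}\operatorname{csch}^2<0$, $h_\e<0$ there, and $h_\e\to-\infty$ as $t\to\e^{-1}$. Extending by odd reflection $h_\e(t)=-h_\e(-t)$ then gives the $+\infty$ blow-up at $-\e^{-1}$ and keeps $h_\e'<0$. With this corrected formula the boundary values at $\pm1$ are $\pm\e\coth\!\big(\tfrac{1-\e}{2}\big)=O(\e)$ and the one-sided derivatives are $O(\e^2)$, so your interpolation and (iii) arguments go through as you describe. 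Fix the sign and the proof is complete.
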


We also need the construction of another weight function.

\begin{lma}\label{Lem: f}
Given positive constants $\sigma_0$, $t_0$ and $\e$, we can find a smooth function $f:[0,T)\to (-\infty,0]$ with
\begin{equation}
T\leq t_0+\frac{4+\e}{\sigma_0 t_0}
\end{equation}
such that
\begin{enumerate} \setlength\itemsep{1mm}
\item[(i)] $f$ vanishes around $t=0$ and $f$ diverges to $-\infty$ as $t\to T$;
\item [(ii)]  $f$ is monotone non-increasing;
\item[(iii)] If $T\leq t_0$, then $f$ satisfies
\begin{equation}\label{Eq: Sc condition 1}
f^2+2f'\geq -\sigma_0\mbox{ in }[0,T).
\end{equation}
Otherwise $f$ satisfies
\begin{equation}\label{Eq: Sc condition 2}
f^2+2f'\geq -\sigma_0\mbox{ in }[0,t_0]\mbox{ and }f^2+2f'\geq 0 \mbox{ in }[t_0,T).
\end{equation}
\end{enumerate}
\end{lma}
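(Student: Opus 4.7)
The plan is to build $f$ explicitly from two ODE profiles -- the extremal solutions of $f^2+2f'=-\sigma_0$ and $f^2+2f'=0$ respectively -- glued together, then smoothed. Separation of variables gives the two natural candidates
\begin{equation*}
f_{\mathrm{I}}(t)=-\sqrt{\sigma_0}\,\tan\!\Bigl(\tfrac{\sqrt{\sigma_0}}{2}(t-\delta)\Bigr),\qquad f_{\mathrm{II}}(t)=-\frac{2}{a-t},
\end{equation*}
where $\delta>0$ is a small parameter to be chosen and $a$ is the blow-up time of the second piece. The function $f_{\mathrm{I}}$ starts at $0$ at $t=\delta$, is smooth, strictly decreasing and satisfies $f_{\mathrm{I}}^2+2f_{\mathrm{I}}'\equiv -\sigma_0$; the function $f_{\mathrm{II}}$ is smooth, strictly decreasing on $(-\infty,a)$ and satisfies $f_{\mathrm{II}}^2+2f_{\mathrm{II}}'\equiv 0$ with a blow-up at $t=a$.

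Next I would distinguish the two cases according to when $f_{\mathrm{I}}$ itself already blows up. If the blow-up time $\delta+\pi/\sqrt{\sigma_0}$ of $f_{\mathrm{I}}$ is $\le t_0$, then $f_{\mathrm{I}}$ alone, defined on $[\delta,\delta+\pi/\sqrt{\sigma_0})$ and extended by $0$ on $[0,\delta]$, already delivers conclusion (iii) in its first form with $T\le t_0$. Otherwise $\sqrt{\sigma_0}(t_0-\delta)/2<\pi/2$, and I would use $f_{\mathrm{I}}$ on $[\delta,t_0]$ and then switch to $f_{\mathrm{II}}$ on $[t_0,a)$ by solving $f_{\mathrm{II}}(t_0)=f_{\mathrm{I}}(t_0)=:f_0<0$, which fixes $a=t_0+2/|f_0|$. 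Using the elementary inequality $\tan x\ge x$ on $[0,\pi/2)$ gives
\begin{equation*}
|f_0|=\sqrt{\sigma_0}\,\tan\!\Bigl(\tfrac{\sqrt{\sigma_0}(t_0-\delta)}{2}\Bigr)\ge \tfrac{\sigma_0(t_0-\delta)}{2},
\end{equation*}
so $a\le t_0+\tfrac{4}{\sigma_0(t_0-\delta)}$, and choosing $\delta$ sufficiently small makes the right-hand side at most $t_0+(4+\e/2)/(\sigma_0 t_0)$. The sign, monotonicity, and limit-properties of $f$ are immediate from those of $f_{\mathrm{I}},f_{\mathrm{II}}$.

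The one obstacle is that the piecewise construction is only $C^0$ across the joins at $t=\delta$ (zero joins tangent) and at $t=t_0$ (tangent joins rational). To fix this I would smooth $f$ by a standard mollification supported in tiny neighbourhoods of these two points together with a small downward shift of size $O(\eta)$ inside the mollification windows, so that the mollified function remains strictly less than the original on those tiny intervals; since $f^2+2f'$ is continuous and satisfies the strict inequality away from the join points (with equality at them), taking $\eta$ small and compensating by an $O(\eta)$ increment to the bound $T$ absorbs the loss into the remaining $\e/2$ slack in $(4+\e)/(\sigma_0 t_0)$. I expect this smoothing step to be the main technical annoyance: one has to verify that the perturbation preserves monotonicity, keeps $f\le 0$, and does not destroy either $f^2+2f'\ge-\sigma_0$ (on $[0,t_0]$) or $f^2+2f'\ge 0$ (on $[t_0,T)$); this is handled by choosing the downward shift to dominate the $C^1$ error introduced by mollifying $f'$, which is an explicit but fiddly computation.
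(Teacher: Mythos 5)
Your two extremal profiles and the blow-up estimate are exactly the computation the paper uses (it also gets $|f(t_0)|\ge\sqrt{\sigma_0}\tan\bigl(\tfrac{\sqrt{\sigma_0}}{2}(t_0-2\delta t_0)\bigr)$ and then the rational profile $\bigl(\tfrac{1}{f(t_0)}+\tfrac12(t-t_0)\bigr)^{-1}$, with $\tan x\ge x$ giving $T\le t_0+(4+\e)(\sigma_0t_0)^{-1}$). The genuine gap is in your smoothing step at the join $t=t_0$. There the one-sided derivatives are $f_{\mathrm I}'(t_0)=-\tfrac12(f_0^2+\sigma_0)$ and $f_{\mathrm{II}}'(t_0)=-\tfrac12 f_0^2$, and on $[t_0,T)$ the lemma demands the \emph{exact} inequality $f^2+2f'\ge 0$, which your unmollified function satisfies only with equality on $(t_0,T)$ — there is no "strict inequality away from the join" to spend (indeed $f^2+2f'$ is not even continuous at the joins; it jumps between $-\sigma_0$ and $0$). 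Consequently, for $t\in[t_0,t_0+\eta)$ the mollified derivative averages in left-hand slopes that are too negative by $\tfrac{\sigma_0}{2}\cdot(\text{mollifier mass left of }t_0)$, so at $t_0$ itself one gets $f_\eta^2+2f_\eta'\approx-\tfrac{\sigma_0}{2}$, a deficit of definite size \emph{independent of} $\eta$. A downward shift of size $O(\eta)$ only gains $2c|f_0|+c^2=O(\eta)$ in the $f^2$ term, so it cannot repair this; nor can the loss be "absorbed into the $\e$-slack in $T$", since that slack concerns the bound on $T$ and not the pointwise differential inequality, which carries no $\e$. (At the other corner $t=\delta$ your scheme is fine in principle: there $|f|=O(\eta)$ on the window, the loss is $O(\eta^2)$, and an $O(\eta)$ shift does dominate it, though the bookkeeping is as fiddly as you predict.)

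The repair is easy but must be said: make the transition from the $-\sigma_0$ regime to the $0$ regime happen strictly inside $[0,t_0]$, e.g.\ glue (and mollify) at $t_0-\mu$ for small $\mu$, so that on $[t_0,T)$ the function is exactly the rational profile and $f^2+2f'=0$ there, while on the transition zone only the weaker bound $\ge-\sigma_0$ is required and the rational profile has slack $\sigma_0$ to give away; the $T$-estimate survives since $|f(t_0-\mu)|\ge\tfrac{\sigma_0}{2}(t_0-\mu-\delta)$. This is in effect what the paper does, and more cleanly: it defines $f$ as the solution of the ODE $f^2+2f'=-\sigma_0\eta(t)$, $f(0)=0$, where $\eta$ is a smooth cutoff supported in $[0,t_0]$ with $\eta\equiv1$ on $[\delta t_0,(1-\delta)t_0]$. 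Smoothness and both inequalities are then automatic (no mollification at all), and the blow-up time is estimated by comparing with your two explicit profiles on the regions where $\eta\equiv1$ and $\eta\equiv0$. So the analytic content of your proposal is right, but as written the smoothing at $t_0$ fails and needs the relocation of the gluing point (or the ODE-with-cutoff formulation) to close the argument.
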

\begin{proof}
Fix a small positive constant $\delta$. We take a cutoff function
\begin{equation}
\eta:[0,+\infty)\to [0,1]
\end{equation}
supported in $[0,t_0]$ such that $\eta\equiv 0$ around $t=0$ and when $t\geq t_0$, and also $\eta\equiv 1$ in $[\delta t_0,(1-\delta)t_0]$. Let $f$ be the solution to the following ordinary differential equation
\begin{equation}
f^2+2f'=-\sigma_0\eta\mbox{ in }(0,\infty)\mbox{ with }f(0)=0.
\end{equation}
Clearly the function $f$ is monotone non-increasing and it vanishes around $t=0$ and satisfies \eqref{Eq: Sc condition 1} or \eqref{Eq: Sc condition 2} depending on the value of $T$. Now let us estimate the blow-up moment $T$. Notice that we have
\begin{equation}
-\frac{1}{2}\mathrm dt=\frac{\mathrm df}{f^2+\sigma_0}\mbox{ when }\delta t_0\leq t\leq (1-\delta)t_0.
\end{equation}
If $T\leq t_0$, we are done. So we just work with the possibility where $T>t_0$. Through integration from $\delta t_0$ to $(1-\delta)t_0$ we obtain
\begin{equation}
\arctan\left(\frac{f(t_0-\delta t_0)}{\sqrt {\sigma_0}}\right)=-\frac{\sqrt{\sigma_0}}{2}(t_0-2\delta t_0)+\arctan\left(\frac{f(\delta t_0)}{\sqrt {\sigma_0}}\right).
\end{equation}
From monotonicity of $f$ we see $f(\delta t_0)\leq 0$ and also
\begin{equation}
f(t_0)\leq f(t_0-\delta t_0)\leq -\sqrt{\sigma_0}\tan\left(\frac{\sqrt{\sigma_0}}{2}(t_0-2\delta t_0)\right).
\end{equation}
From a similar analysis on interval $[t_0,t]$ with $t<T$ we obtain
\begin{equation}
0 > f(t)=\left(\frac{1}{f(t_0)}+\frac{1}{2}(t-t_0)\right)^{-1}.
\end{equation}
This implies
\begin{equation}
\begin{split}
t-t_0&<\left(\frac{\sqrt{\sigma_0}}{2}\tan\left(\frac{\sqrt{\sigma_0}}{2}(t_0-2\delta t_0)\right)\right)^{-1}= \frac{G(\sqrt{\sigma_0}t_0)}{\sigma_0 t_0},
\end{split}
\end{equation}
where
\begin{equation}
G(s)=\frac{2s}{\tan\left(\frac{1-2\delta}{2}s\right)}\leq \frac{4}{1-2\delta}.
\end{equation}
After picking up sufficiently small $\delta$ we have $T\leq t_0+(4+\e)(\sigma_0 t_0)^{-1}$.
\end{proof}

Now we are ready to prove Theorem \ref{Thm: Geroch conjecture}.
\begin{proof}[Proof of Theorem \ref{Thm: Geroch conjecture}]

Denote $E=N-U_0$. By collapsing $E$ to a point and taking a suitable smooth flat product metric $g_0$ on $\mathbb{T}^n$, we can construct a smooth map
\begin{equation}
F:(N,E,g)\to (\mathbb{T}^n,p,g_0)
\end{equation}
such that $F$ is a diffeomorphism between $U_0$ and $\mathbb{T}^n-\{p\}$ and its Lipschitz norm is less than or equal to one. Take the covering space $\pi_{0}:\mathbb{T}^{n-1}\times \mathbb{R}\to \mathbb{T}^n$ and denote $\pi_{0}^{-1}(p)=\bigcup_{i=1}^\infty\{p_i\}$. By considering $\mathbb{T}^{n-1}\times  \mathbb{R}$ as a principal $\mathbb Z$-bundle over $\mathbb{T}^n$, we can pull it back over $N$ to obtain a covering space $\pi:\tilde N\to N$ given by
\begin{equation}
\tilde N=\{(x,y)\in N\times (\mathbb{T}^{n-1}\times \mathbb{R})\,|\,F(x)=\pi_{0}(y)\}
\end{equation}
such that $\pi^{-1}(E)$ has countably many components and each component $E_i$ corresponds to a point $p_i$. Moreover the map $F$ can be lifted to a proper map $\tilde F:\tilde N\to \mathbb{T}^{n-1}\times \mathbb{R}$ such that $\tilde F(E_i)=\{p_i\}$ and $\tilde F$ induces a diffeomorphism between $\tilde N-\bigcup_{i=1}^\infty E_i$ and $\mathbb{T}^{n-1}\times \mathbb{R}-\bigcup_{i=1}^\infty\{p_i\}$. Take one $\mathbb{T}^{n-1}$-slice disjoint from all points $p_i$ and then $\Sigma_0=\tilde F^{-1}(\mathbb{T}^{n-1})$ is an embedded hypersurface in $\tilde N$. Denote $\rho_1:\mathbb{T}^{n-1}\times \mathbb{R}\to \mathbb{R}$ to be the projection map. After changing the $\mathbb{R}$-parameter we can assume that this $\mathbb{T}^{n-1}$-slice is $\mathbb{T}^{n-1}\times \{0\}$ and the Lipschitz norm of $\rho_1$ is less than or equal to one.

It suffices to prove Theorem \ref{Thm: Geroch conjecture} when $\inf_{\bar{\mathcal A}}\sigma$, $D_0$ and $D_1$ are all positive constants. Suppose on the contrary,  we have $\inf_{\bar{\mathcal A}}\sigma\cdot D_0\cdot D_1>4$. Since $N$ has $\mathbb{T}^l$-stabilized scalar curvature lower bound $\sigma$, we can find smooth positive functions $u_1, u_2,\ldots, u_l$ such that the warped metric
\begin{equation}
g_{\mathrm{warp}}=g+\sum_{i=1}^l u_i^2 dt_i^2
\end{equation}
on $N\times \mathbb{T}^l$ satisfies $R(g_{\mathrm{warp}})\geq \sigma$.  Instead of considering the metric and curvature on the warped product manifold, we treat it as a function on $N$ since $u_i$ are $\mathbb{T}^l$-invariant.

Fix a small positive constant $\e$ and consider the first eigenfunction $u_{l+1}$ such that
\begin{equation}
\left\{
\begin{array}{ll}
-\Delta_gu_{l+1}+\e R(g_{\mathrm{warp}})u_{l+1}=\lambda u_{l+1} \ \mbox{ in }N;\\[2mm]
\displaystyle \frac{\partial u_{l+1}}{\partial\nu}=0 \ \mbox{ on }\partial N
\end{array}
\right.
\end{equation}
for $\lambda>0$ due to quasi-positivity of $R(g_{\mathrm{warp}})$, and $\nu$ is the outward unit normal of $\partial N$ in $N$.
Denote
\begin{equation}
\bar g_{\mathrm{warp}}=g_{\mathrm{warp}}+u_{l+1}^2 dt_{l+1}^2.
\end{equation}
Direct computation shows that
\begin{equation}
R(\bar g_{\mathrm{warp}})=R(g_{\mathrm{warp}})-\frac{2\Delta_g u_{l+1}}{u_{l+1}}=(1-2\e)R(g_{\mathrm{warp}})+2\lambda.
\end{equation}
Denote $\tilde g_{\mathrm{warp}}$ to be the lifting metric of $\bar g_{\mathrm{warp}}$ on $\tilde N$. It is easy to see that $\tilde N$ has $\mathbb{T}^{l+1}$-stabilized scalar curvature lower bound $(1-2\e)\sigma\circ \pi+2\lambda$.

In the following, we will construct a hypersurface with prescribing mean curvature where we can obtain a contradiction between its topology and the above positive $\mathbb{T}^{l+1}$-stabilized scalar curvature lower bound of $\tilde N$.

We first construct an appropriate smooth function on $\tilde N$ serving as prescribing mean curvature functions.  For the given $\lambda>0$,  we choose $c$ sufficiently large so that  Lemma \ref{Lem: h epsilon} applied to obtain a function $h_1:(-c,c)\to \mathbb{R}$
such that
\begin{enumerate}\setlength{\itemsep}{1mm}
\item[(a)] $h_1(t)\to \pm\infty$ as $t\to\mp c$;
\item[(b)] $h_1'(t)\leq 0$ for all $t$;
\item[(c)] $h_1^2+2h_1'\geq -\lambda$ for all $t$.
\end{enumerate}
Clearly, we might perturb $c$ slightly if necessary so that $\mathbb{T}^{n-1}\times\{\pm c\}$ is disjoint from all points $p_i$ and so $\tilde V=\tilde F^{-1}([-c,c])$ is a smooth compact region in $\tilde N$. Denote
\begin{equation}
\tilde h_1=h_1\circ \rho_1\circ \tilde F:\tilde V\to [-\infty,+\infty].
\end{equation}
Then
\begin{equation}\label{h1-estimate}
    \tilde h_1^2-2|\mathrm d\tilde h_1|\geq -\lambda
\end{equation}
and $\tilde h_1=\pm\infty$ on $\partial_\mp \tilde V:=\tilde F^{-1}(\mp c)$, since $|d\rho_1|\leq 1$.  Moreover, $\tilde h_1$ is a non-zero constant on each $E_i$ contained in $\tilde V$.

Now we need to construct another function $\tilde h$ on $\tilde V$. By mollifying the distance function, it is not difficult to construct a smooth function
\begin{equation}
\rho_2:N\to [0,+\infty)
\end{equation}
such that
\begin{enumerate}\setlength{\itemsep}{1mm}
\item[(i)] $\rho_2\equiv 0$ in $U_1$;
\item[(ii)]$\rho_2\geq (1-\e)D_0$ outside $U_2$;
\item[(iii)] $ \rho_2\geq (1-\e)(D_0+D_1)$ on $\partial N$;
\item [(iv)] $|d\rho_2|\leq 1$ in $N$.
\end{enumerate}

Let us now apply Lemma \ref{Lem: f} with $\sigma_0=(1-2\e)\inf_{\bar{\mathcal A}}\sigma$, $t_0=(1-\e)D_0$ and $\e$ given from above so that we obtain a function $h_2:[0,T)\to (-\infty,0]$ satisfying
\begin{equation}\label{h2-estimate-1}
h_2^2+2h_2'\geq -(1-2\e)\inf_{\bar{\mathcal A}}\sigma\mbox{ in }[0,T),\,\mbox{ if }T\leq t_0,
\end{equation}
or
\begin{equation}\label{h2-estimate-2}
h_2^2+2h_2'\geq -(1-2\e)\inf_{\bar{\mathcal A}}\sigma\mbox{ in }[0,t_0],\, h_2^2+2h_2'\geq 0 \mbox{ in }[t_0,T),\, \mbox{ if }T>t_0,
\end{equation}
and $h_2(t)\to -\infty$ as $t\to T$. Using $\inf_{\bar{\mathcal A}}\sigma\cdot D_0\cdot D_1>4$, for $\e$ small enough, we have
\begin{equation}\label{EQ: upper bound of T}
\begin{split}
T \leq {} & (1-\e)D_0+\frac{1+\e/4}{(1-2\e)(1-\e)}\cdot \frac{4}{\inf_{\bar{\mathcal A}}\sigma\cdot D_0\cdot D_1}\cdot D_1 \\[1mm]
< {} & (1-\e)(D_0+D_1).
\end{split}
\end{equation}
Note that $T$ may not be a regular value of $\rho_2$, but it can be a regular value of $\tau\rho_2$ for almost every $\tau>0$. Since all properties of $\rho_2$ are kept if  we take $\tau>1$ sufficiently closed to one, we do not bother the reader with technical discussions due to introducing constant $\tau$ but just assume that $T$ is a regular value of $\rho_2$. Denote
\begin{equation}
\tilde V_1=\{\tilde x\in \tilde V: \rho_2\circ \pi\leq T\}
\end{equation}
and clearly $\tilde V_1$ is a compact smooth region of $\tilde N$ which is away from $\partial N$ thanks to the upper bound of $T$ \eqref{EQ: upper bound of T}. Let us define
\begin{equation}
\tilde h(\tilde x)=\tilde h_1(\tilde x)+\sign (-\tilde h_1(\tilde x))\cdot h_2(\rho_2(\pi(\tilde x)))\text{ for all }\tilde x\in \tilde V_1,
\end{equation}
where $\sign(\cdot)$ is the sign function. Note that $\tilde h_1(\tilde x)$ and $\sign (-\tilde h_1(\tilde x))\cdot h_2(\rho_2(\pi(\tilde x)))$ must be of same sign.   It is not difficult to verify that $\tilde h$ is a smooth function in the interior of $\tilde V_1$ and $\tilde h$ takes $+\infty$ or $-\infty$ on the boundary of $\tilde V_1$. Moreover, since $h_i$ are non-increasing, we have
\begin{equation}
\tilde h^2-2|\mathrm d\tilde h|+(1-2\e)\sigma\circ \pi+2\lambda\geq \lambda\mbox{ in }\tilde V_1.
\end{equation}
Here we have used \eqref{h1-estimate}, \eqref{h2-estimate-1} and \eqref{h2-estimate-2}.

Next we are ready to construct the desired hypersurface with prescribed mean curvature in $\tilde V_1$. Recall that the warped metric
\begin{equation}
\tilde g_{\mathrm{warp}}=\pi^*g+\sum_{i=1}^{l+1}u_i^2 dt_i^2
\end{equation}
on $\tilde V_1\times \mathbb{T}^{l+1}$ satisfies $R(\tilde g_{\mathrm{warp}})\geq (1-2\e)\sigma\circ \pi+2\lambda$. Denote $\Int \tilde V_1$ to be the interior part of $\tilde V_1$ and take
\begin{equation}
\Omega_0:=\{\tilde x\in\Int\tilde V_1:\tilde h(x)<0\}.
\end{equation}
In particular we have
\begin{equation}
\partial\Omega_0=\{\tilde h(x)=0\}=\{\tilde h_1(x)=0\}=\tilde F^{-1}(\mathbb{T}^{n-1}\times\{0\})=\Sigma_0.
\end{equation}
Among the set
\begin{equation}
\mathcal C=\{\text{Caccioppoli sets $\Omega$ in $\Int\tilde V_1$ such that $\Omega\Delta \Omega_0\Subset \Int \tilde V_1$}\},
\end{equation}
we try to minimize the functional
\begin{equation}
\mathcal A(\Omega)=\mathcal H^{l+n}_{\tilde g_{\mathrm{warp}}}(\partial\Omega\times \mathbb{T}^{l+1})-\int_{\Int\tilde V_1\times \mathbb{T}^{l+1}}(\chi_{\Omega\times \mathbb{T}^{l+1}}-\chi_{\Omega_0\times \mathbb{T}^{l+1}})\tilde h\,\mathrm d\mathcal H^{l+1+n}_{\tilde g_{\mathrm{warp}}}.
\end{equation}
Since the prescribed mean curvature function $\tilde h$ blows up on $\partial\tilde V_1$, we have a nice barrier condition around $\partial\tilde V_1$ and it follows from geometric measure theory that we can find a smooth region $\Omega$ in $\mathcal C$ minimizing the functional $\mathcal A$ (no singularity issue involves here due to $\mathbb{T}^{l+1}$-invariance and the fact $n\leq 7$). It is clear that the boundary $\Sigma=\partial\Omega$ is homologous to $\Sigma_0$ and from a similar argument as in the proof of \cite[Lemma 2.7]{Zhu2022B} it has $\mathbb{T}^{l+2}$-stablized scalar curvature lower bound
\begin{equation}
(1-2\e)\sigma+2\lambda+\frac{n+l+1}{n+l}\tilde h^2-2|\mathrm d\tilde h|\geq \lambda > 0.
\end{equation}
Notice that the composed map $\Sigma\hookrightarrow \tilde N\to \mathbb{T}^{n-1}\times \mathbb{R}\to \mathbb{T}^{n-1}$ has non-zero degree since the map $\Sigma_0\to \mathbb{T}^{n-1}$ does. We obtain a contradiction to Proposition \ref{Prop: Tn no PSC} and Remark \ref{Rem: SYS}.
\end{proof}

\section{Positive mass theorem with singularity: nonnegative mass}

In this section, we will prove the positive mass theorem. We restate the statement for reader's convenience. 
\begin{thm}[i.e.\ Theorem \ref{intro-main}]\label{Thm: nonnegative mass}
Let $3\leq n\leq 7$ and $(M^n,g,\mathcal E)$ be an asymptotically flat manifold with $C^0$ arbitrary ends. Assume that
\begin{itemize}\setlength{\itemsep}{2mm}
\item[(c1)] the metric $g$ is in $W^{1,p}_{\mathrm{loc}}(M)\cap C^\infty(M-S)$, where $S$ is a closed subset in $M$ disjoint from $\mathcal E$ with
\begin{enumerate}\setlength{\itemsep}{1mm}
    \item [(i)]
$\mathcal{H}_{\mathrm{loc}}^{n-\frac{p}{p-1}}(S)<\infty$ if $n<p<\infty$;
\item [(ii)] $\mathcal{H}^{n-1}(S)=0$ if $p=\infty$;
\end{enumerate}
\item[(c2)] $R(g)\geq 0$ in $M-S$.
\end{itemize}
Then we have $m(M,g,\mathcal E)\geq 0$.
\end{thm}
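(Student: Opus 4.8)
The plan is to argue by contradiction: assume $m(M,g,\mathcal E)<0$ and construct a smooth metric on a compact manifold with boundary that contradicts the shielding Geroch conjecture, Theorem~\ref{Thm: Geroch conjecture}. The first two steps convert the negative mass into positive scalar curvature behind a compact region. \emph{Step 1.} Using the singular density theorem, Theorem~\ref{Prop: density}, deform $g$ to a new asymptotically flat metric with $C^0$ arbitrary ends that is harmonically (conformally) flat near infinity of $\mathcal E$, still satisfies $R\ge 0$ off $S$, still has negative mass on $\mathcal E$, and leaves $S$ and the geometry on the remaining ends untouched. \emph{Step 2.} Since $\mathcal E$ is now harmonically flat with $m(M,g,\mathcal E)<0$, Lohkamp's compactification \cite{Lohkamp1999} lets us excise $\mathcal E$ and glue in a punctured flat $n$-torus through a neck; the result is a metric $\bar g$ on a manifold $P$ diffeomorphic to $\mathbb T^n\# N$, where $N$ is noncompact and carries all the original (complete) arbitrary ends together with $S$, with $\bar g\in W^{1,p}_{\mathrm{loc}}(P)\cap C^\infty(P-S)$, $R(\bar g)\ge 0$ on $P-S$, and $R(\bar g)\ge \sigma_0$ on an annular neck region $\mathcal N$ for a fixed $\sigma_0>0$.

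\emph{Step 3.} Next we use the shielding philosophy to cut $P$ down to a compact piece. The torus summand and the neck $\mathcal N$ lie in a fixed compact region, so by completeness of the arbitrary ends we may fix a compact exhaustion $\mathbb T^n-B\approx U_0\subset U_1\subset U_2\subset N_R$ with $\overline{U_2-U_1}\subset\mathcal N$, where $N_R$ is obtained by truncating the arbitrary ends at distance $R$ beyond $\mathcal N$. Then $D_0=\dist(\partial U_2,U_1)$ is a fixed positive number and $D_1=\dist(\partial N_R,U_2)\ge R$, so for $R$ large we have $\sigma_0\cdot D_0\cdot D_1>4$. We are now on a \emph{compact} manifold with boundary $N_R\approx\mathbb T^n\#\tilde N$ ($\tilde N$ compact), carrying a metric that is smooth away from the now \emph{compact} set $S\cap N_R$; note that Theorem~\ref{Thm: Geroch conjecture} imposes no condition at $\partial N_R$, so no mean-convexity of the cut is needed.

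\emph{Step 4.} Finally we smooth and conclude. We approximate $\bar g$ near $S\cap N_R$ by a smooth metric $\bar g_\delta$ on $N_R$ using the warping trick of Fischer-Colbrie--Schoen \cite{FischerColbrieSchoen1980} in place of a conformal change, which is the content of Proposition~\ref{Prop: smoothing}. The essential input is the Hausdorff condition on $S$: since $\mathcal H^{n-\frac{p}{p-1}}_{\mathrm{loc}}(S)<\infty$ for $n<p<\infty$ (resp.\ $\mathcal H^{n-1}(S)=0$ for $p=\infty$), the negative part of the scalar-curvature error created by mollification can be confined to an arbitrarily thin neighborhood of $S$ with arbitrarily small weighted mass, hence absorbed into additional $\mathbb T^l$-warping factors; the outcome is that $\bar g_\delta$ has $\mathbb T^l$-stabilized scalar curvature lower bound by some continuous $\sigma\ge 0$ on $N_R$ with $\sigma\ge\sigma_0/2$ on $\mathcal N$. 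Keeping the exhaustion of Step~3 (the distances change by at most $o(1)$ as $\delta\to 0$), we get $\inf_{\overline{U_2-U_1}}\sigma\cdot D_0\cdot D_1\ge(\sigma_0/2)\cdot D_0\cdot D_1>4$, contradicting Theorem~\ref{Thm: Geroch conjecture} applied to $(N_R,\bar g_\delta)$.

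The main obstacle is Step~4 when $S$ is noncompact, and Steps~2--3 are engineered precisely to circumvent it. As explained in the introduction, the naive smoothing-and-conformal scheme breaks down on arbitrary ends: smoothing produces negative scalar curvature on an unbounded region, so the Yamabe-type conformal equation need not be solvable in the absence of a global Sobolev inequality, and even granting a solution one cannot control the decay of the conformal factor, so completeness of the rescaled metric is lost. The positive neck $\mathcal N$ instead \emph{shields} the unknown far geometry of the arbitrary ends --- this is the quantitative content of Theorem~\ref{Thm: Geroch conjecture} --- which is what permits truncating the ends first; only afterward, when $S$ has become compact, do the capacity estimate for $S$ and the warping trick of Proposition~\ref{Prop: smoothing} apply.
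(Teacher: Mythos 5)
Your argument mirrors the paper's proof step for step: first the singular density theorem to make the distinguished end conformally flat with still-negative mass, then Lohkamp compactification to a torus connect sum with positive scalar curvature on a neck, then truncation of the arbitrary ends far behind that neck to obtain a compact manifold with boundary on which $S$ is compact, then local smoothing via the Fischer-Colbrie--Schoen warping trick of Proposition~\ref{Prop: smoothing}, and finally the contradiction with the shielding Geroch theorem, Theorem~\ref{Thm: Geroch conjecture}. The only flaw is a bookkeeping slip: you arrange $\sigma_0\cdot D_0\cdot D_1>4$ in Step~3 but then invoke $(\sigma_0/2)\cdot D_0\cdot D_1>4$ in Step~4, so you actually need $R$ large enough that $\sigma_0\cdot D_0\cdot D_1>8$ (the paper avoids this by taking the threshold $5$ and noting that Proposition~\ref{Prop: smoothing} only costs a factor $1-\epsilon$, since the smoothing occurs away from $\bar U_2$ and thus leaves $R$ on the annulus unchanged).
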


We start with showing that we can deform the metric so that it is in addition conformally flat and scalar flat on the distinguished end with almost the same mass. This will be crucial to reduce the problem to non-existence problem of metrics on torus by Lohkamp's trick \cite{Lohkamp1999}.

\begin{prop}[Singular density theorem]\label{Prop: density}
Given an asymptotically flat manifold $(M,g,\mathcal E)$ with $C^0$ arbitrary ends satisfying conditions (c1) and (c2), then for any $\e>0$ we can find a complete metric $\tilde g$ on $M$ such that  $(M,\tilde g,\mathcal E)$ is an asymptotically flat manifold with $C^0$ arbitrary ends satisfying conditions (c1), (c2) and
\begin{itemize}\setlength{\itemsep}{1mm}
\item[(c3)] the metric $\tilde g$ is conformally flat around infinity of $\mathcal E$, i.e.
$$
\tilde g=\tilde u^{\frac{4}{n-2}}g_{\mathrm{euc}}\mbox{ and }R(\tilde g)\equiv 0\mbox{ around }\infty\mbox{ of }\mathcal E;
$$
\item[(c4)] we have
$$
|m(M,\tilde g,\mathcal E)-m(M,g,\mathcal E)|<\e.
$$
\end{itemize}
\end{prop}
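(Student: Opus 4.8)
The plan is to run a two-step deformation that stays away from the singular set $S$ entirely, since $S$ is disjoint from the distinguished end $\mathcal E$ and the whole construction will be supported on $\mathcal E$. First I would use the Schoen--Yau density/harmonic-coordinate trick to replace $g$ on $\mathcal E$ by a metric which is \emph{exactly} conformally flat near infinity with only a tiny change of mass, and second I would correct the (small, compactly supported in an annular region of $\mathcal E$) scalar curvature that this deformation creates, again by a conformal change, so that $R(\tilde g)\equiv 0$ there. Concretely, fix a large $r_0$ and a cutoff $\chi$ equal to $1$ on $\{|x|\ge 2r_0\}$ and $0$ on $\{|x|\le r_0\}$. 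On $\mathcal E$, in the asymptotically flat coordinates, set $g_1=\chi\,(\text{model conformally flat metric})+(1-\chi)\,g$, where the model metric is $\big(1+\tfrac{m}{2|x|^{n-2}}\big)^{\frac{4}{n-2}}g_{\mathrm{euc}}$ (or, more robustly, $u_0^{\frac{4}{n-2}}g_{\mathrm{euc}}$ with $u_0$ the harmonic function on $\mathbb R^n\setminus\overline{B_{r_0}}$ with boundary data matching the conformal factor of $g$ and $u_0\to 1$ at infinity, which exists by solving the exterior Dirichlet problem). One checks from the asymptotics in Definition 1.1(ii)--(iii) that $g_1=g$ outside a compact annulus $A=\{r_0\le|x|\le 2r_0\}$, that $g_1$ is still an asymptotically flat metric of the same type, that $\|g_1-\delta\|$ can be made small on $A$ by taking $r_0$ large, and — crucially — that the mass integrand is essentially unchanged since outside $A$ the two metrics agree, so $|m(M,g_1,\mathcal E)-m(M,g,\mathcal E)|<\e/2$. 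Here $g_1$ is smooth on $M\setminus S$ and equals $g$ near $S$, so conditions (c1) and the smoothness part of (c3) are immediate; but $R(g_1)$ need not be nonnegative on $A$, so (c2) is temporarily lost.

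Next I would restore (c2) and achieve $R\equiv 0$ near infinity by a conformal change $\tilde g=u^{\frac{4}{n-2}}g_1$ solving
\begin{equation*}
-\frac{4(n-1)}{n-2}\Delta_{g_1}u+R(g_1)\,u=0\quad\text{on }M,\qquad u\to 1\ \text{at infinity of }\mathcal E,
\end{equation*}
equivalently $u=1+v$ with $v$ decaying; because $R(g_1)=R(g)\ge 0$ off $A$ and $\supp(R(g_1)-R(g))\subset A$ is a fixed compact set inside $\mathcal E$, and because the $C^0$-norm of $R(g_1)$ on $A$ is as small as we like (shrinking with $r_0$), the operator $-\tfrac{4(n-1)}{n-2}\Delta_{g_1}+R(g_1)$ is a small perturbation of one with nonnegative potential. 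I would solve this by working in weighted Sobolev spaces on $\mathcal E$ (Bartnik) combined with the standard Fredholm/maximum-principle machinery on the compact-modulo-the-end piece — or, cleanly, by the monotone iteration / barrier method using $u^\pm=1\pm C|x|^{2-n}$ as sub/supersolutions for $C$ large and noting $R(g_1)$ compactly supported — to get a unique $u$ with $u-1=O(|x|^{2-n})$ and $u>0$. Since $R(g_1)\ge 0$ off $A$ one gets $0<u\le 1$ there; the key point for completeness is simply that $\tilde g=u^{4/(n-2)}g_1$ agrees with $g$ outside a compact set plus $u$ is bounded above and below by positive constants on $M$ (the bounded region is handled by Harnack), hence $\tilde g$ is complete because $g$ is and $\tilde g=u^{4/(n-2)}g$ is uniformly comparable to $g$ on the complement of a compact set. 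One then improves: since $R(g_1)\equiv 0$ on $\{|x|\ge 2r_0\}$, $u$ is harmonic there, so $u=1+\tfrac{a}{|x|^{n-2}}+O(|x|^{1-n})$ with $a=a(r_0)\to 0$ as the perturbation shrinks, and $\tilde g$ is exactly conformally flat with $R(\tilde g)\equiv 0$ on that end region, giving (c3). The mass change is $m(M,\tilde g,\mathcal E)-m(M,g_1,\mathcal E)=(n-2)\,a$ up to the usual normalization (the conformal factor contributes its leading coefficient to the ADM mass), which is $<\e/2$ for $r_0$ large; combined with the first step this yields (c4).

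I expect the main obstacle to be precisely the \textbf{solvability and decay of the conformal equation on a manifold with a single asymptotically flat end and otherwise arbitrary (and here only $C^0$, $W^{1,p}_{\mathrm{loc}}$) geometry}, where no global Sobolev inequality is available — the very difficulty flagged in the introduction. The escape is that, unlike in the deformation to kill negative scalar curvature globally, here the scalar-curvature perturbation $R(g_1)-R(g)$ is supported in a \emph{compact annulus inside the nice end}, $R(g_1)\ge 0$ elsewhere, and we only ask $u\to 1$ at the one distinguished infinity; so the problem localizes: on $\{|x|\ge r_0\}\subset\mathcal E$ we have genuine asymptotically flat analysis with weighted estimates, and on the rest of $M$ we only need an interior existence statement with the given (nonnegative off $A$) potential, which follows from the maximum principle and elliptic regularity away from $S$ together with the $W^{1,p}_{\mathrm{loc}}$ hypothesis handling a neighborhood of $S$ (there $R(g_1)=R(g)\ge 0$ in $M\setminus S$ and the singular set is negligible for the Dirichlet form since $p>n$). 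Care is needed to make sense of $R(g)\ge0$ and of the conformal Laplacian across $S$ in the $W^{1,p}_{\mathrm{loc}}$ setting, but this is exactly the type of estimate already used for the compact singular positive mass theorems (Jiang--Sheng--Zhang), transplanted here; no new global analysis on the arbitrary ends is required because the deformation never touches them.
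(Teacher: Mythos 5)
Your two–step plan (interpolate to a conformally flat model near infinity, then conformally deform to restore nonnegative scalar curvature) is the same skeleton as the paper's proof, but the conformal equation you write down has a potential that is \emph{globally defined} on $M$, and this is where the argument breaks. You propose to solve
\[
-\tfrac{4(n-1)}{n-2}\Delta_{g_1}u+R(g_1)\,u=0\quad\text{on all of }M,\qquad u\to1\text{ at infinity of }\mathcal E,
\]
with $R(g_1)$ appearing as a coefficient everywhere, including on the arbitrary ends and on a neighborhood of $S$. Two things go wrong. First, near $S$ the metric is only $W^{1,p}_{\mathrm{loc}}$, so $R(g)$ is not a locally integrable function that can be fed into the equation as a zeroth–order coefficient; the ``distributional nonnegativity'' of $R(g)$ used in the compact singular PMT is not the same thing as a well–defined potential in a linear PDE, and nothing in your proposal converts one into the other. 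Second, on the arbitrary ends the solvability of this equation with the normalization $u\to1$ only at $\mathcal E$ is precisely the open–ended difficulty the introduction warns about: your barriers $1\pm C|x|^{2-n}$ live only on $\mathcal E$, the Sobolev inequality you invoke is not available globally, and the exhaustion/Fredholm argument you gesture at gives no control of $u$ on the other ends (lower bound, upper bound, or completeness of $\tilde g$). Asserting that the problem ``localizes'' because $R(g_1)\ge0$ off a compact annulus does not produce a solution.

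The paper sidesteps all of this by \emph{cutting off the potential itself}: the equation actually solved is
\[
\Delta_{\hat g^s_\delta}u_{\delta,s}-\tfrac{n-2}{4(n-1)}\bigl(R(\hat g^s_\delta)\eta_s-b_s\eta_s\bigr)u_{\delta,s}=0,
\]
where $\eta_s$ is supported in a compact annulus $\{s\le r\le 4s\}\subset\mathcal E$ and $\hat g^s_\delta$ is a local mollification of the metric near $S$. Outside the annulus the equation is simply $\Delta u=0$, so no sense has to be made of $R(g)$ at $S$, no data about the arbitrary ends is needed beyond the maximum principle, and a Sobolev inequality is only ever used inside a fixed neighborhood $U_\mathcal E$ of the distinguished end. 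The small constant $b_s$ and the final shift $u_{s,\tau}=(u_s+\tau)/(1+\tau)$ then control the sign of $R(\tilde g)$: the paper does \emph{not} attempt to make $R(\tilde g)\equiv0$ globally (which is what your equation would do if it were solvable), only on $\{r\ge3s\}$ where $\hat g^s$ is already scalar flat, while $R(\tilde g)\ge0$ everywhere else is obtained by the $\tau$-shift and $\eta_s$-cutoff. Your proposal is therefore trying to prove something stronger than is needed by a route whose key analytic step (global solvability of a linear equation with a nonnegative potential across the singular set and on ends with unknown geometry) is exactly what the paper's cutoff is designed to avoid.
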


\begin{proof}
Using $\mathcal{E}\cap S=\emptyset$, we fix neighborhoods $U_{\mathcal{E}}$ and $U_{S}$ of $\mathcal{E}$ and $S$ such that $U_{\mathcal{E}}\cap U_{S}=\emptyset$. Applying the argument of \cite[Section 3]{Lee2013} and the Sobolev embedding, there exists a family of smooth metric $g_{\delta}$ satisfying
\begin{itemize}\setlength{\itemsep}{1mm}
\item[(a1)] as $\delta\to0$, $g_{\delta}$ converges to $g$ in the $C_{\mathrm{loc}}^{\alpha}(M)$ sense for some $\alpha\in (0,1)$;
\item[(a2)] $g_{\delta}=g$ in $M-U_{S}$.
\end{itemize}
Let $\xi:\mathbb{R}\to[0,1]$ be an one-variable function such that
\begin{equation}
\xi \equiv 0 \ \text{in $(-\infty,2]$}, \ \
\xi \equiv 1 \ \text{in $[3,+\infty)$}.
\end{equation}
Set $m=m(M,g,\mathcal{E})$ and write $g$ and $g_{\delta}$ as follows:
\begin{equation}
g = \left(1+\frac{m}{r^{n-2}}\right)^{\frac{4}{n-2}}g_{\mathrm{euc}}+h, \ \
g_{\delta} = \left(1+\frac{m}{r^{n-2}}\right)^{\frac{4}{n-2}}g_{\mathrm{euc}}+h_{\delta}
\ \ \text{in $\mathcal{E}$}.
\end{equation}
For sufficiently large constant $s>1$ to be determined later, we define
\begin{equation}
\hat{g}^{s} = \left(1+\frac{m}{r^{n-2}}\right)^{\frac{4}{n-2}}g_{\mathrm{euc}}+\left(1-\xi\left(\frac{r}{s}\right)\right)h
\end{equation}
and
\begin{equation}
\hat{g}_{\delta}^{s} = \left(1+\frac{m}{r^{n-2}}\right)^{\frac{4}{n-2}}g_{\mathrm{euc}}+\left(1-\xi\left(\frac{r}{s}\right)\right)h_{\delta}.
\end{equation}
In $\mathcal{E}$, it is clear that
\begin{equation}
\hat{g}_{\delta}^{s} = \left(1+\frac{m}{r^{n-2}}\right)^{\frac{4}{n-2}}g_{\mathrm{euc}}  \ \text{in $\{r\geq 3s\}$}, \ \
\hat{g}_{\delta}^{s} = g_{\delta} \ \text{in $\{r\leq 2s\}$}.
\end{equation}
So the metric $\hat{g}_{\delta}^{s}$ can be extended to the whole manifold $M$ by defining $\hat{g}_{\delta}^{s}=g_{\delta}$ in $M-\mathcal{E}$.

In the following argument, we say a constant is uniform if it is independent of $\delta$ and $s$. By $U_{\mathcal{E}}\cap U_{S}=\emptyset$ and (a2), we obtain $g_{\delta}=g$ in $U_{\mathcal{E}}$ and hence $\hat{g}_{\delta}^{s}=\hat{g}^{s}$ in $U_{\mathcal{E}}$. The definition of $\hat{g}^{s}$ shows that $\hat{g}^{s}$ are uniformly equivalent to $g$ in $U_{\mathcal{E}}$, i.e.,
\begin{equation}
C^{-1}g \leq \hat{g}_{\delta}^{s} = \hat{g}^{s} \leq Cg \ \ \text{in $U_{\mathcal{E}}$}
\end{equation}
for some uniform constant $C$. This implies that the Sobolev inequality holds for $\hat{g}_{\delta}^{s}$ in $U_{\mathcal{E}}$ with a uniform Sobolev constant $c_{sob}$, i.e.,
\begin{equation}
c_{sob}\left(\int_{U_{\mathcal{E}}}|\vp|^{\frac{2n}{n-2}}d\mu_{\hat{g}_{\delta}^{s}}\right)^{\frac{n-2}{n}}
\leq \int_{U_{\mathcal{E}}}|\nabla\vp|^{2}d\mu_{\hat{g}_{\delta}^{s}}
\end{equation}
for any $\vp\in C^{\infty}(M)$ with compact support in $\overline{U}_{\mathcal{E}}$.

Choose another one-variable function $\eta:\mathbb{R}\to[0,1]$ such that
\begin{equation}
\eta \equiv 0 \ \text{in $(-\infty,1]\cup[4,\infty)$}, \ \
\eta \equiv 1 \ \text{in $[2,3]$}.
\end{equation}
Denote $\eta_{s}(x)=\eta(r(x)/s)$. Direct calculation using the decay assumption (cf. \cite[(17) and (18)]{Zhu2022A}) shows that there exists a constant $b_{s}$ (independent of $\delta$) such that
\begin{equation}
\left(\int_{U}\big|(R(\hat{g}_{\delta}^{s})\eta_{s}-b_{s}\eta_{s})_{-}\big|^{\frac{n}{2}}\right)^{\frac{2}{n}} \leq \frac{c_{sob}}{2}
\end{equation}
and
\begin{equation}
b_{s}(1+\mathcal{H}_{\hat{g}_{\delta}^{s}}^{n}(\{s\leq r\leq 4s\})) \leq s^{-1}.
\end{equation}
By \cite[Proposition 2.2]{Zhu2022A}, there is a positive function $u_{\delta,s}$ solving
\begin{equation}
\Delta_{\hat{g}_{\delta}^{s}}u_{\delta,s}-\frac{n-2}{4(n-1)}\big(R(\hat{g}_{\delta}^{s})\eta_{s}-b_{s}\eta_{s}\big)u_{\delta,s} = 0
\ \ \text{in $M$}.
\end{equation}
Similar calculation of \cite[(16)]{Zhu2022A} shows
\begin{equation}\label{Eq: A s 1}
\left(\int_{\{s\leq r\leq 4s\}}|R(\hat{g}_{\delta}^{s})|^{\frac{2n}{n+2}}d\mu_{\hat{g}_{\delta}^{s}}\right)^{\frac{n+2}{2n}}
\leq Cs^{-\sigma+\frac{n-2}{2}}.
\end{equation}
Applying \cite[(11)]{Zhu2022A}, we see that
\begin{equation}\label{L 2n n-2 estimate}
\begin{split}
& \left(\int_{U_{\mathcal{E}}}|u_{\delta,s}-1|^{\frac{2n}{n-2}}d\mu_{\hat{g}_{\delta}^{s}}\right)^{\frac{n-2}{2n}} \\
\leq {} & 2c_{sob}^{-1}\left(\int_{U_{\mathcal{E}}}\big|R(\hat{g}_{\delta}^{s})\eta_{s}-b_{s}\eta_{s}\big|^{\frac{2n}{n+2}}d\mu_{\hat{g}_{\delta}^{s}}\right)^{\frac{n+2}{2n}} \\
\leq {} & 2c_{sob}^{-1}\left(\int_{\{s\leq r\leq 4s\}}|R(\hat{g}_{\delta}^{s})|^{\frac{2n}{n+2}}d\mu_{\hat{g}_{\delta}^{s}}\right)^{\frac{n+2}{2n}}
+2c_{sob}^{-1} \cdot b_{s} \cdot \mathcal{H}_{\hat{g}_{\delta}^{s}}^{n}(\{s\leq r\leq 4s\}) \\[2mm]
\leq {} & 2c_{sob}^{-1}(Cs^{-\sigma+\frac{n-2}{2}}+s^{-1}) \leq Cs^{-\min\{1,\sigma-\frac{n-2}{2}\}}.
\end{split}
\end{equation}

Since $\hat{g}_{\delta}^{s}=\hat{g}^{s}$ in $U_{\mathcal{E}}$ and $\eta_{s}$ has support compact in $\{s\leq r\leq 4s\}$, the function
\begin{equation}
R(\hat{g}_{\delta}^{s})\eta_{s}-b_{s}\eta_{s}
= R(\hat{g}^{s})\eta_{s}-b_{s}\eta_{s}
\end{equation}
is independent of $\delta$. It is easy to verify that $(R(\hat{g}^{s})\eta_{s}-b_{s}\eta_{s})$ has a uniform $C_{\mathrm{loc}}^{2}(M)$ bound. By (a1), we have uniform $C_{\mathrm{loc}}^{\alpha}(M)$ bound of $g_{\delta}$. Using \eqref{L 2n n-2 estimate} and the Harnack inequality \cite[Theorem 8.20]{GilbargTrudinger2001}, we deduce that for any compact set $K\subset M$, we have 
\begin{equation}
\|u_{\delta,s}\|_{C^{0}(K)} \leq C(K),
\end{equation}
where $C(K)$ is a constant independent of $\delta$. Applying $C^{1,\alpha}$ estimate \cite[Theorem 8.32]{GilbargTrudinger2001}, we see that 
\begin{equation}
\|u_{\delta,s}\|_{C^{1,\alpha}(K)} \leq C(K).
\end{equation}
By passing to a subsequence, as $\delta\to0$, there exists $u_{s}\in C^{1,\alpha}(M)$ such that $u_{\delta,s}$ converges to $u_{s}$ in the $C_{\mathrm{loc}}^{1}(M)$ sense. Then $u_{s}$ is a weak solution of the following PDE:
\begin{equation}
\Delta_{\hat{g}^{s}}u_{s}-\frac{n-2}{4(n-1)}\big(R(\hat{g}^{s})\eta_{s}-b_{s}\eta_{s}\big)u_{s} = 0
\ \ \text{in $M$}.
\end{equation}
The standard elliptic theory shows $u_{s}$ is smooth in $U_{\mathcal{E}}$. By the Harnack inequality \cite[Theorem 8.20]{GilbargTrudinger2001}, we also know that $u_{s}$ is positive in $M$. Using \eqref{L 2n n-2 estimate} and Fatou's lemma,
\begin{equation}\label{Eq: A s 2}
\left(\int_{U_{\mathcal{E}}}|u_{s}-1|^{\frac{2n}{n-2}}d\mu_{\hat{g}^{s}}\right)^{\frac{n-2}{2n}}
\leq \liminf_{\delta\to0}\left(\int_{U_{\mathcal{E}}}|u_{\delta,s}-1|^{\frac{2n}{n-2}}d\mu_{\hat{g}_{\delta}^{s}}\right)^{\frac{n-2}{2n}}
\leq Cs^{-\min\{1,\sigma-\frac{n-2}{2}\}}.
\end{equation}
By the similar argument of \cite[Lemma 3.2]{SchoenYau1979C}, we obtain
\begin{equation}\label{Eq: asymptotic behavior}
u_{s} = 1+A_{s}r^{2-n}+\omega_{s},
\end{equation}
where
\begin{equation}\label{Eq: A_s}
A_{s} = -\frac{1}{4(n-1)|\mathbb{S}^{n-1}|}\int_{U_{\mathcal{E}}}\big(R(\hat{g}^{s})\eta_{s}-b_{s}\eta_{s}\big)u_{s}d\mu_{\hat{g}^{s}}.
\end{equation}
and
\begin{equation}
|\omega_{s}|+r|\de\omega_{s}|+r^{2}|\de^{2}\omega_{s}| \leq Cr^{1-n}.
\end{equation}
To estimate $A_{s}$, we apply the similar argument of \cite[(22)]{Zhu2022A} and obtain
\begin{equation}\label{Eq: A s 3}
\lim_{s\to+\infty}\left|\int_{\{s\leq r\leq 4s\}}R(\hat{g}^{s})\eta_{s}d\mu_{\hat{g}^{s}}\right| = 0.
\end{equation}
Define $v_{s}=u_{s}-1$ and then
\begin{equation*}
\begin{split}
& 4(n-1)\cdot|\mathbb{S}^{n-1}|\cdot|A_{s}| \leq \left|\int_{U_{\mathcal{E}}}R(\hat{g}^{s})\eta_{s}u_{s}d\mu_{\hat{g}^{s}}\right|
+\left|\int_{U_{\mathcal{E}}}b_{s}\eta_{s}u_{s}d\mu_{\hat{g}^{s}}\right| \\
\leq {} & \left(\int_{\{s\leq r\leq 4s\}}|R(\hat{g}^{s})|^{\frac{2n}{n+2}}d\mu_{\hat{g}^{s}}\right)^{\frac{n+2}{2n}}
\left(\int_{U_{\mathcal{E}}}|v_{s}|^{\frac{2n}{n-2}}d\mu_{\hat{g}^{s}}\right)^{\frac{n-2}{2n}}
+\left|\int_{\{s\leq r\leq 4s\}}R(\hat{g}^{s})\eta_{s}d\mu_{\hat{g}^{s}}\right|  \\
& +b_{s}\mathcal{H}_{\hat{g}_{\delta}^{s}}^{n}(\{s\leq r\leq 4s\})
+b_{s}\left[\mathcal{H}_{\hat{g}_{\delta}^{s}}^{n}(\{s\leq r\leq 4s\})\right]^{\frac{n+2}{2n}}
\left(\int_{U_{\mathcal{E}}}|v_{s}|^{\frac{2n}{n-2}}d\mu_{\hat{g}^{s}}\right)^{\frac{n-2}{2n}}.
\end{split}
\end{equation*}
Combining this with \eqref{Eq: A s 1}, \eqref{Eq: A s 2} and \eqref{Eq: A s 3}, we see that
\begin{equation}
\lim_{s\to+\infty}A_{s} = 0.
\end{equation}
Choosing $s$ sufficiently large, we may assume that $|A_{s}|\leq\e$ (this fixes the value of $s$).

For sufficiently small $\tau>0$ to be determined later, define
\begin{equation}
u_{s,\tau} = \frac{u_{s}+\tau}{1+\tau}, \ \ \tilde{g} = (u_{s,\tau})^{\frac{4}{n-2}}\hat{g}^{s}.
\end{equation}
We claim that $\tilde{g}$ is a complete metric satisfying (c1)-(c4). Recall that $u_{s}>0$ in $M$. The completeness of $\tilde{g}$ and (c1) follow from $u_{s,\tau}\geq\tau(1+\tau)^{-1}$ and $u_{s}\in C^{1}(M)$. In $M-S$, direct calculation shows
\begin{equation}\label{tilde g scalar}
\begin{split}
R(\tilde{g}) = {} & (u_{s,\tau})^{-\frac{n+2}{n-2}}\left(-\frac{4(n-1)}{n-2}\Delta_{\hat{g}^{s}}u_{s,\tau}+R(\hat{g}^{s})u_{s,\tau}\right) \\
= {} & (u_{s,\tau})^{-\frac{n+2}{n-2}}(1+\tau)^{-1}\Big((1-\eta_{s})R(\hat{g}^{s})u_{s}+b_{s}\eta_{s}u_{s}+R(\hat{g}^{s})\tau\Big).
\end{split}
\end{equation}
When $r\leq2s$ or $r\geq3s$, we have  $R(\hat{g}^{s})\geq0$ and hence $R(\tilde{g})\geq0$. When $2s\leq r\leq3s$, $R(\hat{g}^{s})$ might be negative. So we use the fact that $\eta_{s}=1$ to write
\begin{equation}
R(\tilde{g}) = (u_{s,\tau})^{-\frac{n+2}{n-2}}(1+\tau)^{-1}\Big(b_{s}u_{s}+R(\hat{g}^{s})\tau\Big).
\end{equation}
In $\{2s\leq r\leq3s\}$, since $u_{s}$ has a positive lower bound, then we choose sufficiently small $\tau$ such that $R(\tilde{g})\geq0$, this proves (c2).

In $\{r\geq3s\}$, $\hat{g}^{s}=\left(1+\frac{m}{r^{n-2}}\right)^{\frac{4}{n-2}}g_{\mathrm{euc}}$. Then we have $R(\hat{g}^{s})\equiv0$ and \eqref{tilde g scalar} shows $R(\tilde{g})\equiv0$. Set $\tilde{u} = u_{s,\tau}\left(1+\frac{m}{r^{n-2}}\right)$ and so $\tilde{g}=\tilde{u}^{\frac{4}{n-2}}g_{\mathrm{euc}}$ around $\infty$ of $\mathcal{E}$. We obtain (c3). On the other hand, using \eqref{Eq: asymptotic behavior},
\begin{equation}
\begin{split}
\tilde{g} = {} & \left( \frac{u_{s}+\tau}{1+\tau}\cdot\left(1+\frac{m}{r^{n-2}}\right)\right)^{\frac{4}{n-2}}g_{\mathrm{euc}} \\
= {} & \left(1+\frac{m}{r^{n-2}}+\frac{A_{s}}{1+\tau}\cdot\frac{1}{r^{n-2}}\right)^{\frac{4}{n-2}}g_{\mathrm{euc}}+\tilde{h},
\end{split}
\end{equation}
where
\begin{equation}
|\tilde{h}|+r|\de\tilde{h}|+r^{2}|\de^{2}\tilde{h}| \leq Cr^{1-n}.
\end{equation}
Then
\begin{equation}
m(M,\tilde{g},\mathcal{E}) = m+\frac{A_{s}}{1+\tau}
\end{equation}
and so
\begin{equation}
|m(M,\tilde{g},\mathcal{E})-m(M,g,\mathcal{E})| \leq |A_{s}| \leq \e,
\end{equation}
which shows (c4).
\end{proof}

The next proposition is a modification of Lohkamp's compactification \cite{Lohkamp1999}, which reduces the positive mass theorem to the non-existence problem on torus.
\begin{prop}[Lohkamp compactification]\label{Prop: compactification}
If $(M,\tilde g,\mathcal E)$ is an asymptotically flat manifold with $C^0$ arbitrary ends satisfying conditions (c1), (c2) and (c3), and its ADM mass $m(M,g,\mathcal E)$ is negative, then we can construct a complete Riemannian manifold $(\hat M,\hat g)$ such that there are three bounded open subsets $U_0\subset U_1\subset U_2$ such that
\begin{enumerate}\setlength{\itemsep}{1mm}
\item the metric $\hat{g}$ is in $W^{1,p}_{\mathrm{loc}}(\hat{M})\cap C^\infty(\hat{M}-S)$, where $S$ is a closed subset of $\hat{M}$ disjoint from $\bar U_2$ with $\mathcal{H}_{\mathrm{loc}}^{n-\frac{p}{p-1}}(S)<\infty$ if $n<p<\infty$ or $\mathcal{H}^{n-1}(S)=0$ if $p=\infty$;
\item $R(\hat g)\geq 0$ in $M-S$;
\item $U_0$ is diffeomorphic to $\mathbb{T}^n-B$, where $B$ is a ball in $\mathbb{T}^{n}$;
\item $\hat D_0:=\dist_{\hat g}(\partial U_2,U_1)>0$;
\item $R(\hat g)>0$ in $\mathcal A:=\bar U_2-U_1$.
\end{enumerate}
\end{prop}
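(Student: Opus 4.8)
\noindent\emph{Proof idea (sketch).} The plan is to produce $(\hat M,\hat g)$ in two moves. First I would flatten the conformal factor of $\tilde g$ near infinity of $\mathcal E$ by a \emph{minimum of harmonic functions}; this converts the negative mass into a localized bump of positive scalar curvature while keeping $R\geq0$ and rendering the metric exactly Euclidean outside a compact set. Then I would cap off the resulting Euclidean end by a flat punctured torus, following Lohkamp \cite{Lohkamp1999}.

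\textbf{Step 1 (flattening the conformal factor).} By (c3) there is $R_*\geq1$ with $\tilde g=\tilde u^{\frac{4}{n-2}}g_{\mathrm{euc}}$ and $\Delta\tilde u=0$ on $\{r\geq R_*\}\subset\mathcal E$, where $\Delta$ is the Euclidean Laplacian and $\tilde u>0$. Expanding the harmonic function $\tilde u$ (which tends to $1$ at infinity) in spherical harmonics gives $\tilde u=1+\tilde m\,r^{2-n}+O(r^{1-n})$, where $\tilde m=m(M,\tilde g,\mathcal E)<0$ by hypothesis (the identification of the leading coefficient with the ADM mass is exactly as in the proof of Proposition~\ref{Prop: density}). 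Hence for $r_0>R_*$ large enough, $\tilde u<1$ on $\{r\geq r_0\}$ and $\sup_{\{r=r_0\}}\tilde u<1$. Using Sard's theorem I would fix a regular value $c$ of $\tilde u$ with $\sup_{\{r=r_0\}}\tilde u<c<1$; then $\Gamma:=\{r\geq r_0\}\cap\{\tilde u=c\}$ is a nonempty compact smooth hypersurface with $|\nabla\tilde u|>0$ along it, separating $\{r=r_0\}$ (where $\tilde u<c$) from infinity (where $\tilde u\to1>c$). Let $\mathcal U\subset\{r>r_0\}$ be a small tubular neighborhood of $\Gamma$, and pick $r_1>r_0$ with $\mathcal U\subset\{r<r_1\}$ and $\tilde u>c$ on $\{r\geq r_1\}$. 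Define $\hat u$ on $\{r\geq r_0\}$ to be $\min\{\tilde u,c\}$, mollified with a small radius only inside $\mathcal U$ (where $\min\{\tilde u,c\}$ fails to be smooth); thus $\hat u>0$ is smooth, $\hat u=\tilde u$ near $\{r=r_0\}$, and $\hat u\equiv c$ on $\{r\geq r_1\}$. Since $\tilde u$ and the constant $c$ are harmonic, $\min\{\tilde u,c\}=\tfrac12(\tilde u+c)-\tfrac12|\tilde u-c|$ is superharmonic with distributional Laplacian equal to minus $|\nabla\tilde u|$ times the induced $(n-1)$-Hausdorff measure of $\Gamma$, a nonpositive measure; after mollification $\Delta\hat u\leq0$ on $\{r\geq r_0\}$ and $\Delta\hat u<0$ on $\mathcal U$. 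Setting $\hat g_1:=\hat u^{\frac{4}{n-2}}g_{\mathrm{euc}}$ on $\{r\geq r_0\}$ and $\hat g_1:=\tilde g$ on $M\setminus\{r>r_0\}$ (these agree near $\{r=r_0\}$), the conformal identity $R(\hat g_1)=-\tfrac{4(n-1)}{n-2}\hat u^{-\frac{n+2}{n-2}}\Delta\hat u$ shows $\hat g_1\in W^{1,p}_{\mathrm{loc}}(M)\cap C^\infty(M-S)$, $R(\hat g_1)\geq0$ on $M-S$, $R(\hat g_1)>0$ on $\mathcal U$, while $\hat g_1=c^{\frac{4}{n-2}}g_{\mathrm{euc}}$ is flat on $\{r\geq r_1\}$.

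\textbf{Step 2 (capping off and choosing $U_0\subset U_1\subset U_2$).} I would fix $R_1>r_1$, so that $\{r_1\leq r\leq R_1\}$ carries the flat metric $c^{\frac{4}{n-2}}g_{\mathrm{euc}}$, take a flat torus $(\mathbb T^n,g_0)$ with metric rescaled by $c^{\frac{4}{n-2}}$ and large enough that a geodesic ball $B$ has a flat collar isometric to this annulus, and glue $\mathbb T^n\setminus\bar B$ to $M\setminus\{r>R_1\}$ along $\{r=R_1\}\approx\partial B$ matching the collars. The result $(\hat M,\hat g)$ is a complete orientable Riemannian manifold: the arbitrary ends of $M$ (hence completeness) are untouched, the added piece is compact, and the gluing is smooth because both sides equal $c^{\frac{4}{n-2}}g_{\mathrm{euc}}$ near the seam; moreover $\hat g\in W^{1,p}_{\mathrm{loc}}(\hat M)\cap C^\infty(\hat M-S)$ and $R(\hat g)\geq0$ on $\hat M-S$, which is (2). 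Let $U_0$ be the interior of the glued $\mathbb T^n\setminus\bar B$, so $U_0\approx\mathbb T^n-B$ with compact closure, which is (3). The hypersurface $\Gamma$ separates $\hat M$ into an inner region (containing $M-\mathcal E$ and all arbitrary ends) and a bounded outer region containing $U_0$ and the capped annulus, with $R(\hat g)>0$ on the neighborhood $\mathcal U\supset\Gamma$. Using Sard again I would pick regular values $c''<c<c'$ close enough to $c$ that the shell $\mathcal A_0:=\{c''\leq\tilde u\leq c'\}\cap\{r\geq r_0\}$ lies in $\mathcal U$, let $\Gamma^{+}=\{\tilde u=c'\}\cap\{r\geq r_0\}$ and $\Gamma^{-}=\{\tilde u=c''\}\cap\{r\geq r_0\}$ be its (disjoint, closed, smooth) boundary hypersurfaces, and set $U_1$ (resp. $U_2$) to be the bounded open subset of $\hat M$ lying strictly beyond $\Gamma^{+}$ (resp. $\Gamma^{-}$), i.e. on the $U_0$-side. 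Then $U_0\subset U_1\subset U_2$ are bounded open, $\partial U_2=\Gamma^{-}$, $\bar U_2-U_1=\mathcal A_0\subset\mathcal U$ so that $R(\hat g)>0$ there, which is (5), and $\hat D_0=\dist_{\hat g}(\Gamma^{-},U_1)\geq\dist_{\hat g}(\Gamma^{-},\Gamma^{+})>0$, which is (4). Finally $S\subset M-\mathcal E$ is closed in $\hat M$, disjoint from $\bar U_2$ (which is contained in $\mathcal E$ together with the glued torus piece, both avoiding $S$), and its local Hausdorff measure is unchanged since $\hat g=\tilde g$ near $S$; this is (1), completing the argument.

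\textbf{The main obstacle} will be Step 1: deforming a scalar-flat conformally flat end to be \emph{exactly} Euclidean outside a compact set while retaining $R\geq0$ and producing $R>0$ somewhere. A compactly supported smooth perturbation of $\tilde u$ cannot achieve this, because $\Delta\hat u\leq0$ together with $\int\Delta(\hat u-\tilde u)=0$ forces $\Delta\hat u\equiv0$. The minimum-with-a-constant trick works precisely because it also alters the metric at infinity (replacing $\tilde u\to1$ by $\hat u\equiv c<1$): the negative ``corner charge'' of $\min\{\tilde u,c\}$ along $\Gamma$ is compensated by the deficit at infinity, while $\min$ of harmonic functions is automatically superharmonic. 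This is exactly where $\tilde m<0$ enters — it is what forces $\tilde u<1$ near infinity, so that the constant $c<1$ genuinely truncates $\tilde u$, $\Gamma$ is nonempty, and the truncation produces the positive scalar curvature bump required by (4)–(5).
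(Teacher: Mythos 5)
Your proof is correct and follows essentially the same route as the paper: flatten the conformally flat end to a genuinely Euclidean one by truncating the harmonic conformal factor $\tilde u$ at a level $c<1$ (possible because $\tilde m<0$ forces $\tilde u<1$ near infinity, which is indeed the crux you flag at the end), observe that the truncation is superharmonic and so $R\geq0$ is preserved with $R>0$ along the bending region, then cap off the resulting flat end with a flat punctured torus and read off $U_0\subset U_1\subset U_2$ from level sets of $\tilde u$. The only real difference is implementation: the paper replaces $\tilde u$ by $\zeta\circ\tilde u$ for a fixed smooth, increasing, concave cutoff $\zeta$ that equals the identity on $[0,1-3\e]$ and the constant $1-2\e$ on $[1-\e,\infty)$, so that $\Delta(\zeta\circ\tilde u)=\zeta''(\tilde u)|\nabla\tilde u|^2\leq 0$ directly, with strict inequality exactly where $\zeta''<0$; you instead take the mollified $\min\{\tilde u,c\}$, which is the $C^0$ degenerate limit of the paper's $\zeta$ and realizes the same superharmonicity. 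The paper's version is a little cleaner — it avoids the bookkeeping you gloss over about matching the mollification radius, the tubular neighborhood $\mathcal U$, and the choice of $c'',c'$ so that $\bar U_2-U_1$ actually lies inside the set where $\Delta\hat u<0$ (not all of $\mathcal U$) — and it dispenses with Sard by using the expansion $\partial_r\tilde u=(2-n)\tilde m\, r^{1-n}+O(r^{-n})>0$ to guarantee $\nabla\tilde u\neq0$ on the whole outer region. But these are minor; your argument is sound.
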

\begin{proof}
We apply the idea of Lohkamp compactification from \cite[Proposition 6.1]{Lohkamp1999}. From condition (c3) and negative ADM mass, we see that the conformal factor $\tilde u$ is a harmonic function in an exterior region of the Euclidean space such that
\begin{equation}
\tilde u=1+Ar^{2-n}+O(r^{1-n})\mbox{ with }A<0.
\end{equation}
So we can take $s_1$ large enough such that $\tilde u$ is harmonic, $\tilde u<1$ and $\nabla \tilde u\neq 0$ in $\{r\geq s_1\}$. Denote
\begin{equation}
\e=\frac{1}{4}\left(1-\sup_{r(x)=s_1}\tilde u(x)\right).
\end{equation}
It is clear that $\tilde u>1-\e$ in $\{r\geq s_2\}$ for sufficiently large $s_2>s_1$. Take a cutoff function $\zeta:[0,+\infty)\to [0,1-2\e]$ such that $\zeta(t)=t$ when $t\leq 1-3\e$ and $\zeta(t)=1-2\e$ when $t\geq 1-\e$. Moreover, we can also require $\zeta'\geq 0$ and $\zeta''\leq 0$ in $[0,+\infty)$ as well as $\zeta''<0$ in $(1-3\e,1-\e)$. Such a cutoff function is illustrated as in Figure \ref{Fig1}.

\begin{figure}[htbp]
\centering
\includegraphics[width=7cm]{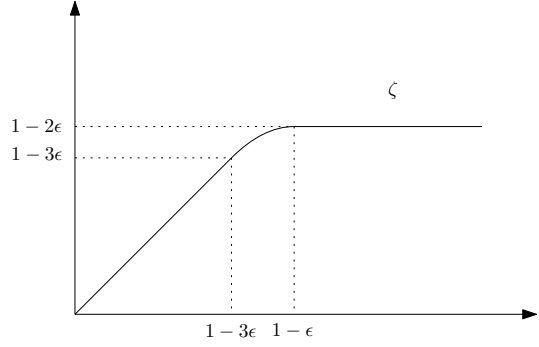}
\caption{The cutoff function $\zeta$}
\label{Fig1}
\end{figure}

Let
\begin{equation}
\begin{split}
\tilde g_{\mathrm{bend}}=\left\{
\begin{array}{cc}
(\zeta\circ \tilde u)^{\frac{4}{n-2}}g_{\mathrm{euc}},&r\geq s_1;\\[2mm]
\tilde g,& \mbox{otherwise}.
\end{array}\right.
\end{split}
\end{equation}
Clearly $\tilde g_{\mathrm{bend}}$ has the same regularity as $\tilde g$ and $\tilde g$ is just the Euclidean metric around the infinity of $\mathcal E$. Since $\tilde u$ is harmonic in $\{r\geq s_1\}$, then
\begin{equation}
R(\tilde g_{\mathrm{bend}})=-\frac{4(n-1)}{n-2}\cdot\tilde u^{-\frac{n+2}{n-2}}\cdot|\nabla\tilde u|^2\cdot(\zeta''\circ \tilde u) \ \mbox{ when }r\geq s_1.
\end{equation}
As a consequence we have $R(g_{\mathrm{bend}})>0$ in $\{r\geq s_1\}\cap\{1-3\e<\tilde u<1-\e\}$. Let us take
\begin{equation}
V_2=\{r>s_1\}\cap\left\{\tilde u>1-\frac{5}{2}\e\right\}
\end{equation}
and
\begin{equation}
V_1=\{r>s_1\}\cap \left\{\tilde u>1-\frac{3}{2}\e\right\}.
\end{equation}
Then $R(\tilde g_{\mathrm{bend}})>0$ in $\bar V_2-V_1$. Take $s_3>s_1$ large enough such that $V_0:=\{r>s_3\}$ is contained in $\{\tilde u>1-\e\}\subset V_1$. After gluing opposite faces of the cube
\begin{equation}
C_{2s_3}:=\{x:-2s_3\leq x_i\leq 2s_3,\,i=1,2,\ldots,n\}
\end{equation}
we can close the manifold $M$ in the end $\mathcal E$ and let us denote the new manifold by $\hat M$. Let $\tilde M_{2s_3}:=\tilde M-\{x\in \mathcal E: |x_i|>2s_3\mbox{ for some }i\}$ and we denote the gluing map by $\Phi: \tilde M_{2s_3}\to \hat M$. Define $\hat g=\Phi_*\tilde g$ and
\begin{equation}
U_i=\Phi(V_i\cap C_{2s_3})\mbox{ for }i=0,1,2.
\end{equation}
Now it is easy to verify that $(\hat M,\hat g)$ associated with $U_0\subset U_1\subset U_2$ satisfies all our requirements.
\end{proof}

The next proposition shows that we can perturb the rough metric slightly so that it becomes a smooth metric while the shielding estimate is almost preserved.

\begin{prop}[Local smoothing keeping shielding condition]\label{Prop: smoothing}
Let $(N,\hat g)$ be a compact Riemannian manifold with boundary associated with a finite open exhaustion
\begin{equation}
\mathbb{T}^n-B\approx U_0\subset U_1\subset U_2\subset N
\end{equation}
where $B$ is a ball in $\mathbb{T}^{n}$. Assume that
\begin{enumerate}\setlength{\itemsep}{1mm}
\item the metric $\hat{g}$ is in $W^{1,p}_{\mathrm{loc}}(N)\cap C^\infty(N-S)$, where $S$ is a closed subset of $N$ disjoint from $\bar U_2$ with $\mathcal{H}_{\mathrm{loc}}^{n-\frac{p}{p-1}}(S)<\infty$ if $n<p<\infty$ or $\mathcal{H}^{n-1}(S)=0$ if $p=\infty$;
\item $R(\hat g)\geq 0$ in $N-S$;
\item the shielding condition holds:
\begin{equation}\label{Eq: shielding condition}
\inf_{\bar{\mathcal A}}R(\hat g)\cdot \hat D_0 \cdot \hat D_1\geq \alpha_0>0,
\end{equation}
where $\mathcal A=U_2-\bar U_1$, $\hat D_0=\dist_{\hat g}(\partial U_2,U_1)$ and $\hat D_1=\dist_{\hat g}(\partial N,U_2)$.
\end{enumerate}
Then for any $\e>0$, we can find a smooth metric $\bar g$ such that $(N,\bar g)$ has $\mathbb{T}^1$-stablized scalar curvature lower bound $\sigma\geq 0$ such that
\begin{equation}
\inf_{\bar{\mathcal A}}\sigma\cdot \bar D_0 \cdot \bar D_1\geq (1-\e)\alpha_0,
\end{equation}
where
\begin{equation}
\bar D_0=\dist_{\bar g}(\partial U_2,U_1)\mbox{ and }\bar D_1=\dist_{\bar g}(\partial N,U_2).
\end{equation}
\end{prop}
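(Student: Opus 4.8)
The plan is to combine a local mollification of $\hat g$ near $S$ with the warping trick of Fischer--Colbrie--Schoen \cite{FischerColbrieSchoen1980} in place of a conformal change: if $g_\delta$ denotes a smoothing of $\hat g$ and we take $\bar g:=g_\delta$ (so that the \emph{base} metric, and with it every distance $\bar D_0,\bar D_1$, is literally the mollified metric), then all that is left to arrange is a positive function $u$ on $N$ making the warped metric $\bar g+u^2\,d\theta^2$ on $N\times\mathbb T^1$ have nonnegative scalar curvature. This is exactly where warping beats conformal deformation: a conformal factor distorts distances in a way we cannot control near $S$, whereas warping changes only the $\mathbb T^1$-direction and leaves the shielding geometry on $N$ untouched, reducing the stability of the product $\inf_{\bar{\mathcal A}}\sigma\cdot\bar D_0\cdot\bar D_1$ to the elementary $C^0$-continuity of Riemannian distance.

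\emph{Step 1: mollification.} Choosing a relatively open $U_S\supset S$ with $\overline U_S$ compact and $\overline U_S\cap\bar U_2=\emptyset$ (possible since $S$ and $\bar U_2$ are disjoint compacta in the compact $N$), I would apply the standard smoothing construction (\cite{Lee2013}; see also \cite[\S 3]{JiangShengZhang2020} and \cite[\S 3]{Zhu2022A}) inside $U_S$ to obtain, for each small $\delta>0$, a smooth metric $g_\delta$ on $N$ with $g_\delta\equiv\hat g$ on $N-U_S$, with $g_\delta\to\hat g$ in $C^0(N)$ as $\delta\to0$ (hence $g_\delta$ is a genuine metric for $\delta$ small and $\dist_{g_\delta}\to\dist_{\hat g}$ uniformly on $N$), and such that $(R(g_\delta))_-$ is supported in $\overline U_S$ with
\[
\big\|(R(g_\delta))_-\big\|_{L^{n/2}(N)}\ \longrightarrow\ 0\qquad(\delta\to0).
\]
I expect this last estimate to be the main technical point: it is precisely the Hausdorff-dimension hypothesis on $S$ ($\mathcal H^{\,n-p/(p-1)}_{\mathrm{loc}}(S)<\infty$ for $n<p<\infty$, resp. $\mathcal H^{n-1}(S)=0$ for $p=\infty$) that forces the negative part of the smoothed scalar curvature to be small not merely in $L^1$ but in $L^{n/2}$, and this is the ingredient to be imported from the low-regularity removable-singularity theory of \cite{JiangShengZhang2020,Lee2013}.

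\emph{Step 2: warping.} Enlarge $N$ to a compact $N'$ with $N\Subset\Int N'$, extend $g_\delta$ smoothly with $(R(g_\delta))_-$ still supported in $\overline U_S\subset N$, and let $u_\delta>0$ be the first Dirichlet eigenfunction on $(N',g_\delta)$ of
\[
-\Delta_{g_\delta}u_\delta-\tfrac12(R(g_\delta))_-\,u_\delta\ =\ \mu_\delta\,u_\delta ,
\]
which is smooth and strictly positive on $\Int N'\supset N$. Since the Sobolev constant of $(N',g_\delta)$ stays bounded as $g_\delta\to\hat g$ in $C^0$, a Rayleigh-quotient estimate using the smallness from Step 1 gives $\mu_\delta\geq 0$ (in fact $>0$) for $\delta$ small, the potential $-\tfrac12(R(g_\delta))_-$ shifting the positive first Dirichlet eigenvalue of $-\Delta_{g_\delta}$ on $N'$ by an amount controlled by $\|(R(g_\delta))_-\|_{L^{n/2}}$. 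Put $\bar g:=g_\delta|_N$; then on $N\times\mathbb T^1$,
\[
R\big(\bar g+u_\delta^2\,d\theta^2\big)\ =\ R(g_\delta)-\frac{2\Delta_{g_\delta}u_\delta}{u_\delta}\ =\ R(g_\delta)+(R(g_\delta))_-+2\mu_\delta\ =\ (R(g_\delta))_+ +2\mu_\delta\ =:\ \sigma ,
\]
which is continuous and $\geq 2\mu_\delta\geq 0$. Hence $(N,\bar g)$ has $\mathbb T^1$-stabilised scalar curvature lower bound $\sigma\geq0$.

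\emph{Step 3: stability of the shielding product.} On $\bar{\mathcal A}\subset\bar U_2$ one has $g_\delta=\hat g$, and since $\bar{\mathcal A}\cap S=\emptyset$ forces $R(\hat g)\geq0$ there, $(R(g_\delta))_+=R(\hat g)$; therefore $\sigma=R(\hat g)+2\mu_\delta\geq R(\hat g)$ on $\bar{\mathcal A}$, so $\inf_{\bar{\mathcal A}}\sigma\geq\inf_{\bar{\mathcal A}}R(\hat g)$. Combining with $\bar D_0=\dist_{g_\delta}(\partial U_2,U_1)\to\hat D_0$, $\bar D_1=\dist_{g_\delta}(\partial N,U_2)\to\hat D_1$ and the shielding hypothesis $\inf_{\bar{\mathcal A}}R(\hat g)\cdot\hat D_0\cdot\hat D_1\geq\alpha_0$,
\[
\inf_{\bar{\mathcal A}}\sigma\cdot\bar D_0\cdot\bar D_1\ \geq\ \inf_{\bar{\mathcal A}}R(\hat g)\cdot\bar D_0\cdot\bar D_1\ \longrightarrow\ \inf_{\bar{\mathcal A}}R(\hat g)\cdot\hat D_0\cdot\hat D_1\ \geq\ \alpha_0 ,
\]
so taking $\delta$ small enough (depending on $\e$) yields $\inf_{\bar{\mathcal A}}\sigma\cdot\bar D_0\cdot\bar D_1\geq(1-\e)\alpha_0$, as required. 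The genuinely delicate step is Step 1; Steps 2 and 3 are then soft, the only subtlety in Step 2 being the passage to $N'$ so that the Dirichlet eigenfunction is positive up to $\partial N$.
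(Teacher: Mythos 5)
Your proposal is essentially the same approach as the paper's: mollify $\hat g$ only near $S$ and then repair scalar curvature by warping with a first eigenfunction of a Schr\"odinger operator whose potential localizes the negative part of $R$, exactly as in the warping trick of Fischer--Colbrie--Schoen. The paper packages the key smallness differently --- it invokes the quadratic-form lower bound
\begin{equation*}
\int_N R(\bar g)\phi^2\,d\mu_{\bar g}\ \geq\ -\e\int_N|\nabla_{\bar g}\phi|^2\,d\mu_{\bar g}\qquad(\phi|_{\partial N}=0)
\end{equation*}
directly from \cite[Lemma 2.6, Remark 2.3, Lemma 2.7]{JiangShengZhang2020}, rather than the $L^{n/2}$-smallness of $(R(g_\delta))_-$; the two are interchangeable via Sobolev, so this is a cosmetic difference. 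The paper's eigenvalue problem uses the potential $\tfrac{\eta^2}{2}R(\bar g)$ with a cutoff $\eta$ that is $\equiv 1$ near $S$ and $\equiv 0$ in $U_2$, so the resulting warped scalar curvature is $(1-\eta^2)R(\bar g)+2\lambda$, which on $\bar{\mathcal A}$ reduces to $R(\hat g)+2\lambda$; your $(R(g_\delta))_+ + 2\mu_\delta$ achieves the same thing since $g_\delta\equiv\hat g$ and $R(\hat g)\geq 0$ there. The only substantive point where you go beyond the paper is the passage to a collar extension $N\Subset N'$ before solving the Dirichlet problem, which guarantees strict positivity of the warping function up to and including $\partial N$. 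The paper solves the Dirichlet problem directly on $N$, so its eigenfunction vanishes on $\partial N$ and the warping function is not strictly positive there, which is slightly at odds with the stated definition of $\mathbb T^1$-stabilized scalar curvature lower bound (though harmless for the subsequent $\mu$-bubble argument of Theorem \ref{Thm: Geroch conjecture}, which takes place strictly away from $\partial N$). Your extension trick is the cleaner way to dispose of this.
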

\begin{proof}
Fix an open neighborhood $U_S$ of $S$ whose closure $\bar U_S$ is disjoint from $\bar U_2$. For any small $\e>0$, it follows from \cite[Lemma 2.6, Remark 2.3 and Lemma 2.7]{JiangShengZhang2020} and the classical mollification method that we can construct a smooth approximation metric $\bar g$ such that
\begin{enumerate}\setlength{\itemsep}{1mm}
\item[(i)] $\bar g$ coincides with $\hat g$ outside $U_S$;
\item[(ii)] $ (1-\e/2)\hat g\leq \bar g\leq (1+\e/2)\hat g$ as quadratic forms;
\item[(iii)] for any smooth function $\phi$ vanishing on $\partial N$ we have
\begin{equation}
\int_{N}R(\bar g)\phi^2 d\mu_{\bar g}\geq -\e \int_M|\nabla_{\bar g} \phi|^2 d\mu_{\bar g}.
\end{equation}
\end{enumerate}
Take a smooth function $\eta: N\to [0,1]$ such that $\eta\equiv 1$ in $U_{S}$ and $\eta\equiv 0$ in $U_2$. Let $u$ be the first eigenfunction such that
\begin{equation}
-\Delta_{\bar g}u+\frac{\eta^2}{2}R(\bar g)u=\lambda u\mbox{ in }N\mbox{ and }u=0\mbox{ on }\partial N.
\end{equation}
Through integration by parts we have
\begin{equation}
\begin{split}
\lambda\int_Nu^2 d\mu_{\bar g}&=\int_{N}|\nabla_{\bar g} u|^2 d\mu_{\bar g}+\frac{1}{2}\int_NR(\bar g)(\eta u)^2 d\mu_{\bar g}.
\end{split}
\end{equation}
Notice that the function $\eta u$ vanishes on $\partial N$ and so
\begin{equation}
\begin{split}
\int_NR(\bar g)(\eta u)^2 d\mu_{\bar g}&\geq -\e\int_N|\nabla_{\bar g}(\eta u)|^2 d\mu_{\bar g}\\
&\geq -2\e\left(\int_N|\nabla_{\bar g}u|^2 d\mu_{\bar g}+\int_Nu^2|\nabla_{\bar g}\eta|^2 d\mu_{\bar g}\right).
\end{split}
\end{equation}
From the Sobolev inequality, we see
\begin{equation}
\begin{split}
\int_Nu^2|\nabla_{\bar g}\eta|^2 d\mu_{\bar g}&\leq \left(\int_N|\nabla_{\bar g}\eta|^n d\mu_{\bar g}\right)^{\frac{2}{n}} \left(\int_N u^\frac{2n}{n-2} d\mu_{\bar g}\right)^{\frac{n-2}{n}} \\
&\leq C_0\int_N|\nabla_{\bar g}u|^2 d\mu_{\bar g},
\end{split}
\end{equation}
where $C_0$ is a universal constant depending only on $(N,\hat g)$ and $\eta$. Now we arrive at
\begin{equation}
\lambda\int_Nu^2\mathrm d\mu_{\bar g}\geq (1-\e-C_0\e)\int_N|\nabla_{\bar g}u|^2 d\mu_{\bar g}.
\end{equation}
By taking $\e$ small enough, we see that the first Dirichlet eigenvalue $\lambda$ is positive. Notice that
\begin{equation}
R(\bar g+u^2 dt^2)=R(\bar g)-2u^{-1}\Delta_{\bar g}u=(1-\eta^2)R(\bar g)+2\lambda\geq 0.
\end{equation}
Denote $\sigma=(1-\eta^2)R(\bar g)+2\lambda$. Then we conclude that $(N,\bar g)$ has $\mathbb{T}^1$-stablized scalar curvature $\sigma$ and $\sigma$ satisfies
\begin{equation}
\inf_{\bar{\mathcal A}}\sigma\cdot\bar D_0 \cdot \bar D_1\geq \inf_{\bar{\mathcal A}} R(\hat g)\cdot (1-\e/2)\hat D_0\cdot(1-\e/2)\hat D_1 \geq (1-\e)\alpha_0.
\end{equation}
This completes the proof.
\end{proof}

Now, we are ready to prove the main result of positive mass theorem.
\begin{proof}[Proof of Theorem \ref{Thm: nonnegative mass}]
We argue by contradiction. Suppose that the ADM mass $m(M,g,\mathcal E)$ is negative. After choosing $\e$ small enough in Proposition \ref{Prop: density}, we can find a complete metric $\tilde g$ on $M$ such that $(M,\tilde g,\mathcal E)$ is an asymptotically flat manifold with $C^0$ arbitrary ends satisfying (c1), (c2) and (c3), whose ADM mass $m(M,\tilde g,\mathcal E)$ is still negative. Now we can apply Proposition \ref{Prop: compactification} and obtain a complete Riemannian manifold $(\hat M,\hat g)$ such that there are three bounded open subsets $U_0\subset U_1\subset U_2$ such that
\begin{enumerate}\setlength{\itemsep}{1mm}
\item the metric $\hat{g}$ is in $W^{1,p}_{\mathrm{loc}}(\hat{M})\cap C^\infty(\hat{M}-S)$, where $S$ is a closed subset of $\hat{M}$ disjoint from $\bar U_2$ with $\mathcal{H}_{\mathrm{loc}}^{n-\frac{p}{p-1}}(S)<\infty$ if $n<p<\infty$ or $\mathcal{H}^{n-1}(S)=0$ if $p=\infty$;
\item $R(\hat g)\geq 0$ in $M-S$;
\item $U_0$ is diffeomorphic to $\mathbb{T}^n-B$, where $B$ is a ball in $\mathbb{T}^{n}$;
\item $\hat D_0:=\dist_{\hat g}(\partial U_2,U_1)>0$;
\item $R(\hat g)>0$ in $\mathcal A:=\bar U_2-U_1$.
\end{enumerate}
Take $N$ to be the $\hat D_1$-neighborhood of $U_2$ such that
\begin{equation}
\alpha_0:=\inf_{\bar{\mathcal A}}R(\hat g)\cdot \hat D_0 \cdot \hat D_1>5.
\end{equation}
It follows from Proposition \ref{Prop: smoothing} that we can find a smooth metric $\bar g$ such that $(N,\bar g)$ has $\mathbb{T}^1$-stablized scalar curvature lower bound $\sigma\geq 0$ such that
\begin{equation}
\inf_{\bar{\mathcal A}}\sigma\cdot \bar D_0 \cdot \bar D_1>4,
\end{equation}
where
\begin{equation}
\bar D_0=\dist_{\bar g}(\partial U_2,U_1)\mbox{ and }\bar D_1=\dist_{\bar g}(\partial N,U_2).
\end{equation}
This leads to a contradiction to Theorem \ref{Thm: Geroch conjecture}.
\end{proof}

\section{Rigidity under vanishing mass}

In this section, we discuss the case when $m(M,g,\mathcal{E})=0$. When there is no singular set, it was shown by the third named author \cite{Zhu2022A} and Lee-Lesourd-Unger \cite{LeeLesourdUnger2022}  independently that $M$ is isometric to the standard Euclidean space. On the other hand, if the singular set is bounded in an asymptotically flat manifold, the rigidity was also obtained by Jiang-Sheng-Zhang \cite{JiangShengZhang2020}. In this work, our main objective is to force $M$ to have a single ended when the mass of an end vanishes and hence the singular set $S$ is bounded.

\subsection{Ricci flatness outside  $S$}
In this subsection, we prove the Ricci flatness away from the singularity. This is a straight forward adaption of the method developed in \cite[Theorem 1.2]{Zhu2022A} with modification from \cite{JiangShengZhang2020}.

\begin{prop}\label{Prop:ricci-flat}
Under the assumption of Theorem~\ref{intro-main}, if $m(M,g,\mathcal{E})=0$, then $\Ric(g)\equiv0$ in $M-S$.
\end{prop}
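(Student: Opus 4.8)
The plan is to run the classical scalar-curvature deformation argument of Schoen--Yau \cite{SchoenYau1979C}, adapted to arbitrary ends as in \cite{Zhu2022A,LeeLesourdUnger2022} and to the singular set as in \cite{JiangShengZhang2020}. The key input is Theorem~\ref{Thm: nonnegative mass}: \emph{every} asymptotically flat manifold with $C^0$ arbitrary ends satisfying (c1)--(c2) has nonnegative ADM mass. Since $m(M,g,\mathcal E)=0$, the vanishing mass is a minimum along any deformation of $g$ preserving (c1)--(c2) and the asymptotic flatness of $\mathcal E$, and the proposition follows by producing, whenever $\Ric(g)$ does not vanish, a deformation with strictly negative mass.

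\emph{Step 1: $R(g)\equiv 0$ on $M-S$.} Suppose $R(g)>0$ on a nonempty open subset of $M-S$. Solve, as in \cite{Zhu2022A}, the conformal normalization
\[
\Delta_g u-\frac{n-2}{4(n-1)}\,R(g)\,u=0\ \ \text{on }M,\qquad u\to 1\ \text{at }\mathcal E,
\]
and obtain a positive solution with $u=1+A\,r^{2-n}+o(r^{2-n})$ near $\mathcal E$, where $A=-\frac{1}{4(n-1)|\mathbb S^{n-1}|}\int_M R(g)\,u\,d\mu_g<0$ because $R(g)\ge 0$ is not identically zero. Then $\tilde g:=u^{\frac{4}{n-2}}g$ coincides with $g$ near $S$, is smooth on $M-S$ with $R(\tilde g)\equiv 0$ there (directly from the equation), is asymptotically flat at $\mathcal E$, and has ADM mass $m(M,\tilde g,\mathcal E)=m(M,g,\mathcal E)+A<0$ (compare the mass computation in the proof of Proposition~\ref{Prop: density}), contradicting Theorem~\ref{Thm: nonnegative mass}. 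Two technical issues must be handled: solving the equation \emph{across} $S$, where $R(g)$ may be singular --- done exactly as in \cite{JiangShengZhang2020}, using $g\in W^{1,p}_{\mathrm{loc}}$ and the Hausdorff-measure smallness of $S$ to see that $S$ is removable for the conformal Laplacian and that the solution lies in $W^{1,p}_{\mathrm{loc}}\cap C^\infty(M-S)$; and solving it \emph{globally}, on a manifold whose non-distinguished ends carry no a priori control, with $\tilde g$ positive and \emph{complete} --- done as in \cite{Zhu2022A} by solving on an exhaustion $\Omega_1\Subset\Omega_2\Subset\cdots\subset M$ with $u\equiv 1$ on $\partial\Omega_k$, using the asymptotic flatness of $\mathcal E$ and $R(g)\in L^1(\mathcal E)$ for uniform local estimates, passing to the limit, and invoking the Harnack inequality together with the shifting argument of \cite{Zhu2022A} for positivity and completeness.

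\emph{Step 2: $\Ric(g)\equiv 0$ on $M-S$.} Suppose $\Ric(g)(q_0)\ne 0$ for some $q_0\in M-S$. Pick a nonnegative cutoff $\phi$ with compact support in $M-S$ and $\phi(q_0)>0$, and set $h=-\phi\,\Ric(g)$ and $g_t=g+th$ for $|t|$ small. Then $g_t$ equals $g$ outside $\mathrm{supp}\,h$ (in particular near $\mathcal E$ and near $S$), so $m(M,g_t,\mathcal E)=m(M,g,\mathcal E)=0$, and since $R(g)\equiv 0$ (Step 1) the scalar curvature $R(g_t)$ is supported in the compact set $\mathrm{supp}\,h$ with $R(g_t)=t\,\dot R+O(t^2)$ there, where $\dot R$ denotes $\frac{d}{dt}R(g_t)$ at $t=0$ and $\int_M\dot R\,d\mu_g=\int_M\phi\,|\Ric(g)|^2\,d\mu_g>0$. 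As in Step~1 (the solvability is now routine, $u_t$ being $g_t$-harmonic off $\mathrm{supp}\,h$ and a small perturbation of $u_0\equiv 1$) we get $\tilde g_t=u_t^{\frac{4}{n-2}}g_t$ with $R(\tilde g_t)\equiv 0$ on $M-S$, still satisfying (c1)--(c2), with $u_t=1+A_t r^{2-n}+o(r^{2-n})$ near $\mathcal E$ and
\[
A_t=-\frac{1}{4(n-1)|\mathbb S^{n-1}|}\int_M R(g_t)\,u_t\,d\mu_{g_t}=-\frac{t}{4(n-1)|\mathbb S^{n-1}|}\int_M\phi\,|\Ric(g)|^2\,d\mu_g+O(t^2).
\]
Hence $m(M,\tilde g_t,\mathcal E)$ has the sign of $A_t$, so it is strictly negative for all small $t>0$, contradicting Theorem~\ref{Thm: nonnegative mass}. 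Therefore $\Ric(g)\equiv 0$ on $M-S$.

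The elliptic theory across $S$ and the variational computation of $\dot R$ and $A_t$ are routine (the former is precisely \cite{JiangShengZhang2020}); the genuine obstacle is the second point of Step~1, namely solving the global conformal equation on a manifold with uncontrolled ends while keeping the conformally deformed metric \emph{complete} --- this is the difficulty flagged in the introduction for the direct conformal approach, resolved here by importing the exhaustion-and-shifting technique of \cite{Zhu2022A}.
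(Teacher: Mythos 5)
Your overall strategy (deform to produce negative mass and contradict Theorem~\ref{intro-main}) is the same as the paper's, but Step~1 as written runs directly into the obstruction the introduction flags for unbounded $S$, and the fix is precisely the device you omit. You propose to solve the \emph{global} conformal normalization $\Delta_g u-\frac{n-2}{4(n-1)}R(g)u=0$ with the full $R(g)$ as source and then set $\tilde g=u^{4/(n-2)}g$, appealing to the exhaustion-and-shifting technique of \cite{Zhu2022A} for positivity and completeness. That technique requires $u$ to be harmonic outside a compact set, which fails here: $R(g)$ is supported on all of $M-S$ (and is only controlled in $L^1$ on $\mathcal E$, not on the other ends), and $S$ itself may be unbounded, so there is no compact set outside of which $u$ is harmonic. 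The paper states explicitly that both the maximum principle of \cite{LeeLesourdUnger2022} and the shifting argument of \cite{Zhu2022A} break down in exactly this situation; moreover your formula for $A$ integrates $R(g)u$ over all of $M$, and that integral need not converge.

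The paper's Step~1 avoids all of this by localizing. Given $z\notin S$ with $R(g)(z)>0$, one chooses a compactly supported nonnegative cutoff $\eta$ with $\eta\equiv 1$ near $z$ and $\operatorname{supp}\eta\subset U\subset M-S$, and solves $\Delta_g u-\frac{n-2}{4(n-1)}\eta R(g)u=0$. Now the source is smooth and compactly supported away from $S$ and from every arbitrary end, so $u$ is harmonic outside a fixed compact set, the expansion $u=1+Ar^{2-n}+O(r^{1-n})$ holds with $A=-\frac{1}{4(n-1)|\mathbb S^{n-1}|}\int_M\eta R(g)u\,d\mu_g<0$, and the equation near $S$ is simply the harmonic equation. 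Completeness is then obtained not by establishing a positive lower bound for $u$ but by using the conformal factor $\frac{u+1}{2}\ge\frac12$: the deformed metric is $\bar g=\bigl(\frac{u+1}{2}\bigr)^{4/(n-2)}g$, which is automatically complete. The price is that $R(\bar g)\ge 0$ rather than $\equiv 0$, but that is all Theorem~\ref{intro-main} needs. Your Step~2 has the same structural issues (you assert $u_t$ is "a small perturbation of $u_0\equiv 1$" without justification on a manifold with arbitrary ends); the paper handles it with an analogous cutoff $\bar\eta$, a small shift $\tilde b\bar\eta$ in the source, and the conformal factor $\frac{\tilde u+\tau}{1+\tau}$ for completeness. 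Your variational computation of the sign of $A_t$ is correct once these devices are in place.
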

\begin{proof}
The proof is almost identical to that of \cite[Theorem 1.2]{Zhu2022A}, see also \cite{LeeLesourdUnger2022} and \cite{LiMantoulidis2019}. We only give a sketch of it for reader's convenience.

We first show that the scalar curvature vanishes outside $S$. Suppose on the contrary, there is $z\notin S$ such that $R(g)(z)>0$. We choose a sufficiently small neighborhood $U$ around $z$ such that $R(g)>0$ in $U$.  Take a nonnegative cutoff function $\eta$ with compact support in $U$ such that $\eta=1$ around $z$. It follows from the argument in Proposition~\ref{Prop: density} (see also \cite[Proposition 2.2]{Zhu2022A}) that there is a positive function $u$ solving
\begin{equation}
\Delta_{g}u-\frac{n-2}{4(n-1)}\eta R(g)u=0
\end{equation}
with the expansion $u=1+Ar^{2-n}+O(r^{1-n})$ where $A<0$ from the formula \eqref{Eq: A_s}. Then the metric $\bar g=\left(\frac{u+1}{2} \right)^\frac{4}{n-2}g$ is a complete metric with $R(\bar g)\geq 0$ and negative ADM mass. We remark that due to the cutoff, the above equation is the usual harmonic equation around the singularity and hence only the operator $\Delta_g$ is (mildly) singular. To obtain contradiction, it suffices to show that $\bar g$ still satisfies the assumption of Theorem~\ref{intro-main}.

By the Harnack inequality \cite[Theorem 8.20]{GilbargTrudinger2001} and $C^{1,\alpha}$ estimate \cite[Theorem 8.32]{GilbargTrudinger2001}, it is clear that $u$ is bounded and $u\in C^{1,\a}_{\mathrm{loc}}(M)\cap C^\infty_{\mathrm{loc}}(M-S)$ for some $\a\in(0,1)$ and hence $\bar g\in W^{1,p}_{\mathrm{loc}}(M)\cap C^\infty_{\mathrm{loc}}(M-S)$.
Thanks to the local boundedness of $u$, we still have $\mathcal{H}_{\bar{g},\mathrm{loc}}^{n-\frac{p}{p-1}}(S)<\infty$ if $n<p<\infty$ or $\mathcal{H}_{\bar{g}}^{n-1}(S)=0$ if $p=\infty$. Then Theorem \ref{intro-main} implies that the mass is nonnegative which is impossible. This proves the scalar flatness away from $S$.

To prove the Ricci flatness outside $S$,  we fix an arbitrary point $p\notin S$ and consider the perturbation $\tilde g=g-t \eta \Ric(g)$ where $\eta$ is a smooth cutoff around $p$ and away from $S$ which still satisfies the same assumptions as $g$.  If $\Ric(g)$ is non-zero at $p$, then for $t$ small we can use the same argument of \cite[Theorem 1.2]{Zhu2022A} with modification as in the argument of Proposition~\ref{Prop: density} to find a positive function $\tilde u$ solving
\begin{equation}
\Delta_{\tilde g}\tilde u-\frac{n-2}{4(n-1)}\big(R(\tilde{g})-\tilde b\tilde\eta\big)\tilde u= 0
\ \ \text{in $M$}.
\end{equation}
with expansion $\tilde u=1+\tilde Ar^{2-n}+O(r^{1-n})$ for some constant $\tilde A<0$, where $\bar\eta$ is a cutoff function supported in $M-S$ such that $\bar\eta \equiv 1$ in the support of $\eta$ and $\tilde b$ is a sufficiently small positive constant. As before, we take $\bar g=\left(\frac{\tilde u+\tau}{1+\tau} \right)^\frac{4}{n-2}\tilde g$ for a sufficiently small positive constant $\tau$ and verify as above that $\bar g$ is a complete metric on $M$ with $R(\bar g)\geq 0$ and negative ADM mass, which leads to a contradiction.
\end{proof}

\subsection{Isometry when $p$ is finte}
In this subsection, we discuss the case of $3\leq n<p<\infty$. Our goal is to show that Ricci flatness outside singularity implies a rigidity on the topological type. 
 Precisely, we will prove the isometry part of Theorem \ref{intro-rigidity} when $p$ is finite. The existence of $C^{1,\alpha}_{\mathrm{loc}}$ diffeomorphism will be proved in the next subsection.

We start with the following regularity result which can be proved by a similar argument as in \cite[Theorem 6.1]{ShiTamPJM}.
\begin{lma}\label{Lem: L p regularity}
Let $B_1$ be the unit ball in the Euclidean space $\mathbb{R}^{n}$ where $n\geq3$. Suppose that $g$ is a metric in $B_1$ and there is a closed subset $S$ with
$\mathcal{H}^{n-\frac{p}{p-1}}(S)<\infty$ such that $g\in W^{1,p}(B_1)\cap C^\infty(B_1-S)$ for some $n<p<\infty$. If $F\in W^{1,p}(B_1)\cap C^\infty(B_1-S)$ is a function satisfying
\begin{equation}
g^{ij}\partial_{i}\partial_{j}F = Q \ \ \text{in $ B_{1}-S$}
\end{equation}
for some $Q\in L^{q}(B_{1})$, where $q$ satisfies
\begin{equation}\label{Eq: condition for q}
\frac{nq}{n-q} > 2.
\end{equation}
Then we have $F\in W_{\mathrm{loc}}^{2,q}(B_1)$.
\end{lma}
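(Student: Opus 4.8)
The plan is to show that $F\in W^{2,q}_{\mathrm{loc}}(B_1)$ by first establishing that $F$ satisfies the elliptic equation $g^{ij}\partial_i\partial_j F = Q$ weakly (in the distributional sense) across the singular set $S$, and then invoking interior $L^q$ elliptic regularity. The crucial point is that the smallness of $S$ — quantified by $\mathcal H^{n-\frac{p}{p-1}}(S)<\infty$, i.e.\ codimension $\tfrac{p}{p-1}>1$ — combined with the $W^{1,p}$ regularity of $g$ and $F$ forces the classical equation valid on $B_1-S$ to extend to a weak equation on all of $B_1$ with no distributional contribution concentrated on $S$. First I would rewrite the equation in divergence form. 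Since $g\in W^{1,p}$ and $p>n$, by Sobolev embedding $g\in C^{0,1-n/p}_{\mathrm{loc}}$, so $\det g$ and $g^{ij}$ are continuous and bounded above and below on any $\overline{B_r}\Subset B_1$; moreover $\partial g^{ij}\in L^p$. Writing $\mathcal L F := \partial_i(\sqrt{\det g}\,g^{ij}\partial_j F)$, we have, on $B_1-S$,
\begin{equation*}
\mathcal L F = \sqrt{\det g}\,g^{ij}\partial_i\partial_j F + \partial_i(\sqrt{\det g}\,g^{ij})\,\partial_j F = \sqrt{\det g}\,Q + \partial_i(\sqrt{\det g}\,g^{ij})\,\partial_j F =: \tilde Q,
\end{equation*}
and one checks $\tilde Q\in L^{q}_{\mathrm{loc}}$ after possibly shrinking $q$: the first term is in $L^q$ and the second lies in $L^p\cdot L^{p^*}$ where $p^*=\tfrac{np}{n-p}=\infty$ since $p>n$, hence in $L^p\subset L^q$ for $q\le p$ (and if $q>p$ we simply replace $q$ by $p$, which only strengthens the conclusion while keeping \eqref{Eq: condition for q}). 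So $\mathcal L F=\tilde Q$ holds classically on $B_1-S$.

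Next I would upgrade this to a weak identity on all of $B_1$: namely, for every $\varphi\in C_c^\infty(B_1)$,
\begin{equation*}
-\int_{B_1}\sqrt{\det g}\,g^{ij}\partial_j F\,\partial_i\varphi\,dx = \int_{B_1}\tilde Q\,\varphi\,dx.
\end{equation*}
This is the heart of the argument and the step I expect to be the main obstacle. The standard device is to insert a logarithmic cutoff $\chi_\delta$ vanishing in a $\delta$-neighborhood of $S$ and equal to $1$ outside a $2\delta$-neighborhood, multiply the classical equation by $\chi_\delta\varphi$, integrate by parts on $B_1-S$ (legitimate since everything is smooth there), and let $\delta\to 0$. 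The error terms involve $\int \sqrt{\det g}\,g^{ij}\partial_j F\,\partial_i\chi_\delta\,\varphi\,dx$, which is bounded by $\|\partial F\|_{L^p}\,\|\nabla\chi_\delta\|_{L^{p'}}$ where $p'=\tfrac{p}{p-1}$. The covering of $S$ at scale $\delta$ by balls whose number is controlled by $\mathcal H^{n-p'}(S)<\infty$ gives $\|\nabla\chi_\delta\|_{L^{p'}}^{p'}\lesssim \delta^{-p'}\cdot\delta^n\cdot\delta^{-(n-p')}=O(1)$ — more precisely, using a Whitney-type cutoff adapted to the Hausdorff content one gets $\|\nabla\chi_\delta\|_{L^{p'}}\to 0$; this is exactly the capacity-zero / removable-singularity mechanism (cf.\ \cite[Theorem 6.1]{ShiTamPJM} and \cite{JiangShengZhang2020}). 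Hence the error vanishes in the limit, and since $\chi_\delta\varphi\to\varphi$ in $W^{1,p'}$ with the main terms having $W^{1,p}$ factors, the dominated convergence theorem yields the weak identity on $B_1$.

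Finally, with the weak equation $\mathcal L F=\tilde Q$ on $B_1$ in hand, $\tilde Q\in L^q_{\mathrm{loc}}$, and the coefficients $\sqrt{\det g}\,g^{ij}$ continuous (indeed $C^{0,\alpha}_{\mathrm{loc}}$) and uniformly elliptic on interior balls, interior $L^q$ estimates for divergence-form elliptic equations with continuous coefficients (e.g.\ \cite[Chapter 9]{GilbargTrudinger2001}) give $F\in W^{2,q}_{\mathrm{loc}}(B_1)$, as claimed. The condition \eqref{Eq: condition for q}, $\tfrac{nq}{n-q}>2$, is what is actually used downstream: it ensures $W^{2,q}\hookrightarrow W^{1,\frac{nq}{n-q}}$ with the target exponent exceeding $2$, which is needed to close subsequent bootstrap arguments; within this lemma it plays no role beyond guaranteeing $q<n$ so the Sobolev exponent is finite (if $q\ge n$ the conclusion is only stronger). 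The one point requiring care is keeping track of which exponent ($q$ versus $p$) controls each term so that $\tilde Q$ genuinely lies in the asserted space; this is routine once the weak formulation is secured.
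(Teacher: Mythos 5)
There is a genuine gap, and it is located exactly where you predicted ``the main obstacle'' would lie, but not for the reason you expected. When you pass to the divergence form $\mathcal L F=\partial_i(\sqrt{\det g}\,g^{ij}\partial_j F)$, the right-hand side acquires the term $\partial_i(\sqrt{\det g}\,g^{ij})\,\partial_j F$. You estimate this as $L^p\cdot L^{p^*}$ with $p^*=\infty$, but the Sobolev embedding $W^{1,p}\hookrightarrow L^\infty$ ($p>n$) applies to $F$, not to $\partial F$: you only know $\partial_j F\in L^p$, so this term is a product of two $L^p$ functions and lies in $L^{p/2}$, not $L^p$. Hence $\tilde Q\in L^{\min(q,p/2)}$, and for $q>p/2$ (which is compatible with \eqref{Eq: condition for q} and with the applications in the paper) your argument at best yields $F\in W^{2,p/2}_{\mathrm{loc}}$, not $W^{2,q}_{\mathrm{loc}}$. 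There is a second, related problem: the $W^{2,q}$ Calder\'on--Zygmund theory in \cite[Ch.~9]{GilbargTrudinger2001} is for \emph{non-divergence} form operators with continuous leading coefficients; for a divergence-form weak solution with merely $C^{0,\alpha}$ coefficients one gets $W^{1,r}$-type (Meyers) or Schauder $C^{1,\alpha}$ information, and to reach $W^{2,q}$ you must pass back to non-divergence form — which re-introduces the very term $(\partial_i a^{ij})\partial_j F\in L^{p/2}$ you cannot control. You also assert \eqref{Eq: condition for q} forces $q<n$, which it does not (it is automatic when $q\ge n$); the case $q\ge n$ needs a separate bootstrap.

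The paper's proof sidesteps both issues by keeping the equation in non-divergence form $g^{ij}\partial_i\partial_j(\eta F)=\tilde Q$, where $\tilde Q=Fg^{ij}\partial_i\partial_j\eta+2g^{ij}(\partial_iF)(\partial_j\eta)+\eta Q$ has no $\partial g\cdot\partial F$ term and genuinely lies in $L^{\min(p,q)}$. Rather than proving a regularity estimate for $F$ directly, they \emph{construct} a solution $G\in W^{2,q}(B_1)\cap W^{1,q}_0(B_1)$ of the same non-divergence equation via \cite[Theorem 9.15]{GilbargTrudinger2001}, verify (using the $\zeta_\e$ cutoffs from \cite[Lemma A.1]{JiangShengZhang2020} — this is the part of your plan that is correct and matches the paper) that both $\eta F$ and $G$ satisfy the same weak divergence-form identity on all of $B_1$, and then prove $H=\eta F-G\equiv0$ by a delicate measure/truncation argument with $\varphi_k=(H-k)^+$ in the spirit of \cite[Theorem 8.1]{GilbargTrudinger2001}. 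It is this existence-plus-uniqueness route, not a direct regularity estimate for $F$, that yields $F\in W^{2,q}_{\mathrm{loc}}$; your direct approach cannot recover the asserted exponent.
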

\begin{proof}
By the Sobolev embedding, $g$ is continuous in $B_1$. We may assume $g\in C(\bar B_1)$ and $\lambda g_{\mathrm{euc}}\leq g\leq \Lambda g_{\mathrm{euc}}$ for some positive constants $\lambda<\Lambda$, otherwise we can work in a smaller ball. We split the argument into two cases:

\medskip
\noindent
{\bf Case 1.} $q<n$.
\medskip

Fix a smooth cutoff function $\eta:B_1\to[0,1]$ supported in $B_1$ such that $\eta\equiv 1$ in $B_{1/2}$, then we can compute
\begin{equation}\label{Eq: equation F}
g^{ij}\partial_i\partial_j(\eta F)=\tilde Q \ \ \text{in $ B_{1}-S$},
\end{equation}
where
\begin{equation}
\tilde Q=Fg^{ij}\partial_i\partial_j\eta +2g^{ij}(\partial_i F)(\partial_j\eta)+\eta Q\in L^q(B_1).
\end{equation}
From \cite[Theorem 9.15]{GilbargTrudinger2001}, we can find a function $G\in W^{2,q}(B_1)\cap W^{1,q}_0(B_1)$ such that
\begin{equation}
g^{ij}\partial_i\partial_j G=\tilde Q\ \ \text{in}\ B_1.
\end{equation}
From the Sobolev embedding, we see that $G\in W^{1,\frac{nq}{n-q}}_0(B_1)$. Using condition \eqref{Eq: condition for q} and $p>n\geq3$,
\begin{equation}
\frac{nq}{n-q} > 2 > \frac{n}{n-1} > \frac{p}{p-1}.
\end{equation}
Therefore for any $\varphi\in C^\infty_0(B_1)$, we have
\begin{equation}\label{Eq: weak eq}
-\int_{B_1}(\partial_ig^{ij})(\partial_jG)\varphi+g^{ij}(\partial_jG)(\partial_i\varphi)\, dx=\int_{B_1}\tilde Q\varphi\,dx.
\end{equation}

Now we claim that \eqref{Eq: weak eq} also holds for $\eta F$.
It follows from \cite[Lemma A.1]{JiangShengZhang2020} that we can take a family of cutoff functions $\zeta_\e:B_1\to [0,1]$ such that $\zeta_\e$ vanishes in a neighborhood of $S$ and $\zeta_\e\equiv 1$ outside $B_\e(S)$. Moreover, we have
\begin{equation}\label{Eq: cutoff function}
\lim_{\e\to 0}\int_{B_1}|\de\zeta_\e|^\frac{p}{p-1} \, dx =0  \ \text{and} \ \lim_{\e\to 0}\mathcal H^n(\{\zeta_\e\neq 1\})=0.
\end{equation}
We multiply both sides of \eqref{Eq: equation F} by $\zeta_\e\varphi$ and it follows from integration by parts that
\begin{equation*}
-\int_{B_1}\partial_ig^{ij}\cdot\partial_j(\eta F)\cdot\zeta_\e\cdot\varphi+g^{ij}\cdot\partial_j(\eta F)
\cdot(\partial_i\varphi \cdot\zeta_\e+\varphi\cdot\partial_i\zeta_\e)\, dx
=\int_{B_1}\tilde Q\zeta_\e\varphi\, dx.
\end{equation*}
It is easy to check
\begin{equation*}
\left|\int_{B_1}\partial_ig^{ij}\cdot\partial_j(\eta F)\cdot(\zeta_\e-1)\cdot\varphi\, dx\right|\leq C\|\partial g\|_{L^p(B_1)}\cdot\|F\|_{W^{1,p}(B_1)}\cdot\|\zeta_\e-1\|_{L^{\frac{p}{p-2}}(B_1)},
\end{equation*}
\begin{equation}
\left|\int_{B_1}g^{ij}\cdot\partial_j(\eta F)\cdot\partial_i\varphi\cdot(\zeta_\e-1)\, dx\right|
\leq C\|F\|_{W^{1,p}(B_1)}\cdot\|\zeta_\e-1\|_{L^{\frac{p}{p-1}}(B_1)}
\end{equation}
and
\begin{equation}
\left|\int_{B_1}g^{ij}\cdot\partial_j(\eta F)\cdot\varphi\cdot\partial_i\zeta_\e\, dx\right|
\leq C\|F\|_{W^{1,p}(B_1)}\cdot\|\partial \zeta_\e\|_{L^{\frac{p}{p-1}}(B_1)}.
\end{equation}
Using \eqref{Eq: cutoff function}, the claim is now verified by letting $\e\to 0$.

Denote $H=\eta F-G$. Then \eqref{Eq: weak eq} and the claim show that for any $\varphi\in C^\infty_0(B_1)$,
\begin{equation}\label{Eq: equation H}
\int_{B_1}(\partial_ig^{ij})(\partial_jH)\varphi+g^{ij}(\partial_jH)(\partial_i\varphi)\, dx=0.
\end{equation}
Since $F\in W^{1,p}(B_{1})$ and $G\in W^{1,\frac{nq}{n-q}}_0(B_1)$, we see that $H\in W^{1,q^*}_0(B_1)$, where $q^{*}=\min(p,\frac{nq}{n-q})$. Using condition \eqref{Eq: condition for q}, $g\in W^{1,p}(B_1)$ for $p>n$ and approximation it is standard to see \eqref{Eq: equation H} also holds for any $\varphi\in W^{1,q^*}_0(B_1)$. Next we repeat the argument as in \cite[Theorem 8.1]{GilbargTrudinger2001} to show $H\equiv 0$, which yields that $F=G$ in $B_{1/2}$ and so $F\in W^{2,q}(B_{1/2})$. Then $F\in W^{2,q}_{\mathrm{loc}}(B_{1})$ follows from a covering argument.

We claim that $H\leq 0$. Otherwise for any $0\leq k<\sup_{B_1}H$, we take $\varphi_k=(H-k)^+$ to be the nonnegative part of $(H-k)$. It is clear that $\varphi_k\in W^{1,q^*}_0(B_1)$ and \eqref{Eq: equation H} implies
\begin{equation}
\begin{split}
\int_{B_1}|\de\varphi_k|^2\, dx
\leq {} & C\int_{B_1}|\partial_ig^{ij}|\cdot|\partial_jH|\cdot\varphi_{k}\, dx\\[1mm]
\leq {} & C\|\partial g\|_{L^p(B_1)}\cdot\|(\partial H)\cdot\varphi_k\|_{L^{\frac{p}{p-1}}(B_1)}\\[1.5mm]
\leq {} & C\|\partial g\|_{L^p(B_1)}\cdot\|\de\varphi_k\|_{L^2(B_1)}\cdot\|\varphi_k\|_{L^{\frac{2p}{p-2}}(\Gamma_{k})},
\end{split}
\end{equation}
where $\Gamma_{k}$ is the support of $\de\varphi_k$. Thanks to $g\in W^{1,p}(B_1)$ for $p>n$, then we have
\begin{equation}
\|\de\varphi_k\|_{L^2(B_1)} \leq C\|\varphi_k\|_{L^{\frac{2p}{p-2}}(\Gamma_{k})}.
\end{equation}
From condition \eqref{Eq: condition for q} and $\vp_{k}\in W^{1,q^*}_0(B_1)$ where $q^{*}=\min(p,\frac{nq}{n-q})$, we see that $\varphi_k\in W^{1,2}_0(B_1)$. Combining the above with the Sobolev inequality,
\begin{equation*}
\|\varphi_k\|_{L^{\frac{2n}{n-2}}(B_1)} \leq C\|\de\varphi_k\|_{L^2(B_1)}
\leq C\|\varphi_k\|_{L^{\frac{2p}{p-2}}(\Gamma_{k})}
\leq C\left(\mathcal H^n(\Gamma_{k})\right)^{\frac{\alpha-1}{\alpha}\cdot\frac{p-2}{2p}}\|\varphi_k\|_{L^{\frac{2n}{n-2}}(B_1)},
\end{equation*}
where $\alpha=\frac{n(p-2)}{p(n-2)}$. Hence, we have $\mathcal H^n(\Gamma_{k})\geq c_0$ for some positive constant $c_0$ independent of $k$. In particular, from $H\geq k$ in $\Gamma_{k}$, we see
\begin{equation}
\mathcal H^n(\{H\geq k\})\geq c_0.
\end{equation}
Since $H$ is $L^{q^*}$-integrable, we know that $\sup_{B_{1}}H$ is finite. After letting $k$ tend to $\sup_{B_{1}}H$, we can find a subset $\tilde \Gamma$ with positive measure where we have $H=\sup_{B_{1}}H$ and $\de H\neq 0$. This is impossible and so the claim is proved. By the same argument, we obtain $H\geq0$ and then $H\equiv0$, as desired.

\medskip
\noindent
{\bf Case 2.} $q\geq n$.
\medskip

For all $s<n$, it is clear that $Q\in L^{q}(B_{1})\in L^{s}(B_{1})$. By Case 1, we see that $F\in W^{2,s}_{\mathrm{loc}}(B_{1})$, which implies $F\in W^{1,p'}_{\mathrm{loc}}(B_{1})$ for all $p'$. We assume without loss of generality that $F\in W^{1,p'}(B_{1})$, otherwise we can work in a smaller ball. By the same construction of Case 1, we obtain $\tilde{Q}\in L^{q}(B_{1})$. Since $q\geq n$, then $G\in W^{2,q}(B_{1})\cap W_{0}^{1,q}(B_{1})\subset W_{0}^{1,q'}(B_{1})$ for all $q'$. It follows that $H=\eta F-G\in W_{0}^{1,q^{*}}(B_{1})$ for all $q^*$. The same argument shows $H\equiv0$ and then $F\in W^{2,q}_{\mathrm{loc}}(B_{1})$.
\end{proof}

Now we are in a position to prove the isometry part of Theorem~\ref{intro-rigidity} when $p$ is finite.

\begin{thm}\label{thm:one-end-1}
Under the assumption of Theorem~\ref{intro-rigidity}, if $n<p<\infty$, then $S$ is bounded and $(M,g)$ is isometric to the standard Euclidean space as a metric space. Moreover, $g$ is flat outside $S$.
\end{thm}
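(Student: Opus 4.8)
Since $m(M,g,\mathcal E)=0$, Proposition~\ref{Prop:ricci-flat} already gives $\Ric(g)\equiv0$, and hence $R(g)\equiv0$, on $M-S$. The plan is to build, out of the Euclidean coordinates at infinity of $\mathcal E$, a \emph{global parallel orthonormal frame} on $M-S$: this simultaneously yields the flatness of $g$ on $M-S$ and, after a topological/metric argument, the isometry with $\mathbb R^n$ and the boundedness of $S$. This follows the arbitrary-ends strategy of \cite[Theorem~1.2]{Zhu2022A}, combined with the cut-off technique of \cite{JiangShengZhang2020} and the interior regularity of Lemma~\ref{Lem: L p regularity} to handle the passage across the (possibly non-compact) set $S$.

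For each $i=1,\dots,n$ I would first produce on $M$ a weak solution $u^i$ of $\Delta_g u^i=0$ with $u^i=x^i+O(r^{2-n})$, and the corresponding decay of $\partial u^i-\partial x^i$, on $\mathcal E$, by the exhaustion construction used in the proof of Proposition~\ref{Prop: density} (cf.\ \cite[Proposition~2.2]{Zhu2022A}); the a priori estimates near infinity use only the local Sobolev inequality on $\mathcal E$ coming from asymptotic flatness, and the same construction yields the global bound $\sum_i|\nabla^2 u^i|\in L^2(M)$. Interior elliptic theory gives $u^i\in C^{1,\a}_{\mathrm{loc}}(M)\cap C^\infty_{\mathrm{loc}}(M-S)$, and writing $\Delta_g u^i=0$ as $g^{jk}\partial_j\partial_k u^i=Q^i$ with $Q^i\in L^p_{\mathrm{loc}}(M)$ (using $\partial g\in L^p_{\mathrm{loc}}$ and $\partial u^i\in L^\infty_{\mathrm{loc}}$) places us in the hypotheses of Lemma~\ref{Lem: L p regularity}, so in fact $u^i\in W^{2,p}_{\mathrm{loc}}(M)$; this is what makes the integrations by parts below legitimate across $S$. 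Then I would run the Bochner/Schoen--Yau integral identity for the $u^i$ (as in \cite{SchoenYau1979C}), using the cut-offs $\zeta_\e$ of \cite[Lemma~A.1]{JiangShengZhang2020} with $\int_M|\partial\zeta_\e|^{p/(p-1)}\to0$, exactly as in the proof of Lemma~\ref{Lem: L p regularity}, to integrate across $S$, together with a standard exhaustion cut-off to integrate by parts at infinity. This produces
\[
\int_M\sum_i\Big(|\nabla^2 u^i|^2+\Ric(\nabla u^i,\nabla u^i)\Big)\,d\mu_g\ \le\ C_n\,m(M,g,\mathcal E)=0 ,
\]
the boundary term at infinity of $\mathcal E$ being a dimensional multiple of the ADM mass and the cut-off errors along the other ends of $M$ tending to $0$ because $\sum_i|\nabla^2 u^i|\in L^2(M)$. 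Since $\Ric(g)\equiv0$ on $M-S$ this forces $\nabla^2 u^i\equiv0$ on $M-S$ for every $i$.

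Because $n-\tfrac{p}{p-1}<n-1$, the set $S$ has $\mathcal H^{n-1}$-measure zero, so it neither disconnects $M$ nor affects the length metric; in particular $M-S$ is connected. On $M-S$ the fields $\nabla u^i$ are then parallel, so $\langle\nabla u^i,\nabla u^j\rangle$ is locally constant and equals $\delta^{ij}$ by the asymptotics on $\mathcal E$; hence $\{\nabla u^i\}$ is a global parallel orthonormal frame, $g$ is flat on $M-S$, and $\Phi:=(u^1,\dots,u^n)\colon M\to\mathbb R^n$ restricts to a local isometry on $M-S$ and is $1$-Lipschitz on $M$. Once one knows $\Phi$ is proper, $\Phi(M)$ is closed, $\Phi(M-S)$ is open, and $\Phi(S)$ is closed with $\mathcal H^{n-1}(\Phi(S))=0$; as $\mathbb R^n\setminus\Phi(S)$ is connected and is the disjoint union of the open sets $\Phi(M-S)$ and $\mathbb R^n\setminus\Phi(M)$, the latter is empty, so $\Phi\colon M-S\to\mathbb R^n\setminus\Phi(S)$ is a proper local isometry onto a simply connected base, hence a Riemannian isometry. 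Removing the $\mathcal H^{n-1}$-null sets $S$ and $\Phi(S)$ changes neither the induced length metrics nor completeness, so passing to metric completions gives a bijective isometry $\Phi\colon(M,d_g)\to(\mathbb R^n,d_{\mathrm{euc}})$; in particular $M$ has only the end $\mathcal E$, and since $\Phi$ is $C^1$-close to the identity near infinity of $\mathcal E$ its image there contains a neighbourhood of infinity, so $\Phi(S)$, and hence $S$, is bounded. The flatness of $g$ on $M-S$ was already obtained above.

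I expect the hard part to be the two places where the arbitrary ends of $M$ intervene: showing the cut-off and boundary terms along them in the Schoen--Yau identity vanish, and showing that $\Phi$ is proper. Both rest on obtaining strong enough a priori control on the $u^i$ across a manifold with no geometric hypotheses on its non-distinguished ends, which is precisely the technical content one must import (and mildly adapt in the presence of $S$) from \cite[Theorem~1.2]{Zhu2022A}; by contrast the passage across the non-compact singular set $S$ is the routine use of the \cite{JiangShengZhang2020} cut-offs already carried out in Lemma~\ref{Lem: L p regularity}.
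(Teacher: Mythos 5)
Your approach is genuinely different from the paper's, and unfortunately it has gaps precisely at the places you flag as hard, which are not in fact handled in the reference you lean on. The paper proceeds entirely locally: near each $z\in S$ it constructs $p$-harmonic coordinates (via \cite{JulinLiimatainenSalo2017}), upgrades their regularity with Lemma~\ref{Lem: L p regularity}, bootstraps the equation $h^{cd}\partial_c\partial_d h_{ab}=Q(h,\partial h)$ coming from $\Ric\equiv 0$ to conclude that $g$ is smooth in the new atlas, glues these charts with \cite{Taylor2006} to obtain a new smooth structure in which $g$ is a genuine smooth Ricci-flat complete metric, and then invokes Bishop--Gromov: the monotone quantity $\Vol(B_R(z_0))/|B_{\mathrm{euc}}(R)|\leq 1$ has limit $\geq 1$ by asymptotic flatness of $\mathcal E$, forcing equality and hence isometry with $\mathbb R^n$. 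This makes no global PDE construction and so never has to see the arbitrary ends; the fact that $M$ has a single end and $S$ is bounded falls out afterwards.

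Your route instead attempts a Schoen--Yau argument with global harmonic coordinates. Three steps are not justified and, as far as I can see, cannot be imported from \cite[Theorem~1.2]{Zhu2022A} (which, once Ricci flatness is known, also goes through volume comparison rather than a Bochner identity):
\begin{enumerate}
\item \emph{Existence and a priori control of the $u^i$.} The exhaustion argument of \cite[Proposition~2.2]{Zhu2022A} solves an equation with compactly supported source, so the needed a priori bound $\|u-1\|_{L^{2n/(n-2)}}$ hinges only on the local Sobolev inequality where the source lives. Harmonic coordinate functions are a different matter: one has to solve $\Delta_g u^i=0$ on all of $M$ with prescribed linear growth on $\mathcal E$, and the exhaustion limits require a priori bounds on the exhausting domains whose boundary components may lie deep in the arbitrary ends, where you have no geometric control (no Sobolev inequality, no volume bounds, no injectivity radius). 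It is not established that the limit exists, nor that it has the asserted decay on $\mathcal E$, nor that $\nabla^2 u^i\in L^2(M)$.
\item \emph{Boundary terms in the Bochner identity.} Integrating $\Delta|\nabla u^i|^2=2|\nabla^2 u^i|^2+2\Ric(\nabla u^i,\nabla u^i)$ over an exhaustion produces boundary fluxes on \emph{all} ends, not only $\mathcal E$. On $\mathcal E$ you can argue the flux converges to a dimensional multiple of the mass, but on the arbitrary ends you have no asymptotics for $u^i$ and no reason for the flux to vanish. Declaring these errors tend to $0$ ``because $\sum_i|\nabla^2 u^i|\in L^2(M)$'' is circular: that $L^2$ bound would normally be an \emph{output} of the Bochner identity, and in any case an $L^2$ bound on $|\nabla^2 u^i|$ by itself does not force the flux of $\partial_\nu|\nabla u^i|^2$ to vanish along a sequence of cut-off spheres in an end of unknown geometry.
\item \emph{Properness of $\Phi$.} You say ``once one knows $\Phi$ is proper,'' but properness is precisely the kind of global statement the arbitrary ends obstruct; nothing in the construction gives control on $|u^i|$ deep inside the other ends, and a $1$-Lipschitz local isometry on $M-S$ need not be proper.
\end{enumerate}
A further, more minor, point: you also need $g$ (or at least $|\nabla u^i|$) to be continuous across $S$ to conclude $\langle\nabla u^i,\nabla u^j\rangle=\delta^{ij}$ by continuation; this is plausible from Lemma~\ref{Lem: L p regularity} but is part of the regularity bootstrap you would still have to run, just as the paper does.

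In short, the strategic decision the paper makes is to convert a global question into a local one (regularity across $S$ + Bishop--Gromov), exactly to sidestep the arbitrary ends. Your proposal inverts this and attempts a global argument, and the missing pieces are exactly where the arbitrary ends enter; I would not accept them as importable.
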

\begin{proof}
For any $z\in S$, we fix a coordinate system $(U_{z},\{x^{i}\}_{i=1}^{n})$ near $z$. By \cite[Theorem 1.1]{JulinLiimatainenSalo2017}, there exists another coordinate system $(V_{z},\{y^{a}\}_{a=1}^{n})$ near $z$ such that $V_{z}\subset U_{z}$ and each $y^{a}$ is a harmonic function with $C^{1}$ regularity. Thanks to Lemma \ref{Lem: L p regularity}, the $C^{1}$ regularity can be lifted to $W^{2,p}$ regularity. Write
\begin{equation}
g_{ij} = g\left(\frac{\de}{\de x^{i}},\frac{\de}{\de x^{j}}\right), \ \
h_{ab} = g\left(\frac{\de}{\de y^{a}},\frac{\de}{\de y^{b}}\right).
\end{equation}
Then
\begin{equation}
h_{ab} = \frac{\de x^{i}}{\de y^{a}}\,\frac{\de x^{j}}{\de y^{b}}\,g_{ij}, \ \ \
\frac{\de h_{ab}}{\de y^{c}} = 2\,\frac{\de^{2}x^{i}}{\de y^{a}\de y^{c}}\,\frac{\de x^{j}}{\de y^{b}}\, g_{ij}
+\frac{\de x^{i}}{\de y^{a}}\,\frac{\de x^{j}}{\de y^{b}}\,\frac{\de x^{k}}{\de y^{c}}\,\frac{\de g_{ij}}{\de x^{k}}.
\end{equation}
Combining this with the assumption $g\in W^{1,p}(U_{z})$ and $W^{2,p}$ regularity of $y^{a}$, we see that $h\in W^{1,p}(V_{z})$.

On the other hand, Proposition \ref{Prop:ricci-flat} shows $\Ric\equiv0$ in $M-S$. Since $y^{a}$ is harmonic, we have
\begin{equation}
h^{cd}\frac{\de^{2} h_{ab}}{\de y^{c}\de y^{d}} = Q(h,\de_{y}h) \ \ \text{in $V_{z}-S$},
\end{equation}
where
\begin{equation}
Q(h,\de_{y}h) = \de_{y}h^{-1}*\de_{y}h+h^{-1}*h^{-1}*\de_{y}h*\de_{y}h.
\end{equation}

We claim that for $p\leq p'<2n$, if $h\in W_{\mathrm{loc}}^{1,p'}(V_{z})$, then $h\in W_{\mathrm{loc}}^{1,\theta p'}(V_{z})$ where $\theta:=\frac{2n}{3n-p}>1$. It is clear that $Q\in L_{\mathrm{loc}}^{\frac{p'}{2}}(V_{z})$ and Lemma \ref{Lem: L p regularity} shows $h\in W_{\mathrm{loc}}^{2,\frac{p'}{2}}(V_{z})$. By the Sobolev embedding, $h\in W_{\mathrm{loc}}^{1,\frac{np'}{2n-p'}}(V_{z})$. It is clear that
\begin{equation}
\frac{np'}{2n-p'} \geq \frac{np'}{2n-\frac{p+n}{2}} = \theta p'.
\end{equation}
and then $h\in W_{\mathrm{loc}}^{1,\theta p'}(V_{z})$.

Combining the above claim and assumption $h\in W^{1,p}(V_{z})$, we obtain $h\in W_{\mathrm{loc}}^{1,\theta p'}(V_{z})$ for all $p\leq p'<2n$ and then $Q\in L_{\mathrm{loc}}^{\frac{\theta p'}{2}}(V_{z})$. By Lemma \ref{Lem: L p regularity}, $h\in W_{\mathrm{loc}}^{2,\frac{\theta p'}{2}}(V_{z})$. Choosing $p'$ such that $p'>\frac{2n}{\theta}$, the Sobolev embedding shows $h\in C_{\mathrm{loc}}^{1}(V_{z})$ and so $Q\in L_{\mathrm{loc}}^{\infty}(V_{z})$. Using Lemma \ref{Lem: L p regularity} again, we see that $h\in W_{\mathrm{loc}}^{2,q}(V_{z})$ for all $q$. Combining this \cite[Theorem 9.19]{GilbargTrudinger2001}, we obtain $h\in W_{\mathrm{loc}}^{3,q}(V_{z})$. Repeating such argument, we obtain $h$ is actually smooth.

Covering $S$ by the above harmonic coordinate system $V_{z}$ and using \cite[Theorem 2.1]{Taylor2006}, we obtain a new smooth structure. Under this new smooth structure, the metric $g$ is smooth and $\Ric\equiv0$ in $M$. Fix a point $z_{0}\in M$. By the volume comparison theorem,
\begin{equation}
\Vol(B_{R}(z_{0}))\leq |B_{\mathrm{euc}}(R)| \ \ \text{and} \ \
\frac{\Vol(B_{R}(z_{0}))}{|B_{\mathrm{euc}}(R)|} \ \ \text{is non-increasing},
\end{equation}
where $|B_{\mathrm{euc}}(R)|$ denotes the volume of ball with radius $R$ in $\mathbb{R}^{n}$.
Since $\mathcal{E}$ is asymptotically flat, then
\begin{equation}
\lim_{R\to+\infty}\frac{\Vol(B_{R}(z_{0}))}{|B_{\mathrm{euc}}(R)|} \geq 1.
\end{equation}
Combining the above, we obtain $\Vol(B_{R}(z_{0}))=|B_{\mathrm{euc}}(R)|$ for any $R>0$, which implies that $M$ with the new smooth structure is isometric to the Euclidean space. It follows that $M$ has only one end $\mathcal{E}$. Then $S\cap\mathcal{E}=\emptyset$ shows $S$ is bounded. Since the smooth structure outside $S$ coincides with the original one, we conclude that $g$ is flat outside $S$. Moreover, the distance isometry follows from \cite[Theorem 1.1]{JiangShengZhang2020}.
\end{proof}

\subsection{Proof of Theorem \ref{intro-rigidity}}
In the previous subsection, we prove the isometry part of Theorem \ref{intro-rigidity} when $p$ is finite. For the existence of $C^{1,\alpha}_{\mathrm{loc}}$ diffeomorphism,  the proof of Theorem~\ref{thm:one-end-1} indeed infer that at each point on the singularity $S$, there is a $W^{2,p}$ coordinate system (with respect to the original smooth structure) such that $g$ is flat. By the Sobolev embedding, such coordinate system is $C^{1,\alpha}$. From these, we expect that one should be able to construct the global $C_{\mathrm{loc}}^{1,\a}$ diffeomorphism using the method of Cheeger \cite{Cheeger1970}. In this subsection, we will make use of the Ricci flow approach to construct a $C^{1,\alpha}_{\mathrm{loc}}$ diffeomorphism when $p$ is finite and Theorem \ref{intro-rigidity} when $p=\infty$.

\medskip

Let $h$ be a smooth metric on $M$. A family of metrics $g(t)$ is said to be a Ricci-Deturck flow with background metric $h$ starting from $g_0$ if it satisfies
\begin{equation}
    \left\{
    \begin{array}{ll}
       \partial_t g_{ij}  =-2R_{ij}+\nabla_i W_j +\nabla_j W_i;\\[1.5mm]
       W^k= g^{pq}(\Gamma_{pq}^k-\tilde \Gamma_{pq}^k); \\[1mm]
         g(0)=g_0,
    \end{array}
    \right.
\end{equation}
where $\tilde \nabla$ denotes the connection of $h$. Furthermore, let $\Psi_t$ be the solution to
\begin{equation}\label{ODE-RF}
    \left\{
    \begin{array}{ll}
         \partial_t\Phi_t(x)=-W(\Phi_t(x),t);  \\[1mm]
        \Phi_0(x) =x.
    \end{array}
    \right.
\end{equation}
Then $\hat g(t)=\Phi_t^*g(t)$ is a Ricci flow with $\hat g(0)=g_0$. It is very often to choose the reference metric $h$ so that it is of bounded geometry of infinity order on $M$ in the sense of the following
\begin{defn}\label{Defn: 4.1}
We say that $h$ has bounded geometry of  order $k$ if $\mathrm{inj}(M,h)>0$ and for all $0\leq m\leq k$, there is $C_m>0$ such that $\sup_M |\nabla^m \Rm(h)|\leq C_m$.
\end{defn}
We remark here that by the work of Shi \cite{Shi1989}, any metric with bounded geometry of $0$ order can be perturbed slightly in $C^1$ to another metric with bounded geometry of infinity order.

\begin{defn}\label{Defn: 4.2}
A metric $g$ is said to be $L^\infty(M)\cap W^{1,q}(M)$ with respect to $h$ if $|\nabla^h g|\in L^q(M,h)$ and $\Lambda^{-1}g\leq h\leq \Lambda h$ for some $\Lambda>0$ almost everywhere on $M$. 
\end{defn}

\begin{proof}[Proof of Theorem~\ref{intro-rigidity}]

To avoid confusion when using the flow, we will use $g_0$ instead of $g$ to denote the given metric.

When $n<p<\infty$. If $m(M,g,\mathcal{E})=0$, thanks to Theorem~\ref{thm:one-end-1}, $M$ is one-ended and hence we can find $h$ so that $h$ is smooth, coincides with the Euclidean metric on $\mathcal{E}$ and $g_0$ is $L^\infty(M)\cap W^{1,p}(M)$ with respect to $h$. If $p=\infty$, this follows from the assumption as we might replace $h$ by the Euclidean at $\mathcal{E}$ via a cutoff function. So in either case, we might assume $g_0$ to be $L^\infty(M)\cap W^{1,p}(M)$ with respect to some $h$ on $M$ and $g_0$ is $\e_n$ close to $h$ in $C^0$ where $h$ is of bounded geometry of infinity order for any given $\e_n>0$.

\begin{claim}\label{RFe-claim}
We can construct a Ricci-Deturck flow $g(t),t\in (0,T]$ with respect to metric $h$ for some $T>0$ such that it satisfies
\begin{enumerate}\setlength{\itemsep}{1mm}
\item[(a)] $g(t)\to g_0$ as $t\to 0$ in $C^0_{\mathrm{loc}}(M) \cap C^\infty_{\mathrm{loc}}(M-S)$;
\item[(b)]  there exists $C_0>0$ and $Q=\frac12 (1+n/p)\in (0,1)$ such that 
\begin{equation*}
t^{-\frac{1}{2}}|\tilde \nabla g(t)| +|\tilde \nabla^2 g(t)| \leq \frac{C}{t^Q};
\end{equation*}
\item[(c)] there exists $C_1>0$ such that for all $x\in M$ and $t\in (0,T]$,
\begin{equation*}
    ||\tilde \nabla g(t)||_{L^p(B_g(x,1))} \leq C_1;
\end{equation*}
\item[(d)] $g(t)$ is $2\e_n$ close to $h$ in $C^0$ for $t\in (0,T]$;
\item[(e)] $g(t)$ is asymptotically flat at $\mathcal{E}$.
\end{enumerate}
Here $\tilde \nabla$ denotes the connection of $h$. If $p=\infty$, $Q$ is understood to be $1/2$.
\end{claim}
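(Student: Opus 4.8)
The plan is to obtain $g(t)$ as a limit of Ricci--DeTurck flows issued from smooth local approximations of $g_0$. First I would produce a sequence of smooth metrics $g_{0,j}$ on $M$ with the following properties: $g_{0,j}\to g_0$ in $C^0_{\mathrm{loc}}(M)\cap C^\infty_{\mathrm{loc}}(M-S)$ as $j\to\infty$; $g_{0,j}$ agrees with $g_{\mathrm{euc}}$ (hence with $h$) outside a fixed compact subset of $\mathcal{E}$; there is $C_1$, independent of $j$, with $\|\tilde\nabla g_{0,j}\|_{L^p(B_h(x,2))}\le C_1$ for every $x\in M$; and $g_{0,j}$ is $\tfrac32\e_n$-close to $h$ in $C^0$. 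Such a sequence is produced by mollification away from $S$ combined with the cutoff functions near $S$ supplied by \cite[Lemma 2.6, Remark 2.3 and Lemma 2.7]{JiangShengZhang2020} (exactly as in the proof of Proposition \ref{Prop: smoothing}); the $\mathcal H^{n-\frac{p}{p-1}}$-smallness of $S$ is precisely what guarantees that the localization does not inflate the $W^{1,p}$-norm, even though the pointwise curvature of $g_{0,j}$ necessarily diverges near $S$.

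Next, for each $j$ I would run the Ricci--DeTurck flow $g_j(t)$ with background metric $h$ and initial datum $g_{0,j}$. Since $h$ has bounded geometry of infinite order and $g_{0,j}$ is uniformly $C^0$-close to $h$, a fixed-point argument in a weighted parabolic space (in the spirit of the constructions of Simon and of Koch--Lamm, or of Shi \cite{Shi1989} once the initial data are smoothed) yields a uniform existence time $T>0$ together with the closeness estimate that $g_j(t)$ is $2\e_n$-close to $h$ on $[0,T]$, which is property (d). Writing $v=g-h$, the flow reads schematically $\partial_t v = g^{kl}\tilde\nabla_k\tilde\nabla_l v + \tilde\nabla g * \tilde\nabla g + \Rm(h)*v$, a system that is uniformly parabolic on $[0,T]$ by (d). Starting from $\tilde\nabla g_0\in L^p$ with $p>n$ --- a subcritical class --- the standard interior $L^p$--$L^\infty$ parabolic smoothing gives $|\tilde\nabla g_j(t)|\le C t^{-\frac{n}{2p}}$ and $|\tilde\nabla^2 g_j(t)|\le C t^{-\frac12-\frac{n}{2p}}$, i.e.\ property (b) with $Q=\tfrac12(1+n/p)$, where $C$ depends only on $\e_n$, the geometry of $h$, and the uniform local bound $C_1$. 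Property (c) is then a localized energy estimate: differentiating $\int_{B_{g_j}(x,1)}|\tilde\nabla g_j|^p\,d\mu_{g_j}$ along the flow against a fixed spatial cutoff, integrating by parts, and absorbing the nonlinear error terms using (b) on a slightly larger ball, one bounds $\|\tilde\nabla g_j(t)\|_{L^p(B_{g_j}(x,1))}$ by $\|\tilde\nabla g_{0,j}\|_{L^p(B_h(x,2))}+C$ for all $t\in(0,T]$.

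Then I would pass to the limit $j\to\infty$. On $M-S$ the initial data converge in $C^\infty_{\mathrm{loc}}$, so interior estimates (with (b) as a priori bounds) and continuous dependence give $g_j(t)\to g(t)$ in $C^\infty_{\mathrm{loc}}((0,T]\times(M-S))$, and this limit satisfies the Ricci--DeTurck equation on $M-S$ with (b), (c), (d) inherited. For (a), the bound (b) forces $|\partial_t g_j|\le C(|\tilde\nabla^2 g_j|+|\tilde\nabla g_j|^2)\le C t^{-Q}$ with $Q<1$, so $t\mapsto g_j(t)$ is uniformly $C^0_{\mathrm{loc}}$-equicontinuous on $[0,T]$ with value $g_{0,j}$ at $t=0$; combining with $g_{0,j}\to g_0$ in $C^0_{\mathrm{loc}}(M)$ and a diagonal argument yields a subsequence with $g_j(t)\to g(t)$ in $C^0_{\mathrm{loc}}(M)$ locally uniformly in $t$ and $g(t)\to g_0$ in $C^0_{\mathrm{loc}}(M)$ as $t\to0$; the $C^\infty_{\mathrm{loc}}(M-S)$ part of (a) at $t\to0$ follows from interior parabolic estimates near compact subsets of $M-S$, where $g_0$ is genuinely smooth. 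Finally, for (e): on $\mathcal{E}$ we have $h=g_{\mathrm{euc}}$ and $g_{0,j}=g_{\mathrm{euc}}$ outside a compact set, so there the system is a small perturbation of the heat equation on $\mathbb{R}^n$; weighted parabolic estimates (or explicit barrier comparison) propagate the decay $|g_0-g_{\mathrm{euc}}|+r|\partial(g_0-g_{\mathrm{euc}})|+r^2|\partial^2(g_0-g_{\mathrm{euc}})|\le Cr^{-\sigma}$ uniformly on $[0,T]$, so $g(t)$ remains asymptotically flat on $\mathcal{E}$ with the same order.

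The main obstacle is making every constant uniform in $j$ near $S$: there one has no pointwise curvature control on $g_{0,j}$, only the $L^p$-bound on $\tilde\nabla g_{0,j}$ over balls, so the short-time existence and all interior estimates must be carried out entirely in the $W^{1,p}$-framework rather than through Shi's curvature estimates. This relies on the subcritical parabolic well-posedness available because $p>n$; the companion point is that $Q=\tfrac12(1+n/p)<1$, which forces $\int_0^t|\partial_t g_j|\,ds=O(t^{1-Q})$ and thereby yields the continuity up to $t=0$ needed in (a). The energy argument for (c) is the second delicate point, since it must be localized with respect to the evolving metric $g_j(t)$ while keeping the right-hand side controlled purely by the initial local $W^{1,p}$-norm.
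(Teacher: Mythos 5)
Your proposal is correct in substance and follows essentially the same route as the paper's cited references: the paper establishes this claim by invoking Simon's existence theory \cite{Simon2002} for $C^0$-close initial data and the $W^{1,p}$-parabolic smoothing estimates from \cite[Sections 3--4, Theorem 7.2]{ShiTamPJM} (plus \cite{McFeronSzekelyhidi2012} for $p=\infty$), and those references implement exactly the scheme you describe --- mollify $g_0$ near $S$ using the cutoffs of \cite{JiangShengZhang2020}, run the Ricci--DeTurck flow from the smooth approximants, derive the uniform $t^{-Q}$ smoothing and local $L^p$-stability estimates from subcritical ($p>n$) parabolic theory, and pass to the limit using the integrability of $t^{-Q}$ on $(0,T]$.

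One small slip worth fixing: since $g_0$ is asymptotically flat but not exactly Euclidean on $\mathcal{E}$, and $S$ is disjoint from $\mathcal{E}$, the approximants $g_{0,j}$ should be taken to agree with $g_0$ (not with $g_{\mathrm{euc}}$) on $\mathcal{E}$; property (e) then comes from the standard propagation of asymptotic flatness along Ricci--DeTurck flow with Euclidean background $h$ rather than from the initial data being literally flat far out. With that adjustment the argument matches what the paper relies on.
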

\begin{proof}[Proof of Claim \ref{RFe-claim}]
The existence follows from \cite{Simon2002} while the improved estimates follows from discussion of \cite[Section 3-4 and Theorem 7.2]{ShiTamPJM}, see also \cite{Simon2002} and \cite{McFeronSzekelyhidi2012} for the discussion in case of $p=\infty$.
\end{proof}

It suffices to show that $g(t)$ is Ricci flat. If this is the case, then $(M,g(t))$ is isometric to the Euclidean space by the volume comparison. Moreover, if we consider the ODE:
\begin{equation}
\left\{
\begin{array}{ll}
\partial_t \Psi_t(x)=  W( \Psi_t(x),t);\\[1mm]
\Psi_T(x)=x,
\end{array}
\right.
\end{equation}
for $(x,t)\in M\times (0,T]$, then the Ricci flatness implies 
\begin{equation}
\partial_t\left[ (\Psi_t^{-1})^*  g(t)\right]=0.
\end{equation}
Hence,  $ g(t)=\Psi_t^*  g(T)$ where $ g(T)$ is isometric to a standard Euclidean metric.  By \cite[(5.2)]{CalabiHartman1970},
\begin{equation}\label{local-DIFF-CalabiHartman}
\frac{\partial^2 \Psi_t^m}{\partial x^i \partial x^j} = \Gamma^k_{ij}(g(t)) \frac{\partial \Psi_t^m}{\partial x^k} -\Gamma( g(T))_{kl}^m \frac{\partial \Psi_t^l}{\partial x^i}\frac{\partial \Psi_t^k}{\partial x^j}
\end{equation}
in local coordinate of $M$.  Thanks to (c) in Claim \ref{RFe-claim} and metrics equivalence,  the right hand side of \eqref{local-DIFF-CalabiHartman} is bounded in $L^p_{\mathrm{loc}}(M)$ for $p>n$ as $t\to 0$ and hence $\Psi_t\in W_{\mathrm{loc}}^{2,p}(M)$ uniformly in $t\to 0$. By the Sobolev embedding,  we may pass $\Psi_t\to \Psi_0$ for $t\to 0$ in $C_{\mathrm{loc}}^{1,\a}(M)$ for some $\a>0$. Moreover, since $g(t)$ is uniformly equivalent to $g$ and
$$d_{g(t)}(x,y)=d_{g(T)}(\Psi_t(x),\Psi_t(y))$$
for all $x,y\in M$ and $t\in (0,T]$. We see that $\Psi_0$ is a bi-Lipschitz map and hence is a diffeomorphism. This will complete the proof when $n<p\leq\infty$ since $ g(t)\to g_0$ in $C^0_{\mathrm{loc}}(M)$ as $t\to 0$.

\medskip

The remaining part of the proof is devoted to prove $\Ric(g(t))\equiv0$. We first consider the case of $p=\infty$. The argument is analogous to that of \cite[Theorem 1.1]{LeeTam2021}. By Proposition~\ref{Prop:ricci-flat}, we know that $\Ric(g_0)=0$ outside $S$. Let $\varphi=|\Ric(g(t))|$.
Fix $x_0\in S$ and $\rho(x)$ be a distance like function on $M$ such that $\rho$ is $2$ bi-Lipschitz to $d_h(x,x_0)$ with $|\tilde\nabla \rho|^2+ |\tilde\nabla^2\rho|\leq C$ obtained by \cite{Tam}. let $\Phi=\phi^m(r^{-1}\rho)$ be a cutoff function on $M$ such that $\Phi=1$ on $B_h(x_0,r)$ and vanishes outside $B_h(x_0,2r)$ for $r$ sufficiently large. It suffices to show that as $r\to +\infty$,
\begin{equation}
E_r(t)=\int_M \varphi \, \Phi \,d\mu_{g(t)}=o(1).
\end{equation}

Recall that $(\partial_t-\Delta_{t}) \Ric=\Ric *\Rm$ along the Ricci flow and hence $(\partial_t-\Delta_{g(t)}) \varphi \leq Ct^{-Q}\varphi+\langle W,\nabla \varphi\rangle$ in the sense of distribution along the Ricci-Deturck flow $g(t)$ where $W$ is defined by \eqref{ODE-RF}. Direct computation using Claim~\ref{RFe-claim} shows that
\begin{equation}
    \begin{split}
        \partial_tE_r
        &\leq Ct^{-Q} E_r+ Ct^{-Q(1+\frac1m)}r^{-1} E_r^{1-\frac1m}.
    \end{split}
\end{equation}
We fix $m$ large enough so that $Q(1+\frac1m)<1$. By integrating from $0<s\ll1$, it suffices to control $\lim_{t\to 0^+} E_r(t)$. We compute as follows: for $t\to 0^+$,
\begin{equation}
E_r(t)\leq \left(\int_{B_h(x_0,2r) \cap S_{\Lambda \sqrt{t}}} +\int_{B_h(x_0,2r) \setminus  S_{\Lambda \sqrt{t}}} \right) \varphi\; d\mu_{g(t)}
= \mathbf{I}+\mathbf{II}.
\end{equation}
We will choose $\Lambda$ sufficiently large such that \cite[Proposition 3.1]{LeeTam2021} applied to conclude for some $l$ sufficiently large (uniformly in $r$ and $t$), we have as $t\to 0$,
\begin{equation}
    \begin{split}
        \mathbf{II}&\leq o_r(1)\cdot \int_{\Lambda\sqrt{t}}^{2r} t^l  r^{-2(l+1)+1}\leq o_r(1) + Cr^{-1}.
    \end{split}
\end{equation}
Here we have used the assumption on $\mathcal{H}^{n-1}(S)=0$ so that for any compact set $\Omega$, $\mathcal{H}^{n-1}(\Omega\cap S_{r'})=o_\Omega(1)\cdot r'$ as $r'\to 0$.

For $\mathbf{I}$, we use the estimate in Claim~\ref{RFe-claim} to see that as $t\to 0$,
\begin{equation}
    \begin{split}
        \mathbf{I}&\leq C\mu_{h}(B_h(x_0,2r)\cap S_{\Lambda\sqrt{t}}) \cdot t^{-Q}=o_r(1).
    \end{split}
\end{equation}

Combining the estimates of $\mathbf{I}$ and $\mathbf{II}$, we deduce that $\lim_{t\to 0} E_r(t)\leq Cr^{-1}$ for some uniform constant $C>0$ and hence $E_\infty=0$ by letting $r\to +\infty$. This completes the proof when $p=\infty$.

If $n<p<\infty$, \cite[Lemma 2.7]{JiangShengZhang2020} implies that $R(g_0)\geq 0$ in the distributional sense. Since Theorem~\ref{thm:one-end-1} implies that $M$ is topologically Euclidean and $S$ is bounded, a slight modification of \cite[Theorem 1.1]{JiangShengZhang2021} implies that $g(t)$ is of $R(g(t))\geq 0$ in the classical sense. We remark here that since $S$ is bounded and $g_0$ is smooth outside $S$ with $R(g_0)\geq 0$, it suffices to consider the distributional scalar lower bound on a compact region nearby $S$ and hence the argument in \cite[Theorem 1]{JiangShengZhang2021} can be carried over easily. Once it is known that $R(g(t))\geq 0$, we can easily deduce that $g(t)$ is flat by the rigidity in the classical positive mass theorem using the argument in \cite{ShiTamPJM,McFeronSzekelyhidi2012}. This proves the case of $p\in (n,+\infty)$ and hence completes the proof. 
\end{proof}

\end{document}